\theoremstyle{plain}
\newtheorem{theorem}{Theorem}[section]
\newtheorem{corollary}[theorem]{Corollary}
\newtheorem{lemma}[theorem]{Lemma}
\newtheorem{proposition}[theorem]{Proposition}
\theoremstyle{definition}
\newtheorem{definition}[theorem]{Definition}
\newtheorem{remark}[theorem]{Remark}
\providecommand{\Z}{{\ensuremath{\mathbb{Z}}}}
\providecommand{\C}{{\ensuremath{\mathbb{C}}}}
\providecommand{\HH}{{\ensuremath{\mathrm{H}}}}
\providecommand{\pr}{{\ensuremath{\mathbb{P}}}}
\providecommand{\calX}{{\ensuremath{\mathcal{X}}}}
\providecommand{\calU}{{\ensuremath{\mathcal{U}}}}
\providecommand{\I}{{\ensuremath{\mathnormal{I}}}}
\providecommand{\pp}{{\ensuremath{\pr^1\times\pr^1}}}
\providecommand{\Hilb}{\ensuremath\mathrm{Hilb}}
\providecommand{\spn}{\ensuremath\mathrm{span}}
\providecommand{\rank}{\ensuremath{\mathrm{rank}}}
\providecommand{\VPS}{\ensuremath{\mathrm{VPS}_{\pp}}}
\providecommand{\VPowerS}{\ensuremath{\mathrm{VPS}}}
\providecommand{\VSumP}{\ensuremath{\mathrm{VSP}}}
\providecommand{\VPSS}{\ensuremath{\mathrm{VPS}_{\Sigma}}}
\providecommand{\VPSC}{\ensuremath{\mathrm{VPS}_{C}}}
\providecommand{\tth}{\thinspace}
\providecommand{\Pic}{\ensuremath{\mathrm{Pic}}}
\providecommand{\SD}{\ensuremath{SD}}
\begin{document}
\begin{center}
\title[Varieties of apolar subschemes of toric surfaces]
{Varieties of apolar subschemes of toric surfaces}
\par \bigskip
\author[M.~Gallet]{Matteo Gallet}
\address{(Matteo Gallet) RICAM, Austrian Academy of Sciences, Altenberger 
Stra{\ss}e 69, A-4040, Linz, Austria.}
\email{matteo.gallet@ricam.oeaw.ac.at}
\author[K.~Ranestad]{Kristian Ranestad}
\address{(Kristian Ranestad) University of Oslo, Department of Mathematics, PO 
Box 1053, Blindern, N-0316 Oslo, Norway}
\email{ranestad@math.uio.no}
\author[N.~Villamizar]{Nelly Villamizar}
\address{(Nelly Villamizar) Swansea University, Department of Mathematics, Singleton Park, SA2 8PP, Swansea, UK.}
\email{n.y.villamizar@swansea.ac.uk}
\keywords{toric surfaces, apolarity, apolar schemes, powersum varieties.}
\subjclass[2000]{14M25, 14J99, 14N99}

\date{\today}
\maketitle
\end{center} 	

\begin{abstract}
Powersum varieties, also called varieties of sums of powers, have provided 
examples of interesting relations between varieties since their first 
appearance in the 19th century. One of the most useful tools to study them 
is apolarity, a notion originally related to the action of differential 
operators on the polynomial ring. In this work we make explicit how one can 
see apolarity in terms of the Cox ring of a variety. In this way powersum 
varieties are a special case of varieties of apolar schemes; we 
explicitly describe examples of such varieties in the case of two toric 
surfaces, when the Cox ring is particularly well-behaved.
\end{abstract}

\section{Introduction}\label{intro}

Powersum varieties, parametrizing expressions of a form as a sum of powers of 
linear polynomials, provide examples of surprising relations between varieties, 
namely between them and the hypersurfaces defined by the forms. These varieties 
have been widely studied since the 19th century: Sylvester considered and solved 
the case of binary forms (see \cites{Sylvester1904a, Sylvester1904b}). A number 
of further cases have been treated more recently, see \cite {IR}, \cite{MM}, 
\cite{Muk92}, \cite{RS1}, \cite{RS2} and~\cite{RV}.

A powersum variety~$\VSumP(f,k)$ is associated to a polynomial 
$f\in\operatorname{Sym}^{d} V$ on a vector space~$V$ over a field $K$, and to a 
positive integer number $k$. It is defined as the Zariski closure in the Hilbert 
scheme of subschemes of~$\pr \left( V \right)$ of length~$k$ of the set
\begin{multline*}
  \Bigl\{ \big[ [l_1], \dotsc, [l_k] \big] \in \Hilb_k \, \pr \left( 
 V \right) \, : \, f = l_1^d + \dotsb 
+ l_k^d, \\  \text{where } [l_i] \in \pr(V) \text{ are pairwise 
distinct} \Bigr\}.
\end{multline*}
Although $K$ in this definition can be an arbitrary field, throughout this paper 
we always consider it to be the field of complex numbers $\C$.

Powersum varieties are special cases of a more general construction:  given a 
projective variety $X\subseteq \pr^n$, let $y\in \pr^n$ be a general point and 
$k$ be the minimal integer such that there are  $k$ points in~$X$ whose 
span contains~$y$. Thus $k$ is the smallest integer such that the $(k-1)$-th 
secant variety of~$X$ fills the space~$\pr^n$. Define $\VPowerS_X(y,k)$ to be 
the closure in $\Hilb_k(X)$ of the set of smooth subschemes of length~$k$ whose 
span contains~$y$. If $X$ is a Veronese variety, then one may 
interpret~$y$ as the class~$[f]$ of a homogeneous form~$f$ of
some degree~$d$. The 
$k$-tuples of points on~$X$ whose span contains $[f]$ represent expressions 
of~$f$ as a sum of $d$-th powers of linear forms, hence in this way we recover 
the notion of powersum variety of~$f$ and~$k$. Therefore, if $X \subseteq \pr 
\bigl( \operatorname{Sym}^{d} V \bigr)$ is the $d$-uple embedding of~$\pr(V)$, 
then $\VSumP(f,k) = \VPowerS_X([f],k)$.

Furthermore, $\VPowerS_X(y,k)$ can be seen as the ``Variety of aPolar 
Subschemes":

\begin{definition}\label{def:apolar}
A subscheme $Z \subseteq X$ is called apolar to $y\in \pr^n$, if $y$ is 
contained in the linear span of $Z$ in  $\pr^n$.
\end{definition}

Note that $\VPowerS_X(y,k)$ contains only those apolar subschemes that are in 
the closure of the set of smooth apolar subschemes. When $\dim X>3$, then the 
general point~$y$ may have singular apolar subschemes of length~$k$ that do not 
belong to this closure.
 
Apolar schemes have been studied, in the classical setting of powersum 
varieties, considering the ideal of differential operators annihilating a given 
homogeneous form. More precisely, one considers two polynomial rings $S = 
\C[x_0, \dotsc, x_n]$ and $T = \C[y_0, \dotsc, y_n]$, and the action of the 
variables $y_i$ on $x_j$ defined by differentiation: $y_i \cdot x_j = \partial 
x_j / \partial x_i$. In this way, for a homogeneous polynomial~$f$ of 
degree~$d$, one defines the set $H_{f} \subseteq T$ of differential operators of 
degree~$d$ annihilating~$f$. Then $H_{f}$ is a hyperplane in the vector space 
$\C[y_0, \dotsc, y_n]_d$ and a scheme $Z$ is apolar to~$[f]$ if and only if 
$I_{Z,d} \subseteq H_f$. 

In this paper we generalize the notion of apolarity and investigate 
$\VPowerS_{X}(y,k)$ using the Cox ring~$\operatorname{Cox}(X)$ of~$X$, namely 
the $\C$-algebra of sections of all line bundles on~$X$, graded by the Picard 
group~$\Pic(X)$. If $X$ is a toric variety, $\operatorname{Cox}(X)$ has a simple 
structure, namely it is a polynomial ring (see~\cite{Cox}): we use this fact to 
explore $\VPowerS_X$ in particular cases of toric surfaces.

Let $T = \bigoplus_{A\in\Pic(X)} T_A$ be $\operatorname{Cox}(X)$, 
and for each $A \in \Pic(X)$ let $S_A$ be the space of linear forms 
on~$T_A$, i.e.\ $S_A = T_A^{*}$. If $A$ is very ample on $X$, then its global 
sections~$T_A$ define the embedding $\nu_A \colon X \hookrightarrow \pr(S_A)$. 
To every subscheme $Z \subseteq X$ we associate
an ideal $I_Z \subseteq T$:
\[
  I_Z := \bigoplus_{B \in \Pic(X)} I_{Z,B} \subseteq T ;\qquad I_{Z,B}:=\{g\in 
T_B \, \colon \, g|_{Z}\equiv 0\}.
\]
Definition~\ref{def:apolar} may be generalized as follows:

\begin{definition}
\label{def:hyperplane}
For any  nonzero $f \in S_A$ let $H_f \subseteq T_A$ be the hyperplane of 
sections that vanish at the point $[f]\in \pr(S_A)$.
A subscheme $Z \subseteq X$ is called apolar to $f\in S_A$ if 
$I_{Z,A} \subseteq H_f$.
\end{definition}

One of the facts that makes the classical theory of apolarity such a powerful 
tool is that it makes possible to translate the previous condition of 
containment of vector spaces into a condition involving ideals. This fact can be 
generalized as follows. For each $B \in \Pic(X)$ define
\[
I_{f,B}=
\begin{cases}
 H_f:T_{A-B}=\{g\in T_{B} \, \colon \, g\cdot T_{A-B}\subseteq H_f\}, 
\quad\text{if } A-B>0\\
 T_B, \quad \text{otherwise},
\end{cases}
\]
where $A-B>0$ if the line bundle $A-B$ has global sections,
and set
\begin{equation}
\label{eq:def_orthogonal}
  I_f=\bigoplus_{B \in \Pic(X)} I_{f,B} \subseteq T.
\end{equation}
The classical apolarity lemma (see~\cite{IK}*{Lemma~1.15}) can be read as 
follows.
\begin{lemma}\label{apolarity}
For a subscheme $Z \subseteq X$ and $f \in S_A$, then $I_Z\subseteq I_f$ if 
and only if $I_{Z,A} \subseteq H_f$.
\end{lemma}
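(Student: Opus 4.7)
The plan is to unpack the direct-sum definition of $I_Z\subseteq I_f$ degree by degree and to observe that the only content beyond a triviality lies in degree $A$. Concretely, $I_Z\subseteq I_f$ means $I_{Z,B}\subseteq I_{f,B}$ for every $B\in\Pic(X)$; whenever $A-B$ is not effective, the definition sets $I_{f,B}=T_B$ and the inclusion is automatic, so the whole condition reduces to the degrees $B$ with $A-B$ effective.

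For the forward implication I would specialize to $B=A$, where $A-B$ is the trivial class, which has global sections with $T_{A-B}=\C$. The colon-ideal definition then collapses to $I_{f,A}=H_f$, and the hypothesis $I_Z\subseteq I_f$ restricted to degree $A$ reads precisely $I_{Z,A}\subseteq H_f$.

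For the backward implication, assume $I_{Z,A}\subseteq H_f$, fix any $B$ with $A-B$ effective, and pick $g\in I_{Z,B}$ together with an arbitrary $h\in T_{A-B}$. Since $I_Z$ is an ideal in $T$ (any multiple of a section vanishing on $Z$ still vanishes on $Z$), the product $gh\in T_A$ lies in $I_{Z,A}$, hence in $H_f$ by hypothesis; because $h$ was arbitrary this gives $g\cdot T_{A-B}\subseteq H_f$, i.e.\ $g\in I_{f,B}$, and therefore $I_{Z,B}\subseteq I_{f,B}$ for every $B$.

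I do not foresee a real obstacle: the statement is a formal transcription of the classical apolarity lemma, and the $\Pic(X)$-grading on $\operatorname{Cox}(X)$ plays the same structural role as the $\N$-grading on the usual polynomial ring, so the colon-ideal argument transfers verbatim. The only point deserving a small check is that the convention ``$A-B>0$'' is meant to include the trivial class, so that taking $B=A$ is legitimate in the forward direction; this is consistent with the stated definition since the trivial line bundle does have global sections.
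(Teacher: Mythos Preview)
Your proposal is correct and follows essentially the same argument as the paper: the paper also dismisses the forward direction as immediate and, for the backward direction, uses precisely the ideal property $I_{Z,B}\cdot T_{A-B}\subseteq I_{Z,A}\subseteq H_f$ to conclude $I_{Z,B}\subseteq H_f:T_{A-B}=I_{f,B}$. Your extra care in checking that the trivial class satisfies the convention ``$A-B>0$'' (so that $I_{f,A}=H_f$) is a welcome clarification the paper leaves implicit.
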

\begin{proof}  It suffices to show the if-direction of the equivalence.
 If $B>A$ then $I_{f,B}=T_B$, so it suffices to consider $B<A$.  But
$
  I_{Z,B}\cdot T_{A-B} \subseteq I_{Z,A} \subseteq H_f
$
implies
$
  I_{Z,B} \subseteq H_f:T_{A-B}=I_{f,B}
$
and the lemma follows.
\end{proof}
Additionally, we rephrase in terms of the Cox 
ring the maps associating to a polynomial all its partial derivatives of a 
given order: for $A,B\in \Pic(X)$ and $f\in S_A$ we define the linear map 
\begin{equation}\label{apolarmap}
\phi_{f,B} \colon T_B \longrightarrow S_{A-B}; \qquad g\mapsto g(f)  
\end{equation}
such that 
\begin{gather*}
  g(f)(g')=g'g(f)\in \C\quad {\rm for}\quad g'\in T_{A-B} \quad \text{i.e.} \\
  H_{g(f)} := \bigl( H_f:\langle g\rangle \bigr) \subseteq T_{A-B}.
\end{gather*}
Notice that $\operatorname{ker}\,\phi_{f,B}=I_{f,B}$. 

The previous generalization of apolarity was also recently 
considered in~\cite{Galazka} in the particular case of smooth toric 
varieties. The author uses the apolarity lemma to prove upper bounds on the 
minimum length of subschemes whose linear span contains a general point.

\smallskip
In this paper we present three examples where we describe $\VPowerS_X([f],r_f)$ 
where $X$ is a toric surface different from the projective plane, $f$ a general 
section in~$S_A$ for some $A\in \Pic(X)$ and $r_f$ the minimal integer~$r$ such 
that $\VPowerS_X([f],r)$ is not empty.
In Section~\ref{prelim} we set up the theory for apolarity in the case $X = 
\pp$.
We split the proof of the following theorem among Sections~\ref{biquadratic}, 
\ref{bicubic} and \ref{cubicone}.

\begin{theorem}
\label{thm:main}
Let $X$ be a projective variety, $A \in \Pic(X)$ and $f\in S_A$ be a general 
section.
 \begin{enumerate}[label=(\Alph*)]
  \item
  \label{A}
If $X =\pr^1\times \pr^1$ and $A = (2,2)$, then $\VPowerS_\pp([f],4)$ is a 
threefold isomorphic to a smooth linear complex in the Grassmannian $G(2,4)$ 
blown up along a rational normal quartic curve.
  \item
  \label{B}
If $X =\pr^1\times \pr^1$ and $A = (3,3)$, then $\VPowerS_\pp([f],6)$ is 
isomorphic to a smooth Del Pezzo surface of degree~$5$.
  \item
  \label{C}
If $X = F_1$, namely the blow up of~$\pr^2$ in one point embedded as a cubic 
scroll in~$\pr^4$, and $A = 3H$ where $H$ is the hyperplane class of~$F_1$, 
then $\VPowerS_{F_1}([f],8)$ is isomorphic to~$\pr^2$ blown up in $8$ points.
 \end{enumerate}
\end{theorem}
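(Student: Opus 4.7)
The plan is to apply Lemma~\ref{apolarity} in a coordinated way for each of the three parts. First, for each pair $(X,A)$, I compute the apolar ideal $I_f\subseteq T$ of a generic section $f\in S_A$ piece by piece in the $\Pic(X)$-grading: each graded piece $I_{f,B}$ is the kernel of $\phi_{f,B}$, which for generic $f$ attains the expected dimension $\max\bigl(0, \dim T_B - \dim S_{A-B}\bigr)$. Next, for a generic length-$r_f$ subscheme $Z\subseteq X$, I compute the Hilbert function $\dim I_{Z,B}$ and isolate the bidegrees $B$ for which $0 < \dim I_{Z,B} < \dim I_{f,B}$; these are the bidegrees carrying the geometric information of the apolarity condition. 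By Lemma~\ref{apolarity}, the inclusion $I_{Z,A}\subseteq H_f$ propagates to $I_{Z,B}\subseteq I_{f,B}$ in every such $B$, so the assignment $Z\mapsto I_{Z,B}$ defines a morphism from $\VPowerS$ to the Grassmannian $G(\dim I_{Z,B}, I_{f,B})$.

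Combining the maps in all critical bidegrees yields a morphism from $\VPowerS$ into a product of Grassmannians, whose image is cut out by the multiplicative compatibility relations $I_{Z,B}\cdot T_{B'}\subseteq I_{Z,B+B'}$. For part~(A), the bidegrees $(2,1)$ and $(1,2)$ give $\dim I_{f,B}=4$ and $\dim I_{Z,B}=2$ for generic $Z$, yielding two maps into $G(2,4)$; the compatibility condition picks out a hyperplane section of $G(2,4)$, that is, a smooth linear complex, and the non-injectivity locus of the resulting map assembles into a blow-up along a rational normal quartic. For part~(B), an analysis centred on bidegree $(2,2)$ (with $\dim I_{f,(2,2)}=5$, $\dim I_{Z,(2,2)}=3$) and complemented by $(3,1)$ and $(1,3)$ realizes $\VPowerS_\pp([f],6)$ as the quintic Del Pezzo surface, via its classical embedding inside $G(2,5)$. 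For part~(C), the same procedure in the Cox ring of $F_1$ --- graded by $\Pic(F_1)=\Z\langle H,F\rangle$ with $F$ the fibre class --- identifies $\VPowerS_{F_1}([f],8)$ with the blow-up of $\pr^2$ at eight points determined by $f$.

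The main technical obstacle in each part is the precise identification of the contracted or exceptional locus of the parametrization: length-$r_f$ subschemes that are apolar to $f$ but arise as limits of smooth $r_f$-tuples, rather than as smooth tuples themselves. In part~(A) these degenerations produce the rational normal quartic curve to be blown up; in part~(B) one must verify that the candidate morphism is an isomorphism onto a \emph{smooth} quintic Del Pezzo surface; and in part~(C) the delicate step is pinning down the eight points of $\pr^2$, together with understanding how the cubic scroll embedding of $F_1$ interacts with the Cox-ring grading. Once the combinatorics of the Hilbert functions of $I_f$ and of a generic $Z$ have been laid out, the remainder of the analysis reduces to linear algebra and classical Grassmannian geometry.
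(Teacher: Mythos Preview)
Your outline matches the paper's approach only for part~(A). There the paper does construct the two maps $\Phi_{2,1},\Phi_{1,2}\colon \VPS([f],4)\to G(2,f^\bot_{2,1})\times G(2,f^\bot_{1,2})$ sending $[\Gamma]\mapsto (I_{\Gamma,(2,1)},I_{\Gamma,(1,2)})$, and the image of each factor is a smooth $3$-fold linear complex. However, the identification of this linear complex is not obtained from the multiplicative compatibility $I_{Z,B}\cdot T_{B'}\subseteq I_{Z,B+B'}$ as you suggest: the paper instead computes the intersection of the image with a general $\alpha$-plane $\Sigma_2(g)$ and shows, via Sylvester's Lemma~\ref{rat} applied to the rational sextic $\nu_{2,2}(C_g)$, that this intersection is a single $\pr^1$. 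The blow-up structure is then recovered by identifying the induced birational map $X_{2,1}\dashrightarrow X_{1,2}$ with the map defined by quadrics through the rational normal quartic $D_{2,1}\subseteq X_{2,1}$ of lines in $Z_{2,1}$.

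For parts~(B) and~(C) the paper takes routes entirely different from your Grassmannian strategy, and your sketch leaves the key identifications unsubstantiated. In part~(B) the paper does \emph{not} work inside $G(3,5)$ or $G(2,5)$. Instead it reinterprets $f\in S_{3,3}$ as a cubic form $F$ on $\pr^3$ via the Segre embedding $\nu_{1,1}$, invokes Sylvester's Pentahedral Theorem to obtain the unique apolar $5$-point scheme $\Gamma_0\subseteq\pr^3$, and then quotes Kapranov's theorem that the Hilbert scheme $H_{3t+1}(\Gamma_0)$ of twisted cubics through $\Gamma_0$ is a smooth quintic Del Pezzo. The isomorphism $H_{3t+1}(\Gamma_0)\to\VPS([f],6)$ sends a twisted cubic $C$ to the length-$6$ scheme $C\cap\nu_{1,1}(\pp)$; bijectivity uses Castelnuovo's lemma and Lemma~\ref{rat} again, and smoothness of $\VPS$ is shown separately via a vector-bundle section argument. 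In part~(C) the paper observes that $I_{f,2E+3F}$ is a pencil $K\subseteq|2E+3F|$ of elliptic curves, so every apolar length-$8$ scheme lies on a unique $C\in K$; the elliptic analogue of Sylvester (Lemma~\ref{ell}) gives $\VPSC([f],8)\cong C$ for each $C$, and the residual-point map $\Gamma\mapsto p_\Gamma$ assembles these into a morphism $\psi_f\colon\VPSS([f],8)\to\pr^2$ which is the blow-up at the eight basepoints of $K$. Your claim that ``the remainder of the analysis reduces to linear algebra and classical Grassmannian geometry'' does not account for these external ingredients (Kapranov, Lemma~\ref{ell}), and it is not clear your compatibility relations alone would recover either the Del Pezzo or the $8$-point blow-up without them.
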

 
Apolar rational or elliptic curves play a crucial role in our arguments, in 
particular in the use of the following facts.
For rational curves Sylvester showed (see~\cite{Sylvester1904a}):
\begin{lemma} 
\label{rat}
Let $C \subseteq \pr^{2d-1}$ be a rational normal curve of degree~$2d-1$, then 
there is a unique $d$-secant $\pr^{d-1}$ to~$C$ passing through a general 
point, i.e.\ $\VPowerS_C(y,d) \cong \{pt\}$ for a general point 
$y\in\pr^{2d-1}$. 
Let $C \subseteq \pr^{2d}$ be a rational normal curve of degree~$2d$ and $y$ a 
general point in~$\pr^{2d-1}$, then $\VPowerS_C(y,d+1)\cong C$.
\end{lemma}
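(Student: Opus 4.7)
The plan is to reduce both assertions to a dimension count via the apolarity lemma (Lemma~\ref{apolarity}). Identify $C$ with $\pr^1$, so that its Cox ring is $T=\C[t_0,t_1]$, graded by $\Z=\Pic(\pr^1)$. Under the degree-$n$ embedding $C\subseteq \pr^n$ (with $n=2d-1$ or $n=2d$), the point $y$ corresponds to a binary form $f\in S_n=T_n^{*}$, and any length-$k$ subscheme $Z\subseteq C$ is the zero scheme of a unique (up to scalar) binary form $g_Z\in T_k$, so $I_{Z,k}=\langle g_Z\rangle$. By Lemma~\ref{apolarity}, $Z$ is apolar to $f$ precisely when $g_Z\in I_{f,k}$, and since $V(g_1)=V(g_2)$ forces $g_1$ and $g_2$ to be proportional, the assignment $[g]\mapsto V(g)$ is an injective morphism $\pr(I_{f,k})\to \Hilb_k(C)$ whose image is $\VPowerS_C(y,k)$.

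It thus remains to compute $\dim I_{f,k}=\dim\ker\phi_{f,k}$, where $\phi_{f,k}\colon T_k\to S_{n-k}$ is the apolarity map of~\eqref{apolarmap}. For a general binary form $f\in S_n$, the graded Artinian Gorenstein quotient $T/I_f$ has symmetric Hilbert function $h(i)=\min(i+1,\,n-i+1)$; equivalently, $I_f$ is a complete intersection with generators in degrees $\lceil (n+2)/2 \rceil$ and $\lfloor (n+2)/2 \rfloor$. In the odd-degree case $n=2d-1$, $k=d$: $\dim T_d=d+1$ and $\dim S_{d-1}=d$, so $\dim I_{f,d}=1$ and $\VPowerS_C(y,d)$ is a single point; for general $f$ the unique degree-$d$ form in $I_f$ has $d$ distinct roots, cutting out the claimed apolar subscheme. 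In the even-degree case $n=2d$, $k=d+1$: $\dim T_{d+1}=d+2$ and $\dim S_{d-1}=d$, hence $\dim I_{f,d+1}=2$ and $\VPowerS_C(y,d+1)\cong \pr(I_{f,d+1})\cong \pr^1\cong C$.

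The main point to check is that $\phi_{f,k}$ attains maximal rank for $f$ general; by semicontinuity of rank it suffices to exhibit a single $f$ for which the map is surjective. This can be done by direct analysis of the Hankel (catalecticant) matrix of $f$, or by specialising $f$ to a sum of $\lceil (n+1)/2\rceil$ generic $n$-th powers of linear forms and computing explicitly. Once maximal rank is known, the preceding dimension counts yield both statements at once, with no further geometric input beyond the identification $\pr(I_{f,k})\subseteq \pr(T_k)\hookrightarrow \Hilb_k(\pr^1)$ recorded in the first paragraph.
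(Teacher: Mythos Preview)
Your argument is correct and is essentially the standard modern rendering of Sylvester's result via apolarity and catalecticants. Note, however, that the paper does not supply its own proof of Lemma~\ref{rat}: it simply attributes the statement to Sylvester and cites~\cite{Sylvester1904a}. So there is no in-paper proof to compare against; your write-up would serve as a self-contained proof where the paper gives none.

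One small point worth making explicit in the even case $n=2d$: you identify $\VPowerS_C(y,d+1)$ with $\pr(I_{f,d+1})$, but by definition $\VPowerS_C$ is only the closure of the \emph{smooth} apolar subschemes. You handle this in the odd case (the unique generator has distinct roots for general~$f$), but in the even case you should also remark that for general~$f$ the two degree-$(d+1)$ generators of~$I_f$ have no common root, so the pencil $\pr(I_{f,d+1})$ is basepoint-free and its general member has $d+1$ distinct roots; hence the smooth locus is dense and the identification $\VPowerS_C(y,d+1)=\pr(I_{f,d+1})\cong\pr^1\cong C$ is legitimate. This is routine, but without it the image of $[g]\mapsto V(g)$ is a priori only the set of \emph{all} apolar length-$(d+1)$ subschemes rather than $\VPowerS_C(y,d+1)$.
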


For elliptic curves, the following lemma follows from  Room's description of 
determinantal varieties. We give a proof in Lemma~\ref{ell} in Section~\ref{cubicone}.
\begin{lemma}
\label{elliptic}
Let $C \subseteq \pr^{2d-2}$ be an elliptic normal curve of degree~$2d-1$ and 
$y$ a general point in~$\pr^{2d-1}$, then $\VPowerS_C(y,d)\cong C$.
\end{lemma}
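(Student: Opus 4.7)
The plan is to identify $\VPowerS_C(y,d)$ with $E=C$ by parameterising apolar divisors via $\Pic^{d-1}(E)\cong E$. Write $L$ for the degree-$(2d-1)$ line bundle on $E$ giving the embedding; Riemann--Roch on the elliptic curve yields $h^0(L)=2d-1$ and $h^0(L-D)=d-1$ for every effective divisor $D$ of degree $d$, so $\pr^{2d-2}=\pr(H^0(L)^*)$ and $\langle D\rangle\cong\pr^{d-1}$ as expected. By Lemma~\ref{apolarity}, such a $D$ is apolar to $y$ iff $H^0(L-D)\subseteq H_y$ inside $H^0(L)$. Since $L-D$ depends on $D$ only through its linear equivalence class, there is a natural morphism
\[
 \Psi\colon\VPowerS_C(y,d)\longrightarrow\Pic^{d-1}(E)\cong E,\qquad D\longmapsto[L-D],
\]
and the lemma reduces to showing that $\Psi$ is an isomorphism.

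To build an inverse I would fix $\mathcal{M}\in\Pic^{d-1}(E)$ and consider the multiplication pairing
\[
 m_\mathcal{M}\colon H^0(\mathcal{M})\otimes H^0(L\otimes\mathcal{M}^{-1})\longrightarrow H^0(L)
\]
between spaces of dimensions $d-1$ and $d$. Composing with $y\in H^0(L)^*$ yields a $(d-1)\times d$ scalar matrix $M_\mathcal{M}(y)$, and an apolar divisor $D\in|L\otimes\mathcal{M}^{-1}|$ is precisely the zero locus of a section $s\in\ker M_\mathcal{M}(y)$. For generic $\mathcal{M}$ this kernel is a line, so it picks out a single $D_\mathcal{M}$; these assemble into a rational inverse $\Phi\colon E\dashrightarrow\VPowerS_C(y,d)$ to $\Psi$ satisfying $\Psi\circ\Phi=\operatorname{id}$ where defined.

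The main obstacle will be upgrading $\Phi$ to a morphism on all of $E$ for general $y$, equivalently showing that $M_\mathcal{M}(y)$ has full rank $d-1$ for every $\mathcal{M}\in E$. The locus of $(d-1)\times d$ matrices of rank at most $d-2$ has codimension two in the space of all such matrices, so the rank-drop locus in $E\times\pr^{2d-2}$ has expected codimension two and hence expected dimension $2d-3$; its image in $\pr^{2d-2}$ is then a proper subvariety, and $y$ can be chosen off of it. Here Room's determinantal description of $C$ and of its secant varieties is precisely this multiplication-map formalism and is what guarantees the expected codimension is attained. Once $\Phi$ is a morphism on all of $E$ with $\Psi\circ\Phi=\operatorname{id}_E$, and noting that for general $y$ the divisors $D_\mathcal{M}$ are reduced (so they lie in the closure of the smooth apolar locus defining $\VPowerS_C(y,d)$), the fact that $E$ is a smooth projective curve forces $\Psi$ to be an isomorphism, giving $\VPowerS_C(y,d)\cong C$.
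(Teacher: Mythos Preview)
Your proof is correct and shares with the paper the core idea of parameterising apolar divisors by line bundles via $D\mapsto[L-D]\in\Pic^{d-1}(E)\cong E$. The paper's argument (Lemma~\ref{ell}) reaches the same conclusion by a slightly different route: it lifts to a degree-$2d$ elliptic normal curve $C'\subseteq\pr^{2d-1}$, invokes the Room--Fisher result that the $(d-1)$-secant variety of~$C'$ is the base locus of a pencil of determinantal hypersurfaces of degree~$d$ (so a general point lies on a unique member), and then projects from a point $q\in C'$ back to $C\subseteq\pr^{2d-2}$. Your approach works directly on~$C$ in its given embedding via the multiplication pairings $H^0(\mathcal{M})\otimes H^0(L\otimes\mathcal{M}^{-1})\to H^0(L)$, which is more self-contained and avoids the lifting step; what you lose is the ready-made pencil structure from Fisher's theorem, which is why you have to argue the rank condition separately.

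On that point, your codimension argument can be made sharper and freed from the appeal to Room. The matrix $M_\mathcal{M}(y)$ drops rank precisely when some nonzero $t\in H^0(\mathcal{M})$ satisfies $t\cdot H^0(L\otimes\mathcal{M}^{-1})\subseteq H_y$, i.e.\ when $y$ lies in the span of the degree-$(d-1)$ divisor $Z(t)$. Hence the bad locus of~$y$ in~$\pr^{2d-2}$ is exactly the $(d-1)$-st secant variety of~$C$, which has dimension at most $2(d-1)-1=2d-3$; this directly gives the full-rank conclusion for general~$y$ without invoking expected codimensions of determinantal loci.
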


\section{Apolarity for $\pp$}
\label{prelim}

Let us consider $X = \pp$. In this case, the Picard group of $X$ is~$\Z^2$ and 
its Cox ring is $T = \C[t_0,t_1][u_0,u_1]$, see  for instance 
\cite{CLS}*{Example 5.2.2}.

We can write $T = \bigoplus_{a,b\in\Z} T_{a,b}$, 
where $T_{a,b}$ is the set of bihomogeneous polynomials of bidegree~$(a,b)$. In 
this case, setting $S_{a,b} = T_{a,b}^{*}$, the group $S = \bigoplus_{a,b\in\Z} 
S_{a,b}$ has the structure of a ring. In fact, $S = \C[x_0,x_1][y_0,y_1]$ where 
the duality between homogeneous components of~$S$ and $T$ is induced by 
differentiation: $t_i = \partial/\partial x_i$ and $u_i = \partial/\partial 
y_i$. If $f\in S_{a,b}$ and the annihilator is defined as 
\[f_{c,d}^\bot = \{ g 
\in T_{c,d} \, \colon \, g(f) = 0 \},\] 
then $f_{a,b}^\bot =H_f$, where $H_f$ is 
the hyperplane from Definition~\ref{def:hyperplane}. Setting $f_{c,d}^\bot = 
f_{a,b}^\bot \colon T_{(a-c, b-d)}$, the annihilator ideal of $f$ and the ideal 
$I_f$ defined in~\eqref{eq:def_orthogonal} coincide:
\[
  f^\bot := \bigl\{ g \in T \, \colon \, g(f) = 0 \bigr\}=I_f.
\]
When $a,b>0$, the divisors of class $(a,b)$ on~$\pp$  determine
the Segre-Veronese embedding
\begin{equation}
\label{eq:svembedding}
  \begin{array}{rccc}
    \nu_{a,b} \colon & \pp & \hookrightarrow & \pr^{ ab + a + b } \\
    & \big( [l_1], [l_2] \big) & \mapsto & [l_1^a l_2^b],
  \end{array}
\end{equation}
where $l_1 \in \left\langle x_0, x_1 \right\rangle$, $l_2 \in \left\langle 
y _0, y_1 \right\rangle$, and $\pr^{ ab + a + b }$ is identified with~$\pr(S
_{ a,b})$.  We sometimes call $ \nu_{a,b}$ the $(a,b)$-embedding. 

A subscheme $\Gamma\subseteq \pp$ is apolar to $f$ if  $I_{\Gamma, (a,b) } 
\subseteq f^\bot_{a,b}$, or equivalently  $[f] \in \spn \, \nu_{a,b}(\Gamma)$.
The variety of apolar schemes $\VPS([f],r)$ may be interpreted as a 
variety of sums of powers, i.e.\ as the Zariski 
closure of
\[
  \biggl\{ \bigl[ ([l_{11}], [l_{21}]) , \dots, 
  ([l_{1r}], [l_{2r}]) \bigr] \in \Hilb_r \left( \pp \right) \,
  \colon \, f = \sum_{i=1}^r l_{1i}^a \tth l_{2i}^b \biggr\}.
\]
As in the standard homogeneous case the minimal~$r$ such that $\VPS([f], r)$ is 
nonempty, is called {\em the rank of~$f$}, denoted~$\rank(f)$.

Let us remark that a general form in~$S_{a,b}$ has rank~$r$ if and only if $r$ 
is the minimal~$k$ such that the $k$-secant variety of the Segre-Veronese 
coincides with~$\pr^{ab+a+b}$.

The computation of the dimension of secant varieties carried
in~\cite{CGG2005a}*{Corollary 2.3}, implies that if $f$ 
is a bihomogeneous general form of bidegree~$(a,b)$, then
\begin{equation}
\label{eq:rankf}
   \rank(f) \, = \,
  \begin{cases}
  \;\; 2d+2 & \mbox{if } (a,b) = (2,2d) \text{  for some } d,\\
   \biggl\lceil{\frac{ ( a+1 )( b+1 ) }{ 3 }} \biggr\rceil
  & \mbox{otherwise.} 
   \end{cases}
\end{equation}
For such a general form~$f$, the dimension 
of~$\VPS([f],r)$ is determined by $\rank(f)$ as described in the following 
proposition (see \cite{Dolgachev2004}*{Proposition~3.2} for the 
classical case). 

\begin{proposition}
\label{prop:dimension}
Let $f \in S_{a,b}$ be a general bihomogeneous form of rank~$r$. Then 
$\VPS([f],r)$ is an irreducible variety of dimension
\begin{equation*}
  \dim \VPS([f],r) \, = \,
  \begin{cases}
  \hskip 2cm 3 & \mbox{if } (a,b) = (2,2d),\\
  3\biggl(\biggl\lceil  {\frac{ ( a+1 )( b+1 ) }{ 3 }}\biggr\rceil -
  \frac{ ( a+1 )( b+1 ) }{ 3 }\biggr)&\mbox{if } (a,b)\neq (2,2d).  
  \end{cases}
\end{equation*}
\end{proposition}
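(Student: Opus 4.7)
The plan is to analyse $\VPS([f],r)$ via the standard incidence correspondence on the abstract $r$-secant variety of the Segre--Veronese surface, verify the arithmetic in each case of \eqref{eq:rankf}, and then address irreducibility as a separate point.

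First, set $N = ab+a+b$ and consider
\begin{equation*}
\mathcal{I} = \bigl\{(Z,[f]) \in \Hilb_r(\pp) \times \pr(S_{a,b}) \,\colon\, Z\ \text{smooth},\ \nu_{a,b}(Z)\ \text{spans a}\ \pr^{r-1} \ni [f]\bigr\}.
\end{equation*}
The first projection realises $\mathcal{I}$ as a $\pr^{r-1}$-bundle over an open subset of $\Hilb_r(\pp)$, which is smooth and irreducible of dimension $2r$ by Fogarty; hence $\mathcal{I}$ is irreducible of dimension $3r-1$. By \eqref{eq:rankf}, $r$ is exactly the smallest integer for which the $r$-secant variety of $\nu_{a,b}(\pp)$ fills $\pr^N$, so the second projection $\pi_2 \colon \mathcal{I} \to \pr^N$ is dominant.

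Over a general $[f]$ the fiber $\pi_2^{-1}([f])$ has dimension $3r-1-N$, and $\pi_1$ restricted to this fiber is injective; it identifies the fiber with the open part of $\VPS([f],r)$ in $\Hilb_r(\pp)$, whose closure has the same dimension. For $(a,b)=(2,2d)$ we have $r = 2d+2$ and $N = 6d+2$, giving $3r-1-N = 3$; otherwise $N = (a+1)(b+1)-1$ and
\begin{equation*}
3r-1-N \,=\, 3r - (a+1)(b+1) \,=\, 3\left(\left\lceil \tfrac{(a+1)(b+1)}{3}\right\rceil - \tfrac{(a+1)(b+1)}{3}\right),
\end{equation*}
matching the stated formula.

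The main obstacle is irreducibility. Since $\VPS([f],r)$ is by definition the closure of the smooth apolar locus, irreducibility reduces to showing that the generic fiber of $\pi_2$ is irreducible; generic smoothness gives smoothness, so it is enough to establish connectedness, equivalently, by Stein factorisation, that the finite factor of $\pi_2$ has degree $1$. I would follow the classical strategy of \cite{Dolgachev2004}*{Proposition~3.2}, using the $\mathrm{PGL}_2 \times \mathrm{PGL}_2$ action on $\pr(S_{a,b})$ together with a monodromy argument on $\mathcal{I}$ to show that the monodromy acts transitively on the components of the generic fiber; combined with the irreducibility of $\mathcal{I}$, this forces a single component. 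In any case, in the three specific instances of Theorem~\ref{thm:main} treated in Sections~\ref{biquadratic},~\ref{bicubic} and~\ref{cubicone}, irreducibility will be manifest from the explicit geometric description of $\VPS([f], r_f)$ obtained there.
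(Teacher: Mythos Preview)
Your approach is the paper's: set up the incidence correspondence, observe that the first projection exhibits it as a $\pr^{r-1}$-bundle over an open piece of $\Hilb_r(\pp)$ (irreducible of dimension $2r$ by Fogarty), so the incidence variety is irreducible of dimension $3r-1$; the second projection is dominant by the definition of rank, and the fiber-dimension formula together with~\eqref{eq:rankf} gives the stated numbers. The arithmetic you write out matches the paper's verbatim.

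The only divergence is that you treat irreducibility of the general fiber as a separate problem needing an argument, whereas the paper simply writes ``the fiber of $\pi_2$ over $[f]$ is $\VPS([f],r)$'' and stops, tacitly accepting irreducibility of the incidence variety as enough. Strictly speaking that is the same gap you flag: irreducibility of the total space does not by itself force irreducibility of the general fiber. Your proposed monodromy/Stein-factorization repair is reasonable in outline, but note that Stein factorization needs a proper morphism, so you would have to compactify $\mathcal{I}$ suitably before invoking it, and the $\mathrm{PGL}_2\times\mathrm{PGL}_2$ action alone does not obviously yield transitivity on components without saying more about the stabilizer of a general $[f]$. Your fallback remark---that in the three cases of Theorem~\ref{thm:main} irreducibility is visible from the explicit descriptions in Sections~\ref{biquadratic}--\ref{cubicone}---is exactly how the paper proceeds in practice.
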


\begin{proof}
Let us denote $\Hilb_r \left( \pp \right)$ 
by~$\mathcal{H}$. We consider the incidence variety
\[ 
  \mathcal{X} = \Big\{ \bigl([\Gamma], [f] \bigr) \in \mathcal{H} \times 
  \pr^{ab + a +b} \, \colon \, 
  [\Gamma] \in \VPS([f],r) \Big\}\,.
\]
Then we have the two projection maps:
\[ 
  \pi_1 \colon \calX \longrightarrow \mathcal{H} \qquad \qquad 
  \pi_2 \colon \calX \longrightarrow \pr^{ab+a+b} 
\]
Let $\calU$ be the open subset of~$\mathcal{H}$ parametrizing zero-dimensional
schemes given by~$r$ distinct points in~$\pp$.
It is possible to restrict~$\calU$ so that the $(a,b)$-th powers of 
all linear forms associated to such points are linearly independent. 
In this way we can prove that $\pi_1$ is dominant. Moreover, if $[\Gamma] \in 
\calU$, the fiber of~$\pi_1$ over~$[\Gamma]$ is an open set of a linear 
space of dimension~$r-1$. Since $\mathcal{H}$ is irreducible 
\cite{Fogarty1968}, then also~$\mathcal{X}$ is irreducible 
and of dimension~$3r-1$. The fiber of~$\pi_2$ over $[f] \in \pr^{ab +a +b}$ 
is~$\VPS([f],r)$, so for a general~$f$, the variety $\VPS([f],r)$ 
has dimension $3r-1-(ab+a+b)$. Using formula~\eqref{eq:rankf} the statement 
follows.
\end{proof}

\section{Bihomogeneous forms of bidegree $(2,2)$}\label{biquadratic}

The Segre-Veronese embedding~\eqref{eq:svembedding} is in this 
case the $(2,2)$-embedding of~$\pr^1\times \pr^1$ in~$\pr^8$, denoted 
$\nu_{2,2} \colon \pr^1 \times \pr^1 \hookrightarrow \pr^8$.
If $f$ is a general bihomogeneous form in~$S_{2,2}$, then by 
the formula of the rank~\eqref{eq:rankf} applied to the case $(a,b) = (2,2d)$ 
with~$d=1$, we have~$\rank(f)=4$, and $\dim \VPS ([f], 4) = 3$
by Proposition~\ref{prop:dimension}.

\begin{lemma}
\label{lem:orthogonal_biquadratic}
For a general form $f\in S_{2,2}$ the orthogonal~$f^\bot$ is generated 
by~$f_{2,1}^\bot, f_{1,2}^\bot, f_{3,0}^\bot$ and 
$f_{0,3}^\bot$. Moreover, both $f^{\bot}_{2,1}$ and $f^{\bot}_{1,2}$ have 
dimension~$4$.
\end{lemma}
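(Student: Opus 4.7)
The proof splits into a dimension count and a generation statement. \textbf{Dimensions.} The apolarity map $\phi_{f,(2,1)}\colon T_{2,1} \to S_{0,1}$ from~\eqref{apolarmap} has source of dimension $6$ and target of dimension $2$, so $f^\bot_{2,1} = \ker\phi_{f,(2,1)}$ has dimension at least $4$. Surjectivity of $\phi_{f,(2,1)}$ is a Zariski-open condition, so it is enough to exhibit a single $f$ realising it: writing $f = A x_0^2 + B x_0 x_1 + C x_1^2$ with $A, B, C \in S_{0,2}$, the images of $t_0^2 u_0$ and $t_0^2 u_1$ are (up to scalar) $\partial A/\partial y_0$ and $\partial A/\partial y_1$, and for generic $A$ these already span $S_{0,1}$. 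The case of $\phi_{f,(1,2)}$ is symmetric. An entirely analogous maximal-rank argument shows that for generic $f$ the apolar pieces in bidegrees $(1,0), (0,1), (2,0), (0,2), (1,1)$ all vanish (in the last three the relevant map is between equidimensional spaces and becomes an isomorphism on a non-empty open set).

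\textbf{Generation.} Let $J \subseteq T$ be the ideal generated by $f^\bot_{2,1} + f^\bot_{1,2} + f^\bot_{3,0} + f^\bot_{0,3}$; one must verify $J_{c,d} = f^\bot_{c,d}$ in each bidegree. For $c \ge 3$ or $d \ge 3$ the definition gives $f^\bot_{c,d} = T_{c,d}$, and since $T$ is a bigraded polynomial ring a direct monomial factoring shows $T_{3,0} \cdot T_{c-3,d} = T_{c,d}$ when $c \ge 3$ and $T_{0,3}\cdot T_{c,d-3} = T_{c,d}$ when $d \ge 3$; since $f^\bot_{3,0} = T_{3,0}$ and $f^\bot_{0,3} = T_{0,3}$, all such bidegrees lie in $J$. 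The vanishings from the first paragraph cover $(0,0), (1,0), (0,1), (2,0), (0,2), (1,1)$, while $(2,1)$ and $(1,2)$ are in $J$ by construction, so only bidegree $(2,2)$ remains.

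The main obstacle is the equality $H_f = f^\bot_{2,2} = J_{2,2}$. The containment $J_{2,2} \subseteq H_f$ is immediate from $(gh)(f) = h(g(f)) = 0$; for the reverse I will analyse the multiplication map $\mu\colon f^\bot_{2,1} \otimes T_{0,1} \to T_{2,2}$. Writing any $g \in T_{2,1}$ as $g = g^{(0)}u_0 + g^{(1)}u_1$ with $g^{(i)} \in T_{2,0}$, the equation $g_1 u_0 + g_2 u_1 = 0$ in $T_{2,2}$ forces $g_1 = h u_1$ and $g_2 = -h u_0$ for a common $h \in T_{2,0}$; imposing $g_i \in f^\bot_{2,1}$ then amounts to $u_0(h(f)) = u_1(h(f)) = 0$, i.e.\ $h \in f^\bot_{2,0}$, which vanishes for generic $f$ by the first paragraph. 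Hence $\mu$ is injective, so $\dim (f^\bot_{2,1}\cdot T_{0,1}) = 4\cdot 2 = 8 = \dim H_f$, and equality follows, completing the verification of the generation statement.
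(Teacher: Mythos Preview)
Your proof is correct and follows essentially the same approach as the paper's: both reduce the generation question to bidegree $(2,2)$ and then show that the multiplication map $f^\bot_{2,1}\otimes T_{0,1}\to T_{2,2}$ is injective by factoring a syzygy $g_1u_0+g_2u_1=0$ as $g_1=hu_1$, $g_2=-hu_0$ with $h\in T_{2,0}$, and then using $f^\bot_{2,0}=0$ for general~$f$. Your write-up is somewhat more explicit than the paper's (you exhibit a concrete $f$ for the maximal-rank claim and spell out the low-bidegree vanishings), but the key idea is identical.
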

\begin{proof} 
Since $\dim T_{2,1} = 6$ and $\dim S_{0,1} = 2$, and $f$ is general, the 
$\ker \phi_{f,(2,1)} = f_{2,1}^\bot$ has dimension~$4$, with $\phi_{f,(2,1)}$ as 
defined in~\eqref{apolarmap}. By symmetry, also $f^\bot_{1,2}$ has 
dimension~$4$.

Consider the vector subspace $T_{0,1} \cdot f^{\bot}_{2,1} \subseteq 
f^{\bot}_{2,2}$: if it is of dimension~$8$, it means that we do not need 
elements from~$f^{\bot}_{2,2}$ to generate $f^{\bot}$. Suppose that $\dim 
T_{0,1} \cdot f^{\bot}_{2,1} < 8$: if $g_1, \dotsc, g_4$ is a basis 
for~$f^{\bot}_{2,1}$, then $u_0 \tth g_1, \dotsc, u_0 \tth g_4$, $u_1 \tth g_1, 
\dotsc, u_1 \tth g_4$ are linearly dependent. So, for some $h_1, h_2 \in 
f^{\bot}_{2,1}$ we have $u_0 \tth h_1 + u_1 \tth h_2 = 0$. Then $h_1 = u_1 \tth 
\tilde{h}$ and $-h_2 = u_0 \tilde{h}$ for some nonzero $\tilde{h} \in T_{0,2}$. 
Hence $T_{0,1} \cdot \tilde{h} \subseteq f^{\bot}$, which forces $\tilde{h} \in 
f^{\bot}$. But $\tilde{h}$ has bidegree~$(2,0)$, and by the generality 
assumption on~$f$ there is no nontrivial element in~$f^{\bot}_{2,0}$. Hence 
$\dim T_{0,1} \cdot f^{\bot}_{2,1} = 8$, and so $T_{0,1} \cdot f^{\bot}_{2,1} = 
f^{\bot}_{2,2}$.

Moreover, since $f$ is a form of bidegree $(2,2)$, then $f^{\bot}_{a,b} = 
T_{a,b}$ whenever~$a$ or~$b$ is greater than or equal to~$3$. Notice that 
$T_{3,b} = T_{3,0}\cdot T_{0,b}$, and $T_{a,3} = T_{0,3} \cdot T _{a,0}$, for 
every $a, b\geq 1$. Thus, $f^\bot_{2,1}$, $f^\bot_{1,2}$ together with 
$f^\bot_{3,0} = T_{3,0}$ and $f^\bot_{0,3} = T_{0,3}$ generate the 
ideal~$f^\bot$. 
\end{proof}

The space of sections $f^{\bot}_{2,1}$ defines a linear system of 
$(2,1)$-curves 
on~$\pp$ and, by Lemma~\ref{lem:orthogonal_biquadratic}, a rational map 
$\delta_{2,1} \colon \pp \longrightarrow \pr^3$ fitting in the diagram
\begin{equation}
\label{eq:delta}
  \begin{array}{c}
  \xymatrix@R=.0cm@C=.6cm{& & \pr^5 \ar@{-->}[dd]^-{\pi_{2,1}} \ar@{}[r]|-{=} & 
\pr\left( S_{2,1} \right) \\ 
\pr^1 \times \pr^1 \ar[rru]^-{\eta_{2,1}} \ar@{-->}[rrd]_{\delta_{2,1}} \\ 
& & \pr^3  \ar@{}[r]|-{=} & \pr \bigl( \left( f^{\bot}_{2,1} \right)^{*} 
\bigr)}
  \end{array}
\end{equation}
where $\eta_{2,1}$ is the map induced by the complete linear systems of 
$(2,1)$-curves in~$\pp$, and $\pi_{2,1}$ is a linear projection. 
All constructions and results from now on apply to both~$f^{\bot}_{2,1}$ 
and~$f^{\bot}_{1,2}$.

First, we prove that $\delta_{2,1}$ is a morphism. For this we analyze the 
projection center $L_{2,1}$ of $\pi_{2,1}$. By definition, $L_{2,1} \subseteq 
\pr(S_{2,1})$ is spanned by the forms annihilated by $f_{2,1}^{\bot}$, i.e.\ by 
the partials $\partial f / \partial y_0$ and $\partial f / \partial y_1$. 
Consider the surface scroll $Y_{2,1} := \eta_{2,1} \left( \pp \right) \subseteq 
\pr \left( S_{2,1} \right)$. The $(1,0)$-curves on~$\pp$ are mapped to lines 
in~$Y_{2,1}$, while the $(0,1)$-curves are mapped to conics. The planes of these 
conics are the planes spanned by forms $q(x_0,x_1) \cdot l(y_0, y_1)\in 
S_{2,1},$ where $l(y_0, y_1)\in \left\langle y_0,y_1 \right\rangle$ is a fixed 
linear form. We let $W_{2,1}$ be the threefold union of these planes.

\begin{lemma}
\label{lem:partials_22}
Let $f \in S_{2,2}$ be a general form. Then no linear combination of its
partial derivatives $\partial f / \partial y_0$ and $\partial f / \partial y_1$
is of the form $q(x_0,x_1) \cdot l(y_0, y_1)$, where $q$ and $l$ are
respectively a quadratic and a linear form. In particular, the line $L_{2,1} 
\subseteq \pr(S_{2,1})$ does not intersect~$W_{2,1}$.
\end{lemma}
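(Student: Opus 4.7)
The plan is to reduce the lemma to a linear algebra computation in the $3$-dimensional exterior square $\Lambda^2 S_{2,0}$. First I write $f = A\tth y_0^2 + 2B\tth y_0 y_1 + C\tth y_1^2$ with $A,B,C \in S_{2,0}$, so that $\partial f/\partial y_0 = 2(Ay_0 + By_1)$ and $\partial f/\partial y_1 = 2(By_0 + Cy_1)$; every element of the line $L_{2,1}\subseteq \pr(S_{2,1})$ is then, up to scalar,
\[
  (\lambda_0 A + \lambda_1 B)\tth y_0 + (\lambda_0 B + \lambda_1 C)\tth y_1 \qquad\text{for some }[\lambda_0:\lambda_1]\in \pr^1.
\]

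Next I identify $W_{2,1}$ with the Segre variety $\pr(S_{2,0})\times \pr(S_{0,1}) \subseteq \pr(S_{2,1})$ of decomposable forms $q(x_0,x_1)\cdot l(y_0,y_1)$. Under this identification, a form $h_0\tth y_0 + h_1\tth y_1 \in S_{2,1}$ with $h_0, h_1 \in S_{2,0}$ lies in $W_{2,1}$ precisely when $h_0\wedge h_1 = 0$ in $\Lambda^2 S_{2,0}$. Applied to the combination above and expanded bilinearly, the condition $L_{2,1}\cap W_{2,1}\ne \emptyset$ becomes the existence of a nonzero pair $(\lambda_0, \lambda_1)$ with
\[
  \lambda_0^2\tth(A\wedge B) + \lambda_0 \lambda_1\tth(A\wedge C) + \lambda_1^2\tth(B\wedge C) = 0 \qquad\text{in }\Lambda^2 S_{2,0}.
\]

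Since $\dim S_{2,0} = 3$, also $\dim \Lambda^2 S_{2,0} = 3$, so the key sub-claim is that for a generic $f$ the three bivectors $A\wedge B$, $A\wedge C$, $B\wedge C$ form a basis of $\Lambda^2 S_{2,0}$: once this is known, the displayed equation forces $\lambda_0^2 = \lambda_0 \lambda_1 = \lambda_1^2 = 0$, hence $(\lambda_0, \lambda_1)=(0,0)$. This proves the main assertion, and the ``in particular'' clause is then immediate, since $W_{2,1}$ is by construction the locus of decomposable forms.

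Linear independence of the three bivectors is a Zariski-open condition on the coefficients of $f$, so it suffices to exhibit one example. Taking $A = x_0^2$, $B = x_0 x_1$, $C = x_1^2$, in the basis $(x_0^2, x_0 x_1, x_1^2)$ of $S_{2,0}$ the three wedges become the standard generators of $\Lambda^2 S_{2,0}$, and independence is manifest. The only mildly subtle point is spotting the Segre/wedge reformulation in the second paragraph; once that is in place, everything reduces to this openness argument and a one-line genericity check.
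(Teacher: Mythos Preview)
Your proof is correct. Both you and the paper begin with the same decomposition $f = Ay_0^2 + 2By_0y_1 + Cy_1^2$ and reduce the question to whether the two $S_{2,0}$-coefficients of a linear combination of the partials can be proportional. From there the paper argues geometrically: as $[\lambda_0:\lambda_1]$ varies, the coefficients $\lambda_0 A + \lambda_1 B$ and $\lambda_0 B + \lambda_1 C$ trace out two distinct lines in $\pr(S_{2,0})\cong\pr^2$ (since $A,B,C$ are independent for general~$f$), meeting only at~$[B]$; but no single $[\lambda_0:\lambda_1]$ sends both pencils to~$[B]$. Your route is instead to expand $h_0\wedge h_1$ as a $\Lambda^2 S_{2,0}$-valued quadratic form in $(\lambda_0,\lambda_1)$ and observe that its three coefficients form a basis. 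The generic hypotheses coincide (linear independence of $A,B,C$ is equivalent to independence of the three bivectors), so the two arguments are really two packagings of the same linear-algebra fact; yours is a clean one-line computation once the wedge reformulation is set up, while the paper's version carries a more transparent geometric picture of the two pencils.
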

\begin{proof}
Write $f$ as 
$
  y_0^2 \tth Q + y_0 y_1 \tth Q' + y_1^2 \tth Q''.
$
Then, a linear combination $\lambda \tth \nicefrac{\partial f}{\partial y_0} + 
\mu \tth \nicefrac{\partial f}{\partial y_1}$ is of the form~$q \cdot l$ if 
and only if $\lambda Q + \mu Q'$ is proportional to $\lambda Q' + \mu Q''$. 
Since $f$ is general, we can suppose that $Q,\, Q'$ and $Q''$ are linearly 
independent. Then the two pencil of quadrics $\lambda Q + \mu Q'$ and $\lambda 
Q' + \mu Q''$ have at most one point in common, which hence must coincide 
with~$Q'$. But
\[
 \lambda Q + \mu Q' = Q' \;\; \Leftrightarrow \;\; \lambda = 0 
 \qquad \text{and} \qquad
 \lambda Q' + \mu Q'' = Q' \;\; \Leftrightarrow \;\; \mu = 0.
\]
This proves the claim.
\end{proof}

\begin{corollary}
\label{cor:morphism} 
  The map $\delta_{2,1}$ defined by $f^{\bot}_{2,1}$ is a morphism. Moreover, 
all lines in $Z_{2,1}:=\delta_{2,1} \left( \pp   \right)$ are linear projections 
of lines in~$Y_{2,1}=\eta_{2,1} \left( \pp \right)$. In particular, no conic 
in~$Y_{2,1}$ is mapped to a line by~$\pi_{2,1}$. The analogous result holds for 
the $(1,2)$ case.
\end{corollary}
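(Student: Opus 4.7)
The plan is to split the corollary into three pieces: that $\delta_{2,1}$ is a morphism, that no irreducible conic of $Y_{2,1}$ maps to a line (the ``in particular'' clause, which I handle before the main assertion since it is used in the argument), and that every line in $Z_{2,1}$ is the projection of a line in $Y_{2,1}$. The first two follow from the containment $Y_{2,1} \subseteq W_{2,1}$: a point $[l_1^2 \, l_2] \in Y_{2,1}$ can be written as $[q \cdot l]$ with $q = l_1^2$ and $l = l_2$, so it lies in the plane of $W_{2,1}$ indexed by $[l_2]$. By Lemma~\ref{lem:partials_22} the line $L_{2,1}$ misses $W_{2,1}$, hence misses $Y_{2,1}$ (so $\delta_{2,1} = \pi_{2,1} \circ \eta_{2,1}$ is defined on all of $\pp$) and also misses the plane $P_C$ of any irreducible conic $C \subseteq Y_{2,1}$; the projection therefore restricts to an isomorphism $P_C \to \pi_{2,1}(P_C) \subseteq \pr^3$, sending $C$ to a conic rather than a line.

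For the main assertion I would first bound $\deg(\pi_{2,1}|_{Y_{2,1}})$. Since $Y_{2,1}$ has degree $4$ in $\pr^5$ and spans $\pr^5$, its image $Z_{2,1}$ spans $\pr^3$ and is therefore non-degenerate, giving $\deg Z_{2,1} \geq 2$. Projection from a line disjoint from $Y_{2,1}$ preserves degree, so $\deg(\pi_{2,1}|_{Y_{2,1}}) \cdot \deg Z_{2,1} = 4$ and hence $\deg(\pi_{2,1}|_{Y_{2,1}}) \leq 2$. Now fix a line $\ell \subseteq Z_{2,1}$ and set $C := Y_{2,1} \cap \pi_{2,1}^{-1}(\ell)$. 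Since $\pi_{2,1}|_{Y_{2,1}}$ is finite, each irreducible component $C_i$ of $C_{\mathrm{red}}$ surjects onto $\ell$, and the sum of the covering degrees equals $\deg(\pi_{2,1}|_{Y_{2,1}}) \leq 2$. Hence each $C_i$ has degree at most $2$ in $\pr^5$; writing the class of $C_i$ on $\pp$ as $(a,b)$ this reads $a + 2b \leq 2$, and since the class $(2,0)$ has no irreducible reduced representative, $C_i$ must be a $(1,0)$-ruling or a $(0,1)$-ruling. The conic case is excluded by the previous paragraph, so $C_{\mathrm{red}}$ is a union of $(1,0)$-rulings, each of which is a line in $\pr^5$ disjoint from $L_{2,1}$ and so projects to a line in $\pr^3$; thus $\ell$ is the image of any one of them.

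The main obstacle is securing the degree bound $\deg(\pi_{2,1}|_{Y_{2,1}}) \leq 2$ together with the classification of irreducible curves of degree at most $2$ on $Y_{2,1}$: once both are in hand and combined with the exclusion of conics, only $(1,0)$-rulings remain as possible components of $\pi_{2,1}^{-1}(\ell) \cap Y_{2,1}$, and the corollary follows. The analogous argument in the $(1,2)$ case is identical after swapping the two factors of $\pp$.
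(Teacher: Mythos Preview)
Your treatment of the first two claims is fine and matches the paper: $Y_{2,1}\subseteq W_{2,1}$ together with Lemma~\ref{lem:partials_22} gives that $\delta_{2,1}$ is a morphism and that every $(0,1)$-conic projects isomorphically to a conic.

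The gap is in your analysis of lines. The assertion ``the sum of the covering degrees equals $\deg(\pi_{2,1}|_{Y_{2,1}})$'' is not justified and is in fact false in the situation you need to exclude. The equality would hold if $\pi_{2,1}|_{Y_{2,1}}$ were flat, but it is not: $Y_{2,1}$ is smooth while $Z_{2,1}$ is singular, so (as with the normalization of a nodal curve) fibers over the non-normal locus of~$Z_{2,1}$ can have strictly more points than the generic degree. Your bound only controls fibers over a general point of~$Z_{2,1}$, not over a general point of an arbitrary line $\ell\subseteq Z_{2,1}$, and you have not shown that $\ell$ meets the locus where the map is one-to-one. Concretely, suppose the pencil $\langle g_1,g_2\rangle\subseteq f^{\bot}_{2,1}$ cutting out $\pi_{2,1}^{-1}(\ell)$ had a common $(1,1)$-factor~$h$. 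Then the zero locus of~$h$ is an irreducible twisted cubic in $Y_{2,1}$ spanning the whole $\pr^3=\pi_{2,1}^{-1}(\ell)$, hence projecting $3$-to-$1$ onto~$\ell$; this component has degree~$3$, violating your bound, yet nothing in your argument rules it out.

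The paper closes this gap with one extra observation: for general~$f$ one has $f^{\bot}_{1,1}=\{0\}$. Since $L_{2,1}$ lies in the span of a $(1,1)$-curve if and only if its defining form is apolar to~$f$, this immediately excludes the $(1,1)$ case. Once $(1,1)$-curves are excluded, the only irreducible curves on $Y_{2,1}$ whose span is contained in a~$\pr^3$ through~$L_{2,1}$ are $(1,0)$-lines and $(0,1)$-conics, and your argument for conics finishes the proof. So the fix is short, but it does require invoking $f^{\bot}_{1,1}=0$ rather than the degree bound.
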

\begin{proof}
Since the projection center~$L_{2,1}$ does not intersect $W_{2,1} \supseteq 
Y_{2,1}$, the projection~$\pi_{2,1}$ restricted to~$Y_{2,1}$ and hence 
$\delta_{2,1}$ are morphisms, and no conic in~$Y_{2,1}$ is mapped to a line 
in~$Z_{2,1}$.
Furthermore, $f_{1,1}^{\bot} = \{0\}$, so $L_{2,1}$ does not lie in the span of 
any $(1,1)$-curve in $Y_{2,1}$, therefore  every line in~$Z_{2,1}$ is the 
linear projection of a $(1,0)$-curve, i.e.\ a line in~$Y_{2,1}$.
\end{proof}

\begin{lemma}
\label{lem:pencils_21}
  Let $f \in S_{2,2}$ be a general form and suppose that $g_1, g_2 \in
f^{\bot}_{2,1}$ span a linear space of dimension~$2$. 
Then either the ideal $(g_1, g_2)$ defines a scheme of length~$4$ in~$\pp$, 
or the pencil of $(2,1)$-curves defined by~$g_1$ and~$g_2$ has a common 
component, a $(1,0)$-curve. 
\end{lemma}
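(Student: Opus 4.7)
The plan is to distinguish two cases according to whether the $(2,1)$-curves defined by $g_1$ and $g_2$ share a common component on $\pp$, and in the presence of one to narrow down its bidegree using the generality of~$f$.

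First, if the two curves have no common component, then B{\'e}zout's theorem on $\pp$ yields that they intersect in a zero-dimensional subscheme of length $2\cdot 1+1\cdot 2=4$, which is precisely the first alternative of the statement.

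Otherwise, let $D$ be a common component, of bidegree $(a,b)$. Since both $g_i$ define $(2,1)$-curves and are linearly independent, the only possibilities are $(a,b)\in\{(1,0),(0,1),(2,0),(1,1)\}$. My plan is to rule out the last three using the generality of~$f$; concretely, I will use that for general $f\in S_{2,2}$ the apolar maps $\phi_{f,(2,0)}\colon T_{2,0}\to S_{0,2}$ and $\phi_{f,(1,1)}\colon T_{1,1}\to S_{1,1}$ are isomorphisms between equal-dimensional spaces, so that $f^{\bot}_{2,0}=0$ and $f^{\bot}_{1,1}=0$ (the latter vanishing has already been noted in Corollary~\ref{cor:morphism}).

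The two cases $(a,b)=(2,0)$ and $(a,b)=(1,1)$ are straightforward: after dividing the common factor $D$ out of $g_1$ and $g_2$ the remaining cofactors are linearly independent in $T_{0,1}$, respectively $T_{1,0}$, and hence span this entire $2$-dimensional space; the apolarity conditions $(D\cdot M)(f)=0$ for all $M$ in the cofactor space then force $D(f)$ to be annihilated by every element of $T_{0,1}$ (respectively $T_{1,0}$), giving $D(f)=0$ and contradicting the vanishing of $f^{\bot}_{2,0}$ or $f^{\bot}_{1,1}$. The main obstacle is the case $(a,b)=(0,1)$, where I expect the proof to be genuinely subtler: writing $g_i=M\cdot Q_i$ with common $M\in T_{0,1}$ and linearly independent $Q_1,Q_2\in T_{2,0}$, the apolarity conditions only yield $M\cdot Q_i(f)=0$ in $S_{0,1}$ rather than $Q_i(f)=0$. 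My plan here is to use the isomorphism $\phi_{f,(2,0)}$ to conclude that $Q_1(f)$ and $Q_2(f)$ are linearly independent in $S_{0,2}$, while the apolarity conditions force both to lie in the $1$-dimensional kernel of multiplication by $M\colon S_{0,2}\to S_{0,1}$. The resulting contradiction leaves $(a,b)=(1,0)$ as the only remaining possibility.
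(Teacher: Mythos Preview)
Your proof is correct. The overall case analysis and the treatment of the no-common-component, $(2,0)$, and $(1,1)$ cases match the paper's argument (both rely on $f^{\bot}_{2,0}=0$ and $f^{\bot}_{1,1}=0$, which follow from Lemma~\ref{lem:orthogonal_biquadratic}). The difference lies in the $(0,1)$ case: the paper argues geometrically, observing that a common $(0,1)$-line is mapped by~$\eta_{2,1}$ to a conic whose plane would then have to meet the projection center~$L_{2,1}$, contradicting Corollary~\ref{cor:morphism} (and hence Lemma~\ref{lem:partials_22}). Your argument is instead purely algebraic: injectivity of~$\phi_{f,(2,0)}$ forces $Q_1(f),Q_2(f)$ to be independent in~$S_{0,2}$, while apolarity confines them to the one-dimensional kernel of the differentiation operator $M\colon S_{0,2}\to S_{0,1}$. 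Your route is more self-contained and does not require the geometric apparatus of diagram~\eqref{eq:delta}; the paper's route has the advantage of tying the lemma into the picture of~$Z_{2,1}$ used throughout the section. One small wording point: you call the map $S_{0,2}\to S_{0,1}$ ``multiplication by~$M$'', but since $M\in T_{0,1}$ acts on~$S$ by differentiation, it would be clearer to call it the action of~$M$ (or contraction by~$M$); the mathematics, however, is correct.
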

\begin{proof}
  We need to exclude that the pencil of $(2,1)$-curves defined by~$g_1$ 
and~$g_2$ has a fixed component that is a $(2,0)$, a $(1,1)$ or a $(0,1)$-curve. 
We treat these cases one by one.

If there is a common $(2,0)$-curve, then all curves in the pencil split into it 
and a $(0,1)$-line. Therefore there exists $q \in \C[u_0, u_1]_2$ such that
\begin{equation*}
  (\alpha \tth t_0 + \beta \tth t_1) \tth q \cdot f \; = \; 0 \quad \forall 
  \alpha, \beta \in \C,
\end{equation*}
so $q \in f^{\bot}_{0,2}$, but this contradicts 
Lemma~\ref{lem:orthogonal_biquadratic}. Similarly, if we assume that there is a 
common $(1,1)$-curve, then $f^{\bot}_{1,1}$ would be non-trivial and this again
contradicts Lemma~\ref{lem:orthogonal_biquadratic}. 

We are left with the case when the pencil $\left\langle g_1, g_2 
\right\rangle$ has a $(0,1)$-curve $\ell$ in its base locus. Consider the 
maps in the diagram~(\ref{eq:delta}), with $L_{2,1}$ the center 
of the projection~$\pi_{2,1}$. Then $\ell$ is sent to a conic by~$\eta_{2,1}$. 
Hence there is a pencil of hyperplanes in~$\pr^5$ passing through~$L_{2,1}$ 
and having a conic in its base locus. Since the base locus of a pencil 
of hyperplanes in~$\pr^5$ is a~$\pr^3$, both the conic 
and the line~$L_{2,1}$ lie in a~$\pr^3$. The latter happens if and only if 
$L_{2,1}$ intersects the plane spanned by such a conic, but this is not 
possible by Corollary~\ref{cor:morphism}.
\end{proof}

The image $Z_{2,1}$ of~$\pp$ under~$\delta_{2,1}$ is a rational scroll, 
since it is rational and covered by the images of the lines in the scroll 
$\eta_{2,1} \left( \pp \right)$. We describe its singular 
locus.

\begin{lemma}\label{lem:singular_locus}
$Z_{2,1}$ is a quartic surface 
in~$\pr^3$ that has double points along a twisted cubic curve and no triple 
points. 
\end{lemma}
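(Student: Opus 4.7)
The plan is to establish three claims in turn: that $Z_{2,1}$ is a quartic surface, that it has no triple points, and that its singular locus is a twisted cubic along which the surface is doubled.

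First I would show $\deg Z_{2,1}=4$. The morphism $\delta_{2,1}$ is defined by a three-dimensional linear subsystem of the space of $(2,1)$-curves (namely the one corresponding to $f_{2,1}^{\bot}$), so $\deg(Z_{2,1})\cdot\deg(\delta_{2,1})=(2,1)^{2}=4$. By Lemma~\ref{lem:pencils_21} a general pencil in $f_{2,1}^{\bot}$ has a length-$4$ base scheme with no fixed $(1,0)$-component, which implies that $\delta_{2,1}$ separates a generic pair of points of~$\pp$; hence $\delta_{2,1}$ is birational and $Z_{2,1}$ has degree~$4$.

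Next I would rule out triple points. A triple point on $Z_{2,1}$ corresponds to three points of $Y_{2,1}$ collinear in $\pr^5$ along a line meeting~$L_{2,1}$. The key observation is that $Y_{2,1}=\eta_{2,1}(\pp)$ sits inside the Segre variety $\pr^2\times\pr^1\subseteq\pr^5$ as $C\times\pr^1$, with $C\subseteq\pr^2$ a smooth conic. A short rank computation on decomposable tensors $c_i\otimes q_i$ shows that three distinct such tensors are collinear in $\pr^5$ only if all three $c_i$ coincide, so the three points lie on a single ruling line $\{c\}\times\pr^1\subseteq Y_{2,1}$ (otherwise the $3\times 3$ matrix with columns $c_i$ would be singular, which is impossible because three distinct points of the conic $C$ are never collinear in $\pr^2$). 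Hence every trisecant of $Y_{2,1}$ lies in $Y_{2,1}$, and by Lemma~\ref{lem:partials_22} no such line meets~$L_{2,1}$.

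To identify the double curve I would apply adjunction to the normalization $\delta_{2,1}\colon\pp\to Z_{2,1}$: since triple points have been ruled out, $Z_{2,1}$ has only ordinary double-curve singularities, so the standard formula $K_{\pp}\sim\delta_{2,1}^{*}K_{Z_{2,1}}-\widetilde{D}$ applies, with $\widetilde{D}\subseteq\pp$ the preimage of the double curve $D\subseteq Z_{2,1}$. A quartic in $\pr^3$ has trivial canonical class, and $K_{\pp}\sim(-2,-2)$, so $\widetilde{D}\sim(2,2)$. Then $\deg\widetilde{D}=(2,2)\cdot(2,1)=6$ in $\pr^5$, and since $\delta_{2,1}|_{\widetilde{D}}$ is two-to-one onto~$D$, we obtain $\deg D=3$.

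Finally I would identify $D$ as a twisted cubic. For generic $f$ the curve $\widetilde{D}$ is a smooth $(2,2)$-curve, hence an elliptic curve (arithmetic genus~$1$); Riemann--Hurwitz for $\widetilde{D}\to D$ leaves only the two possibilities $(g(D),\#\text{branch points})\in\{(0,4),(1,0)\}$. The second case, an \'etale double cover of a plane elliptic cubic, is a closed non-generic condition on~$f$; in the first case $D\cong\pr^1$ is a smooth rational cubic in~$\pr^3$, and since a smooth plane cubic has genus~$1$, the curve $D$ cannot lie in a plane and is thus a twisted cubic. The main obstacle will be making the adjunction step rigorous: one must verify that for generic $f$ the singularities of $Z_{2,1}$ along~$D$ are really of ordinary double-curve type (no pinch or worse contributions to the canonical formula), which should follow from Steps~$1$--$2$ combined with the reducedness and generic smoothness of~$\widetilde{D}$.
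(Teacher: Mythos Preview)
Your triple-point argument has a real error. Projection from a \emph{line} $L_{2,1}\subseteq\pr^5$ has fibres that are planes through $L_{2,1}$, not lines meeting $L_{2,1}$; three points of $Y_{2,1}$ map to the same point of $Z_{2,1}$ exactly when they lie in a common plane containing $L_{2,1}$, and there is no reason for them to be collinear. Your tensor-rank computation disposes only of collinear triples, leaving untreated the main case in which the three points span a plane that happens to contain $L_{2,1}$. The paper handles this via a different observation: any length-$3$ subscheme $z$ of $\pp$ lies on a $(1,1)$-curve $C$, and $\eta_{2,1}(C)$ is a twisted cubic spanning a $\pr^3$; if $z$ were mapped to a point then $L_{2,1}$ would lie in the span of $\eta_{2,1}(z)$ and hence in that $\pr^3$, forcing $C$ to project to a line in $Z_{2,1}$, which Corollary~\ref{cor:morphism} forbids.

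For the double curve, your adjunction computation of the class $(2,2)$ for $\widetilde D$ agrees with the paper's use of the double-point formula. But your identification of $D$ as a twisted cubic via Riemann--Hurwitz has further gaps beyond the one you flag: you assume $\widetilde D$ is smooth and irreducible and that $D$ is irreducible, none of which is proved, and ``closed non-generic condition'' is not an argument that the \'etale elliptic case actually fails for the given general~$f$. The paper instead first shows that a general plane section of $Z_{2,1}$ is a rational plane quartic and hence has three nodes, so $\operatorname{Sing}(Z_{2,1})$ is a cubic curve spanning $\pr^3$; it then rules out any line component of $D$ directly: such a line would pull back either to a conic (forcing $L_{2,1}$ to meet a plane of $W_{2,1}$) or to two skew ruling lines (forcing $L_{2,1}$ into their common $\pr^3$, whereupon a residual-conic argument again produces a forbidden intersection of $L_{2,1}$ with $W_{2,1}$), both contradicting Lemma~\ref{lem:partials_22}. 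With no line component, a cubic curve spanning $\pr^3$ is a twisted cubic.
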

\begin{proof}
A scheme~$z$ of length~$3$ in~$\pp$ is contained in a $(1,1)$-curve $C$, so if 
$z$ is mapped to a point by $\delta_{2,1}=\pi_{2,1}\circ \eta_{2,1}$ the 
projection center~$L_{2,1}$ is contained in the span of $\eta_{2,1}(z) \subseteq 
Y_{2,1}$, and hence in the span of the image of~$C$ in~$Y_{2,1}$. But then $C$ 
is mapped to a line in~$Z_{2,1}$, and this is excluded by 
Corollary~\ref{cor:morphism}. Therefore $Z_{2,1}$ has no triple points. A 
general plane section of~$Z_{2,1}$ is the image of a smooth rational quartic 
curve in~$Y_{2,1}$; therefore it is a rational quartic curve, and so has $3$ 
singular points that span a plane. Hence $\operatorname{Sing}\left( Z_{2,1} 
\right)$ spans~$\pr^3$ and is a cubic curve. From the double point formula 
(see~\cite{fulton1998}*{Theorem~9.3}) we see that the double point locus of the 
restriction~$\pi_{2,1}|_{{Y_{2,1}}}$ is a curve on~$Y_{2,1}$ of degree~$6$ 
because it is linearly equivalent to $-K_{Y_{2,1}}$ (the anticanonical divisor 
of~$Y_{2,1}$). If~$\,\operatorname{Sing}\left( Z_{2,1} \right)$ is not a twisted 
cubic, then it has to contain a line. The preimage under~$\pi_{2,1}$ of such a 
line is then either a conic~$C$ or two skew lines~$E_1$ and $E_2$. In the first 
case the center~$L_{2,1}$ of~$\pi_{2,1}$ intersects~$\spn(C)$, but this is not 
possible because of Lemma~\ref{lem:partials_22}. In the second case we have 
$L_{2,1} \subseteq \spn (E_1 \cup E_2)$. Since $\spn (E_1 \cup E_2) \cong 
\pr^3$, there is a pencil of hyperplanes in~$\pr^5$ containing it; each of them 
intersects~$Y_{2,1}$ in~$E_1 \cup E_2$ and in a residual conic~$D$. Hence we 
obtain a pencil of conics~$D$ such that $\spn(D)$ intersects~$\spn (E_1 \cup 
E_2)$ in a line~$F$. In this way we get a pencil of lines~$F$ in~$\spn (E_1 \cup 
E_2)$; such pencil fills a quadric in~$\spn (E_1 \cup E_2)$, and therefore the 
center~$L_{2,1}$ intersects this quadric in $2$ points; this situation is again 
ruled out by Lemma~\ref{lem:partials_22}. Therefore the only possibility left is 
that $\operatorname{Sing}\left( Z_{2,1} \right)$ is a twisted cubic.
\end{proof}

\begin{remark}
\label{remark:collinear}
Consider a smooth scheme $[\Gamma] \in \VPS([f],4)$ apolar to~$f$, namely 
$\I_{\Gamma}\subseteq f^\bot$. Notice that the dimension of~$I_{\Gamma, (2,1)}$ 
equals the number of linearly independent planes in~$\pr^3$ passing 
through~$\delta_{2,1}(\Gamma)$. Moreover $2 \leq \dim I_{\Gamma, (2,1)} \leq 3$, 
where the latter inequality follows since $\delta_{2,1}$ is defined on the 
whole~$\pp$. Lemma~\ref{lem:singular_locus} excludes that the dimension 
of~$I_{\Gamma, (2,1)}$ is~$3$, since in that case we would have 
$\delta_{2,1}(\Gamma) = \{ \mathrm{pt} \}$. Hence $\dim 
I_{\Gamma, (2,1)} = 2$ and $\delta_{2,1}(\Gamma)$ spans a line.
\end{remark}

Remark~\ref{remark:collinear} yields a rational map defined on 
smooth apolar schemes:
\begin{equation}
\label{eq:birational_22}
  \begin{array}{rccc}
    \Phi_{2,1} \colon & \VPS([f],4) & \dashrightarrow & G \left( 2,f_{2,1}^\bot 
\right) \\
    & [\Gamma] & \mapsto & \I_{\Gamma, (2,1)}
  \end{array}
\end{equation}

\begin{lemma}\label{lem:map22}
For a general bihomogeneous form~$f$ of bidegree~$(2,2)$, the rational 
map~$\Phi_{2,1}$ in~\eqref{eq:birational_22} extends to a morphism on 
the whole $\VPS([f],4)$. Let $D_{2,1}$ be the curve 
\begin{equation}
\label{eq:D}
 D_{2,1} \, = \, \left\{ [\ell] \in G \left( 2,f_{2,1}^\bot \right) \, \colon 
\, 
\ell \subseteq Z_{2,1} \right\} \quad \text{where} \quad Z_{2,1} = \delta_{2,1} 
\left( \pp \right).
\end{equation}
Then the fiber over a 
point~$p$ under~$\Phi_{2,1}$ is a smooth rational curve if $p \in D_{2,1}$ and 
it is at most one point when $p \not\in D_{2,1}$. In particular, $\Phi_{2,1}$ 
is birational.
\end{lemma}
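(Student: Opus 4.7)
The plan is to prove, in order, that $\Phi_{2,1}$ extends to a morphism, that its fibers behave as claimed in the two cases distinguished by Lemma~\ref{lem:pencils_21}, and then to deduce birationality. To extend $\Phi_{2,1}$ to a morphism on all of $\VPS([f],4)$, I will show that $\dim I_{\Gamma,(2,1)}=2$ for every $[\Gamma]\in\VPS([f],4)$, beyond the smooth case already handled in Remark~\ref{remark:collinear}. The lower bound $\dim I_{\Gamma,(2,1)}\ge 2$ is automatic from $\operatorname{length}(\Gamma)=4$ and $\dim T_{2,1}=6$. The inclusion $I_{\Gamma,(2,1)}\subseteq f^{\bot}_{2,1}$ together with base-point freeness of $f^{\bot}_{2,1}$ (Corollary~\ref{cor:morphism}) forces $\dim I_{\Gamma,(2,1)}\le 3$. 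The value~$3$ is excluded by the observation that then $\delta_{2,1}(\Gamma)$ would lie in three independent hyperplanes of $\pr^3$, hence be supported at a single reduced point of $Z_{2,1}$, so $\Gamma$ would be contained in one scheme-theoretic fiber of $\delta_{2,1}$; but $\delta_{2,1}$ is finite and birational and $Z_{2,1}$ has no triple points by Lemma~\ref{lem:singular_locus}, so every such fiber has length at most~$2$, contradicting $\operatorname{length}(\Gamma)=4$.

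Once $\Phi_{2,1}$ is a morphism, the fiber analysis proceeds as follows. For any $[\Gamma]\in\Phi_{2,1}^{-1}([V_p])$, the equality $V_p=I_{\Gamma,(2,1)}$ holds since both sides are $2$-dimensional with one contained in the other, so $\Gamma$ lies in the base locus of the pencil $V_p$. If $[V_p]\notin D_{2,1}$, Lemma~\ref{lem:pencils_21} says this base locus is a length-$4$ scheme, which therefore coincides with $\Gamma$, proving that $\Phi_{2,1}^{-1}([V_p])$ has at most one point. Since the general apolar $\Gamma$ has $I_{\Gamma,(2,1)}$ with no common $(1,0)$-component, the generic fiber of $\Phi_{2,1}$ is a single point, yielding birationality.

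For $[V_p]\in D_{2,1}$, Lemma~\ref{lem:pencils_21} lets me write $V_p=\ell\cdot\langle h_1,h_2\rangle$ with $L=V(\ell)$ a $(1,0)$-line and $h_1,h_2\in T_{1,1}$; for general $[V_p]$ the forms $h_1,h_2$ have two residual common zeros $p_1,p_2\in\pp\setminus L$, so the set-theoretic base locus of $V_p$ is $L\cup\{p_1,p_2\}$. A short dimension count (any length-$4$ $\Gamma$ with $\ge 3$ points on $L$ has $\dim I_{\Gamma,(2,1)}\ge 3$, already excluded) forces $\Gamma=\gamma\cup\{p_1,p_2\}$ with $\gamma\subseteq L$ of length~$2$. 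The apolarity of such $\Gamma$ reduces to placing $f$ in the span of an ambient locus via the chain
\[
  I_{L\cup\{p_1,p_2\},\,(2,2)}\;=\;T_{0,1}\cdot V_p\;\subseteq\;T_{0,1}\cdot f^{\bot}_{2,1}\;=\;f^{\bot}_{2,2},
\]
where the first equality uses $\langle h_1,h_2\rangle=I_{\{p_1,p_2\},(1,1)}$ and $T_{0,1}\cdot I_{\{p_1,p_2\},(1,1)}=I_{\{p_1,p_2\},(1,2)}$, while the last equality is Lemma~\ref{lem:orthogonal_biquadratic}. This yields $f\in\spn\nu_{2,2}(L\cup\{p_1,p_2\})$, equivalently that the projection $\bar f$ of $f$ away from $\nu_{2,2}(p_1),\nu_{2,2}(p_2)$ lies in the plane spanned by the projected conic $\nu_{2,2}(L)$. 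Apolarity of $\Gamma=\gamma\cup\{p_1,p_2\}$ then translates into the condition that the chord of this conic determined by $\gamma$ contain $\bar f$, and the pencil of lines through $\bar f$ in that plane parameterizes the valid $\gamma$'s, giving $\Phi_{2,1}^{-1}([V_p])\cong\pr^1$.

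The main obstacle I anticipate is this last step, namely identifying the $\pr^1$ of chords with the fiber as schemes rather than merely as sets of points: one must check that non-reduced length-$2$ subschemes $\gamma$ (corresponding to tangent chords at $\bar f$) give honest points of $\Phi_{2,1}^{-1}([V_p])$ and that the exceptional fiber is smoothly embedded in $\VPS([f],4)$ at such degenerations.
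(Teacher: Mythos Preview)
Your extension argument has a circular dependency. You invoke the inclusion $I_{\Gamma,(2,1)}\subseteq f^{\bot}_{2,1}$ for every $[\Gamma]\in\VPS([f],4)$ in order to bound $\dim I_{\Gamma,(2,1)}\le 3$ and then to interpret the three forms as hyperplanes in~$\pr^3$. But that inclusion is precisely the statement that $\Gamma$ is apolar to~$f$, which in the paper is Proposition~\ref{prop:all_apolar}, and whose proof explicitly uses Lemma~\ref{lem:map22}. For a limit point $[\Gamma]$ you know nothing a priori beyond $\Gamma\in\Hilb_4(\pp)$; in particular $\dim I_{\Gamma,(2,1)}$ could exceed~$2$ (e.g.\ if a length-$3$ subscheme of~$\Gamma$ lies on a $(1,0)$-line), and even if it equals~$2$ there is no reason yet for $I_{\Gamma,(2,1)}$ to sit inside~$f^{\bot}_{2,1}$. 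The paper sidesteps this by arguing with $I_{\Gamma}\cap f^{\bot}_{2,1}$ rather than $I_{\Gamma,(2,1)}$: the condition $\dim\bigl(I_{\Gamma}\cap f^{\bot}_{2,1}\bigr)\ge 2$ is closed in the Hilbert scheme (this is what ``collinearity is closed'' means), hence passes to limits, and your fiber-length argument via Lemma~\ref{lem:singular_locus} then rules out $\dim\bigl(I_{\Gamma}\cap f^{\bot}_{2,1}\bigr)\ge 3$. Rewriting your first paragraph in terms of $I_{\Gamma}\cap f^{\bot}_{2,1}$ repairs the logic; as written it is circular.

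A smaller issue: in your analysis of the fiber over $[V_p]\in D_{2,1}$ you restrict to \emph{general} $[V_p]$, assuming the residual pencil $\langle h_1,h_2\rangle$ has two isolated base points off~$L$. The lemma asserts the fiber is a smooth rational curve for \emph{every} $p\in D_{2,1}$, so you still owe an argument for the degenerate cases. The paper handles this by packaging the fiber computation into Lemma~\ref{lem:2pencils}, which works uniformly over all reducible $(2,1)$-curves $C_g$ via the geometry of the $(2,2)$-embedding of~$C_g$ rather than via the base locus of~$V_p$. Your chord-through-$\bar f$ picture is essentially the same construction seen from the other end, and is fine for the generic fiber and hence for birationality; it just does not yet cover all of~$D_{2,1}$.
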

\begin{proof}
Since being collinear (see Remark \ref{remark:collinear}) is a closed 
property, $\Phi_{2,1}$ extends to the closure of smooth apolar schemes, namely 
to~$\VPS([f],4)$. 

Let $[\Gamma] \in \VPS([f],4)$ and let $\ell_{\Gamma}$ be the line in~$\pr^3$ 
containing $\delta_{2,1}(\Gamma)$. If $[\ell_{\Gamma}] \not\in D_{2,1}$, then 
$\ell_{\Gamma} \cap Z_{2,1}$ is a scheme of length~$4$, namely it 
is~$\delta_{2,1}(\Gamma)$. Hence the fiber over $\ell_{\Gamma}$ is 
exactly~$[\Gamma]$. Therefore, if $\ell \subseteq \pr^3$ is any line not 
contained in~$Z_{2,1}$, then either $[\ell]$ is not in the image 
of~$\Phi_{2,1}$, or it is the image of exactly one scheme in~$\VPS([f],4)$.

Let $\ell \subseteq \pr^3$ be a line contained in~$Z_{2,1}$. Denote by~$\ell'$ 
the $(1,0)$-line in~$\pp$ such that $\delta_{2,1}(\ell') = \ell$. Since the 
preimage of~$\,\operatorname{Sing}(Z_{2,1})$ under the projection is linearly 
equivalent to the anticanonical divisor of~$Y$, and so every line in~$Y_{2,1}$ 
intersects it in two points, then $\ell$ intersects 
$\operatorname{Sing}(Z_{2,1})$ in $2$ points. In fact, by 
Corollary~\ref{cor:morphism}, every line in~$Z_{2,1}$ is a projection of a line 
in~$Y$. We know that the preimage of $\ell \cap \mathrm{Sing}(Z_{2,1})$ 
under~$\delta_{2,1}$ consists of a scheme of length $4$ that intersects $\ell'$ 
in a subscheme of length~$2$. Summing up, $\delta_{2,1}^{-1} (\ell) = \ell' 
\cup \{z_{\ell}\}$, where $z_{\ell}$ is a scheme of length~$2$ that is mapped 
to~$\ell$ by~$\delta_{2,1}$.

Let $\Gamma \subseteq \pp$ be an apolar scheme such that $\delta_{2,1}(\Gamma)$ 
is contained in~$\ell$. Since $\Gamma$ has length~$4$ and $\Gamma \subseteq 
\delta_{2,1}^{-1} (\ell)$, then the span of $\eta_{2,1}(\Gamma)$ must 
contain~$\ell'$. On the other hand, since $\Gamma$ is apolar to~$f$, it is also 
apolar to both $\partial f / \partial y_0$ and $\partial f / \partial y_1$. 
Therefore the span of $\eta_{2,1}(\Gamma)$ contains the line~$L_{2,1}$, the 
center of the projection~$\pi_{2,1}$ in diagram~\eqref{eq:delta}. This rules 
out the case $\spn \, \Gamma \cong \pr^1$ and $\spn \, \Gamma \cong 
\pr^2$, since in both cases the center of the projection would 
intersect~$Y_{2,1}$, contradicting Corollary~\ref{cor:morphism}. Hence 
$\eta_{2,1}(\Gamma)$ spans a~$\pr^ 3$ in $\pr^5$, and therefore there is a 
length~$2$ subscheme of~$\Gamma$ that is not contained in $\ell'$. Clearly, 
this subscheme coincides with~$z_{\ell}$. 

Let us consider a plane in~$\pr^3$ through~$\ell$. By construction of the 
map~$\ell$, such plane defines (up to scalars) a form $g \in f^{\bot}_{2,1}$ 
that factors as $g = l \tth \tilde{g}$, where $l$ is a $(1,0)$-form whose 
vanishing locus in~$\pp$ is~$\ell'$. If $g$ vanishes on a scheme $\Gamma$ of 
length~$4$, that is apolar to $f$ and is mapped to $\ell$ by $\delta_{2,1}$, 
then $z_{\ell} \subseteq \Gamma$ and $\tilde{g}$ must vanish on the length~$2$ 
subscheme~$z_{\ell}$. In fact there is a pencil of $(1,1)$-forms vanishing 
on~$z_{\ell}$ that together with~$l$ vanish on~$\Gamma$. Therefore every 
subscheme in the fiber over a $\Phi_{2,1}(\Gamma)$ contains~$z_{\ell}$ and is 
contained in the reducible curve $\ell' \cup Z(\tilde{g})$.

The set of apolar subschemes of length~$4$ in $\ell'\cup Z(\tilde{g})$ are 
described in the following lemma.

\begin{lemma}\label{lem:2pencils}
Let $g\in f_{2,1}^\bot$ such that $g = l\tth\tilde{g}$ for forms~$l$ 
and~$\tilde{g}$ of bidegree~$(1,0)$ and~$(1,1)$ respectively. Then the zero 
locus~$C_g$ of~$g$ supports two pencils of length~$4$ apolar schemes. One pencil 
has a common subscheme of length $2$ on $Z(\tilde{g})$ and a moving subscheme of 
length~$2$ on~$Z(l)$, while the subschemes of the other pencil has a unique )
common point on~$Z(l)$, and a moving subscheme of length $3$ on $Z(\tilde{g})$.
\end{lemma}
\begin{proof}
The fact that $g$ is apolar to~$f$ means that the point~$[f]$ is contained in 
the span of the $(2,2)$-embedding of~$C_g$. To avoid redundant notation, we also 
denote by~$C_g$ the curve $\nu_{2,2}(C_g)$. By construction, 
$C_g$ splits as $C_g = C_1\cup C_2$ where $C_1$ is a conic and $C_2$ is a 
quartic rational normal curve. Notice that $\spn \, C_1 \cong \pr^2$ and $\spn 
\, C_2 \cong \pr^4$. Consider the projection from~$[f]$, denoted by $\rho \colon 
\spn \tth C_g \cong \pr^6 \dashrightarrow \pr^5$. Notice that $\rho|_{\spn\, 
C_1}$ and $\rho|_{\spn\, C_2}$ are isomorphisms, because otherwise $C_1$ or 
$C_2$ will be apolar to $f$ which contradicts Lemma 
\ref{lem:orthogonal_biquadratic}. Set $P$ to be the preimage under~$\rho$ of the 
line $\rho \bigl( \spn \, C_1 \bigr) \cap \rho \bigl( \spn \, C_2 \bigr)$, and 
define the lines $L_1 = P \cap \spn \, C_1$ and $L_2 = P \cap \spn \, C_2$. 

Consider the line~$L_1$: by construction, it passes through~$Q = C_1 \cap 
C_2$, the only singularity of~$C_g$, and intersects the conic~$C_1$ in 
another point~$T$. The line through~$[f]$ and~$T$ is contained in the plane~$P$, 
thus intersects~$L_2$ in a point~$\tilde{T}$. Since $C_2$ is a smooth rational 
quartic, the set
\[
  \bigl\{ \mathrm{trisecant\ planes\ of\ } C_2 \mathrm{\ passing\ through\ } 
\tilde{T} \bigr\}.
\]
corresponds to the variety of sum of powers of a quartic bivariate form 
decomposed into three summands, and by Lemma~\ref{rat} it is 
isomorphic to~$\pr^1$. If we pick the three points of intersection of such a 
trisecant plane with~$C_2$ and we add the point~$T$, we obtain four points 
whose span contains~$[f]$. Thus, we have constructed a~$\pr^1$ of schemes 
of length~$4$ apolar to~$f$ constituted of $3$ points lying on~$C_2$, and one 
common
point lying on~$C_1$.

Consider the line~$L_2$: by construction, it passes through the singularity~$Q$, 
and it intersects the secant variety of~$C_2$ (a cubic threefold) in another 
point~$R$. The line through~$[f]$ and~$R$ is contained in the plane~$P$, thus 
intersects~$L_1$ in a point~$\tilde{R}$. Since $R$ is in the secant variety 
of~$C_2$, there exist two points in~$C_2$ whose span contains~$R$. Moreover, 
there is a pencil of lines in~$\spn \, C_1$ passing through~$\tilde{R}$, which 
defines a pencil of length~$2$ schemes on~$C_1$. So we get a pencil of 
length~$4$ schemes apolar to~$f$, all of them with a length~$2$ scheme on~$C_2$ 
in common. 
\end{proof}

To complete the proof of Lemma~\ref{lem:map22} we apply 
Lemma~\ref{lem:2pencils} to $\ell'\cup Z(\tilde{g})$: only the pencil of 
apolar subschemes with a fixed subscheme of length~$2$ on~$Z(\tilde{g})$ is 
mapped to $\ell$ by $\delta_{2,1}$.
\end{proof}

\begin{lemma}\label{lem:3pencils}
Let $g\in f_{2,1}^\bot$ such that $g = l_1\tth l_2\tth \tilde{l}$ for 
forms~$l_i$ of bidegree~$(1,0)$ and~$\tilde l$ of bidegree~$(0,1)$. Then the  
zero locus~$C_g$ of~$g$ supports three pencils of length~$4$ apolar schemes. 
Two of them are fibers of $\Phi_{2,1}$, while $\Phi_{2,1}$ 
maps the third isomorphically to a line in~$G(2,f_{2,1}^{\bot})$.
\end{lemma}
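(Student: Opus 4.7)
The plan is to view $\{\Gamma\in\VPS([f],4):\Gamma\subseteq C_g\}$ as the preimage under $\Phi_{2,1}$ of the Schubert variety $\sigma_1([g])\cong\pr^2$ of $2$-dimensional subspaces of $f_{2,1}^\bot$ containing $g$, and then to exhibit three irreducible components.

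The first two pencils come from Lemma~\ref{lem:map22}. By Corollary~\ref{cor:morphism}, each $(1,0)$-line component $\ell_i'=Z(l_i)$, $i=1,2$, of $C_g$ is sent by $\delta_{2,1}$ to a line in $Z_{2,1}$, yielding a point $q_i\in D_{2,1}\cap\sigma_1([g])$ whose associated $2$-dimensional subspace $W_i=l_iT_{0,1}\cap f_{2,1}^\bot$ contains $g=l_i\cdot(l_{3-i}\tilde l)$. Lemma~\ref{lem:map22} then produces smooth rational fibers $\alpha_i=\Phi_{2,1}^{-1}(q_i)$, and by construction these are contained in $C_g$; they furnish two of the three pencils.

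For the third pencil, set $\tilde\Pi=W_1+W_2\subseteq f_{2,1}^\bot$. A direct computation gives $W_1\cap W_2=l_1l_2T_{0,1}\cap f_{2,1}^\bot=\spn g$ for generic $f$, so $\dim\tilde\Pi=3$. For $[g']\in\pr(\tilde\Pi)$ outside $\pr(W_1)\cup\pr(W_2)$, the forms $g$ and $g'$ share no common component, so $\Gamma(g'):=Z(g,g')\subseteq C_g$ is a length-$4$ scheme; I would verify apolarity of $\Gamma(g')$ by showing $I_{\Gamma(g'),(2,2)}\subseteq H_f$. As $[g']$ varies in $\pr(\tilde\Pi/\spn g)\cong\pr^1$ we obtain the third pencil. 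By Remark~\ref{remark:collinear} one has $I_{\Gamma(g'),(2,1)}=\spn(g,g')$, so $\Phi_{2,1}$ sends the third pencil to the $\pr^1$ of $2$-dimensional subspaces contained in $\tilde\Pi$ and containing $g$, which is exactly the line $\overline{q_1q_2}\subseteq\sigma_1([g])$. Since $\sigma_1([g])$ is a linearly embedded $\pr^2$ in the Plücker $\pr^5$, $\overline{q_1q_2}$ is a genuine line in $G(2,f_{2,1}^\bot)$, and $[g']\mapsto\Gamma(g')$ is an isomorphism onto it.

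The main difficulty I anticipate is the apolarity verification for $\Gamma(g')$: the immediate inclusion $T_{0,1}\cdot\spn(g,g')\subseteq H_f$ accounts only for $4$ of the $5$ dimensions of $I_{\Gamma(g'),(2,2)}$, and the missing generator must be shown to land in $H_f$ through a separate argument, most likely by exploiting the factor $\tilde l$ of $g$ together with the symmetric structure of the bidegree-$(1,2)$ ideal of $\Gamma(g')$.
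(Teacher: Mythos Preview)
Your treatment of the two fiber pencils is correct and in fact slightly cleaner than the paper's: you identify $q_i=[W_i]\in D_{2,1}\cap\sigma_1([g])$ directly and invoke Lemma~\ref{lem:map22}, whereas the paper first constructs these pencils geometrically and only afterwards recognises them as fibers. (Note the typo: you want $W_i=l_iT_{1,1}\cap f_{2,1}^\bot$, not $l_iT_{0,1}$.) The inclusion $\alpha_i\subseteq C_g$ is immediate because $g\in W_i=I_{\Gamma,(2,1)}$ forces $\Gamma\subseteq Z(g)$.

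The genuine gap is precisely where you locate it: apolarity of $\Gamma(g')$ for $g'\in\tilde\Pi$. The paper does \emph{not} attempt a direct verification of $I_{\Gamma(g'),(2,2)}\subseteq H_f$. Instead it works inside $\spn\,\nu_{2,2}(C_g)\cong\pr^6$, where $C_g$ becomes three conics $C_1,C_2,\tilde C$, and constructs the third pencil by a projection argument from~$[f]$ (as in Lemma~\ref{lem:2pencils}): one finds fixed points $R_1\in C_1$, $R_2\in C_2$ and a moving length-$2$ scheme on~$\tilde C$ whose span passes through~$[f]$. Apolarity is then automatic. Only afterwards does the paper read off the image under~$\Phi_{2,1}$, observing that $\delta_{2,1}(R_1)=\delta_{2,1}(R_2)$ is a single point~$S$ of $\operatorname{Sing}(Z_{2,1})$, so the pencil of apolar schemes lands on the pencil of lines through~$S$, i.e.\ a line in $G(2,f_{2,1}^\bot)$.

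Your algebraic route can be made to agree with this, but completing it requires showing that $I_{\Gamma(g'),(1,2)}=\tilde l\cdot I_{\{R_1,R_2\},(1,1)}$ lies in $f_{1,2}^\bot$, which unwinds to $I_{\{R_1,R_2\},(1,1)}=(\tilde l(f))_{1,1}^\bot$. That identity is true but not immediate from your definitions of $R_1,R_2$ (which are tied to $l_1(f),l_2(f)$ rather than $\tilde l(f)$), and you have not supplied it. Until that step is filled in, your construction produces a pencil of length-$4$ schemes on $C_g$ with the correct $(2,1)$-ideal, but not yet a pencil in $\VPS([f],4)$; in particular, invoking Remark~\ref{remark:collinear} or applying $\Phi_{2,1}$ to $\Gamma(g')$ is premature. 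The paper's geometric construction sidesteps this difficulty entirely by building apolarity in from the start.
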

\begin{proof}
The first part of the proof follows a similar argument as that in 
Lemma~\ref{lem:2pencils}, so 
we only provide a sketch. The $(2,2)$-embedding of~$C_g$ splits into three 
conics $C_1, C_2$ and $\tilde C$, such that $C_1\cap \tilde C = \{Q_1\}$, 
$C_2\cap \tilde C =\{ Q_2\}$ and $C_1\cap C_2 = \emptyset$.

By projecting from~$[f]$ one can prove that there exists a plane~$P$ such that 
$P \cap \spn\, C_1 =\ell_1$, $P \cap \spn (\tilde C\cup C_2 ) = \ell_2$ where 
$\ell_1, \ell_2$ are lines, and $P$ contains the line through $[f]$ and $Q_1$. 
The line $\ell_1$ meets $C_1$ in $Q$ and in another point $R_1$. The line 
through~$R_1$ and~$[f]$ meets~$\ell_2$ in a point~$T$. By a similar argument but 
projecting from $T$, we obtain a point $R_2\in C_2$ and a pencil of pairs of 
points $R_3, R_4 \in \tilde C$ such that $T$ is in~$\spn(\{R_2,R_3,R_4\})$. It 
follows that $[f]$ belongs to~$\spn(\{R_1,R_2,R_3,R_4\})$. In this way we find a 
pencil of apolar schemes with $R_1$ and $R_2$ as fixed points, one on each of 
the curves~$C_1$ and~$C_2$, and a moving part of length~$2$ on~$\tilde C$. 
On~$Z_{2,1}$, the curves~$C_1$ and~$C_2$ are mapped to lines by~$\delta_{2,1}$, 
while $\tilde C$ is mapped to a conic. Their union is a plane section of 
$Z_{2,1}$, and the apolar schemes are all collinear. Since $R_1$ and $R_2$ are 
mapped to the same point~$S$, this pencil of apolar schemes must lie on the 
pencil of lines through~$S$. Therefore the image of this pencil in 
$G(2,f_{2,1}^{\bot})$ is a line. Two other pencils of such schemes with mobile 
parts supported on~$C_1$ and~$C_2$ can be constructed similarly. Each of the 
latter two pencils is mapped to a line in~$Z_{2,1}$, namely the images of~$C_1$ 
and~$C_2$, and is therefore, by Lemma~\ref{lem:map22}, the fiber of the 
morphism~$\Phi_{2,1}$ over a point in~$D_{1,2}$. This concludes the proof.
\end{proof}

\begin{proposition}
\label{prop:biquadratic}
For a general bihomogeneous form~$f$ of bidegree~$(2,2)$ the image of the 
map~$\Phi_{2,1}$ defined in~\eqref{eq:birational_22} is a smooth linear 
section of 
the Grassmannian $G(2, f^{\bot}_{2,1})$.
\end{proposition}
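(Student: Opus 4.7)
The plan is to exhibit $Z=\Phi_{2,1}(\VPS([f],4))$ as an explicit hyperplane section of the Grassmannian $G(2,f^{\bot}_{2,1})\subseteq\pr(\wedge^2 f^{\bot}_{2,1})\cong\pr^5$ and then to verify its smoothness. Lemma~\ref{lem:map22} shows that $\Phi_{2,1}$ is birational, so $Z$ is irreducible of dimension~$3$, hence a divisor in the four-dimensional Grassmannian.

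The hyperplane arises from apolarity in bidegree $(2,2)$. For a general pencil $\ell=\langle g_1,g_2\rangle\subseteq f^{\bot}_{2,1}$, Lemma~\ref{lem:pencils_21} ensures that $\Gamma=V(\ell)$ is a length-$4$ complete intersection on $\pp$. Twisting the Koszul resolution of $\Gamma$ by $\mathcal{O}_{\pp}(2,2)$ gives
\[
0\longrightarrow\det(\ell)\otimes\mathcal{O}_{\pp}(-2,0)\longrightarrow\ell\otimes\mathcal{O}_{\pp}(0,1)\longrightarrow I_{\Gamma}(2,2)\longrightarrow 0,
\]
and the K\"unneth computation $H^1(\mathcal{O}_{\pp}(-2,0))\cong\C$ together with the long exact sequence in cohomology identifies $I_{\Gamma,(2,2)}$ as a $5$-dimensional space with canonical quotient $I_{\Gamma,(2,2)}/(\ell\cdot T_{0,1})\cong\det(\ell)$. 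The $4$-dimensional subspace $\ell\cdot T_{0,1}$ is automatically contained in $f^{\bot}_{2,2}$ thanks to the multiplication isomorphism $T_{0,1}\otimes f^{\bot}_{2,1}\cong f^{\bot}_{2,2}$ from Lemma~\ref{lem:orthogonal_biquadratic}, so apolarity of $\Gamma$ to $f$ reduces to the single condition that the class of $g_1\wedge g_2$ pairs to zero with $f$ through the composition $\det(\ell)\to T_{2,2}/(\ell\cdot T_{0,1})\to T_{2,2}/f^{\bot}_{2,2}\cong\C$. By functoriality in $\ell$ this pairing assembles into a linear form $\Omega_f$ on $\wedge^2 f^{\bot}_{2,1}$, giving the containment $Z\subseteq G(2,f^{\bot}_{2,1})\cap\{\Omega_f=0\}$; equality follows because both sides are irreducible of dimension $3$.

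The main obstacle is proving smoothness. Because $G(2,f^{\bot}_{2,1})$ is a smooth quadric threefold in $\pr^5$, the section $G(2,f^{\bot}_{2,1})\cap\{\Omega_f=0\}$ is singular precisely when the alternating bilinear form on $f^{\bot}_{2,1}\cong\C^4$ induced by $\Omega_f$ has rank at most $2$, that is, when the defining hyperplane is tangent to the Pl\"ucker quadric. The locus of $f\in S_{2,2}$ yielding a degenerate $\Omega_f$ is Zariski closed, so it suffices to exhibit one $f_0$ for which $\Omega_{f_0}$ has full rank $4$; I would do this by writing down a basis of $f_0^{\bot}_{2,1}$ explicitly in coordinates, computing the $4\times 4$ Gram matrix of $\Omega_{f_0}$, and verifying that its Pfaffian is nonzero. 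Semi-continuity then propagates the rank $4$ condition to all sufficiently general $f$, and a smooth hyperplane section of the Pl\"ucker quadric is by definition a smooth linear section of the Grassmannian.
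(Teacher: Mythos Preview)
Your approach is correct and genuinely different from the paper's. The paper computes the degree of the image $X_{2,1}$ via Schubert calculus: it writes $[X_{2,1}] = d\,\Sigma_1$ in the Chow ring of $G(2,4)$ and determines $d$ by intersecting with $\alpha$-planes $\Sigma_2(g)$, analyzing the apolar schemes supported on each curve $C_g = V(g)$. This case-by-case geometry (smooth $C_g$; $C_g$ a line plus a conic; $C_g$ three lines) is precisely what Lemmas~\ref{lem:2pencils} and~\ref{lem:3pencils} together with Sylvester's Lemma~\ref{rat} accomplish. For smoothness the paper argues that a singular linear complex would contain $\alpha$-planes, contradicting the curve computation. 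Your route sidesteps all of this by producing the linear form $\Omega_f$ directly from the Koszul resolution, and reduces smoothness to the rank of an explicit skew form. This is cleaner and more algebraic; the paper's approach has the compensating advantage that the pencil structure on each $C_g$ is reused later in the section.

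Two points need more care. First, the phrase ``by functoriality in $\ell$'' hides real content: the target $T_{2,2}/(\ell\cdot T_{0,1})$ varies with $\ell$, so one does not automatically get a fixed linear form. You must either globalize the Koszul sequence over $G(2,f^\bot_{2,1})$ and observe that the pairing is a section of $\det(\mathcal{S})^{*}=\mathcal{O}_G(1)$, or make it explicit: choose a splitting $g_i = t_0 a_i + t_1 b_i$ with $a_i\in T_{1,1}$, check that $(g_1,g_2)\mapsto \phi_{f,(2,2)}(a_1 b_2 - a_2 b_1)$ is bilinear and alternating on $T_{2,1}$, and note that the splitting-ambiguity lands in $\ell\cdot T_{0,1}\subseteq f^\bot_{2,2}$ so the restriction to $\wedge^2 f^\bot_{2,1}$ is well defined. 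Second, $G(2,f^\bot_{2,1})$ is a smooth quadric \emph{fourfold} in $\pr^5$, not a threefold; your rank argument for smoothness is unaffected once this slip is fixed. For the equality $Z = G\cap\{\Omega_f=0\}$ you should also remark that any hyperplane section of the Pl\"ucker quadric has rank at least $4$ as a quadric in $\pr^4$, hence is automatically irreducible.
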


\begin{proof}
Since $\dim \VPS ([f],4) = 3$, $\dim G(2,f_{2,1}^\bot) = 4$ and $\Phi_{2,1}$ is 
birational onto its image, the image is a hypersurface~$U$ 
in~$G(2,f_{2,1}^\bot)$. 

The degree~$3$ component of the Chow group of~$G(2,f_{2,1}^\bot)$ is generated 
freely by one Schubert class $\Sigma_1$, so $[U] = d \tth \Sigma_1$ for 
some~$d$. The intersection of an $\alpha$-plane $\Sigma_{2}$ with $\Sigma_{1}$ 
gives the only class~$\Sigma_{3}$ in degree~$1$ in the Chow group. Hence $[U] 
\cdot \Sigma_{2} = d \tth \Sigma_{3}$. We prove that $d = 1$.

Let us consider the intersection of~$U$ with an $\alpha$-plane $\Sigma_2$. 
Every $\alpha$-plane in ~$G(2,f_{2,1}^\bot)$ is of the form   
$\Sigma_2(g) = \{ \langle g_1, g_2\rangle \subseteq f_{2,1}^\bot \colon g\in 
\langle g_1, g_2\rangle \}$
for some $g\in f^\bot_{2,1}$. On the other hand, such a form~$g$ defines a 
rational curve~$C_g$, and its $(2,2)$-embedding in~$\pr^8$ 
has degree~$6$. Therefore, the intersection of~$U$ with $\Sigma_2(g)$
has preimage under~$\Phi_{2,1}$ given by
\begin{align*}
  \Phi_{2,1}^{-1}\bigl(U \cap \Sigma_2(g)\bigr) & = \bigl\{ [\Gamma] \in 
\VPS([f],4) \, : \, g \in 
I_{\Gamma, (2,1)} \bigr\} \\
  & = \bigl\{ [\Gamma] \in \VPS([f],4) \, : \, \Gamma \subseteq C_g \bigr\}.
\end{align*}

If $C_g$ is smooth, then by Lemma~\ref{rat} we derive 
that $\Phi_{2,1}^{-1}\bigl(U \cap \Sigma_2(g)\bigr) \cong \pr^1$. 

Consider now the case when $C_g$ is not smooth. If $C_g \subseteq \pp$ splits 
into the union of a line and a smooth conic (intersecting in a point), then 
$\nu_{2,2}\left(C_g\right) \subseteq \pr^8$ splits into a conic~$C_1$ and a 
quartic~$C_2$, both rational and smooth. In this case, $\Phi_{2,1}^{-1}\bigl(U 
\cap \Sigma_2(g)\bigr)$ has two irreducible components, both rational and 
smooth, by Lemma~\ref{lem:2pencils}. We claim that these are the only two 
components of maximal dimension of the scheme $\Phi_{2,1}^{-1}\bigl(U \cap 
\Sigma_2(g)\bigr)$. In fact, if there were $[\Gamma] \in \VPS([f],4)$ such that 
$\nu_{2,2}(\Gamma) \subseteq C_1$, then the conic~$C_1$ would be apolar 
to~$f$; this would imply that there is a nonzero element in~$f^{\bot}_{1,0}$, 
contradicting Lemma~\ref{lem:orthogonal_biquadratic}. An 
analogous argument excludes the possibility that $\nu_{2,2}(\Gamma) \subseteq 
C_2$. On the other hand, there is at most one scheme~$\Gamma$ formed by three 
points on~$C_1$ and one point~$B$ on~$C_2$. In fact, by construction $[f] \in 
\spn \, \Gamma$, thus the line through~$[f]$ and~$B$ intersects $\pr^2 = \spn \, 
C_1$, so it is contained in the plane~$P$ from Lemma~\ref{lem:2pencils}. Since 
$P \cap C_2$ is the singular point~$Q$, such line coincides with the line 
through~$[f]$ and~$Q$, and that means that we have at most one scheme~$\Gamma$ 
of this kind.

If $C_g \subseteq \pp$ splits into the union of three lines, then 
$\nu_{2,2}\left(C_g\right) \subseteq \pr^8$ splits into three conics~$C_1$, 
$C_2$ and $\tilde C$. In this case, $\Phi_{2,1}^{-1}\bigl(U \cap 
\Sigma_2(g)\bigr)$ has three irreducible components, all rational and smooth, 
but, by Lemma~\ref{lem:3pencils}, only one of them is not contracted 
by~$\Phi_{2,1}$.

Therefore, as the $\alpha$-planes vary, we obtain a family of smooth and 
rational curves. Hence the only possibility is that $U$ is a linear complex, 
i.e.\ a hyperplane section of~$G(2,f_{2,1}^\bot)$. 

Let $X_{2,1}$ be the image of~$\Phi_{2,1}$, we show that $X_{2,1}$ is smooth. 
Assume by contradiction that $X_{2,1}$ is singular. Then it contains two 
families of planes. In particular, it contains a family of $\alpha$-planes as 
planes in $G\left(2, f^{\bot}_{2,1}\right)$. But any $\alpha$-plane in 
$G\left(2, f^{\bot}_{2,1}\right)$ is of the form~$\Sigma_2(g)$ and 
intersects~$X_{2,1}$ in a curve, so cannot be contained in~$X_{2,1}$. This 
proves the claim.
\end{proof}

\begin{proposition}
\label{prop:all_apolar}
  Every $[\Gamma] \in \VPS([f],4)$ is apolar to~$f$.
\end{proposition}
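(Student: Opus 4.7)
The plan is to exploit the fact that apolarity is a Zariski closed condition on $\Hilb_4(\pp)$, so it automatically propagates from the defining locus of $\VPS([f],4)$ to its closure. By Section~\ref{prelim}, $\VPS([f],4)$ is the closure of the locus of smooth length-$4$ schemes $\Gamma$ with $f = \sum_{i=1}^4 l_{1i}^2 \tth l_{2i}^2$; by the duality between linear span and vanishing ideal each such $\Gamma$ tautologically satisfies $I_{\Gamma,(2,2)} \subseteq f^{\bot}_{2,2}$. The proof therefore reduces to showing that the apolar locus
\[
  \bigl\{ [\Gamma] \in \Hilb_4(\pp) \, : \, I_{\Gamma,(2,2)} \subseteq f^{\bot}_{2,2} \bigr\}
\]
is Zariski closed in $\Hilb_4(\pp)$.

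To verify this I would use the universal family $p \colon \mathcal{Z} \to \Hilb_4(\pp)$. Since $p$ is flat and proper with zero-dimensional fibres of length~$4$, the pushforward $p_*\mathcal{O}_\mathcal{Z}(2,2)$ is locally free of rank~$4$, so the kernel $\mathcal{K}$ of the evaluation
\[
  \mathrm{ev} \colon T_{2,2} \otimes_\C \mathcal{O}_{\Hilb_4(\pp)} \longrightarrow p_*\mathcal{O}_\mathcal{Z}(2,2)
\]
is a coherent subsheaf of the trivial bundle whose fibre at $[\Gamma]$ is $I_{\Gamma,(2,2)}$ (local freeness of $p_*\mathcal{O}_\mathcal{Z}(2,2)$ ensures the relevant $\mathrm{Tor}$ vanishes). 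Composing the inclusion $\mathcal{K} \hookrightarrow T_{2,2} \otimes \mathcal{O}$ with the canonical quotient $T_{2,2} \twoheadrightarrow T_{2,2}/f^{\bot}_{2,2} \cong \C$ gives a morphism of coherent sheaves $\phi \colon \mathcal{K} \to \mathcal{O}_{\Hilb_4(\pp)}$, whose fibre $\phi(\Gamma)$ is the linear functional $I_{\Gamma,(2,2)} \to \C$ induced by $f$. Apolarity amounts to $\mathrm{rk}\,\phi(\Gamma) = 0$, and by the upper semi-continuity of fibrewise rank for morphisms of coherent sheaves this locus is closed, concluding the argument.

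The main subtlety to negotiate is that $\mathcal{K}$ is not itself locally free: its rank jumps on the stratum of length-$4$ schemes failing to impose independent conditions on $(2,2)$-forms (for instance, four points lying on a $(1,0)$-curve), so any proof attempting to reduce to vanishing of matrix entries in a fixed trivialisation of $\mathcal{K}$ is doomed. The semi-continuity argument above applies uniformly to morphisms of arbitrary coherent sheaves and hence disposes of the boundary of $\VPS([f],4)$ without requiring a case-by-case analysis of the degenerate subschemes catalogued in Lemmas~\ref{lem:2pencils} and~\ref{lem:3pencils}. The same argument adapts verbatim to the analogous statements needed for the other two cases of Theorem~\ref{thm:main}.
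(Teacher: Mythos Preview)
Your argument has a genuine gap. The claim that the fibre of $\mathcal{K}$ at $[\Gamma]$ equals $I_{\Gamma,(2,2)}$ is not justified by local freeness of the \emph{target} $p_*\mathcal{O}_{\mathcal{Z}}(2,2)$: what you need is local freeness of the \emph{image} of $\mathrm{ev}$, and this fails exactly on the jumping stratum you describe. Concretely, if $\Gamma$ consists of four points on a $(1,0)$-line, the restriction $T_{2,2}\to H^0(\mathcal{O}_\Gamma(2,2))\cong\C^4$ has only $3$-dimensional image, so $\mathrm{ev}$ is not surjective there and the image sheaf is not locally free. At such a point the natural map $\mathcal{K}\otimes\kappa([\Gamma])\to T_{2,2}$ need not be injective, and its image $V_\Gamma$ is only a subspace of $I_{\Gamma,(2,2)}$, typically a proper one. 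Your closed locus is therefore $\{V_\Gamma\subseteq f^{\bot}_{2,2}\}$, which \emph{contains} the apolar locus but can be strictly larger; knowing that $\VPS([f],4)$ lies in this larger set does not yield $I_{\Gamma,(2,2)}\subseteq f^{\bot}_{2,2}$.

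The underlying phenomenon is that apolarity---equivalently the condition $[f]\in\spn\nu_{2,2}(\Gamma)$---is \emph{not} a closed condition on $\Hilb_4(\pp)$. The linear span can only shrink under specialization, so a limit of apolar schemes may lose $[f]$: four points degenerating onto a $(1,0)$-line have $\nu_{2,2}$-span a plane, and for general $f$ this plane does not contain $[f]$. The content of Proposition~\ref{prop:all_apolar} is precisely that such degenerations do not occur inside $\VPS([f],4)$, and this requires the geometric input of Lemmas~\ref{lem:pencils_21} and~\ref{lem:map22}: the paper uses that $\Phi_{2,1}$ and $\Phi_{1,2}$ extend to morphisms to force $\dim I_{\Gamma,(2,1)}=\dim I_{\Gamma,(1,2)}=2$ for every $[\Gamma]\in\VPS([f],4)$, ruling out the bad configurations case by case. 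Your semi-continuity argument would become valid only after one knows $\VPS([f],4)\subseteq\mathcal{U}$ (the locus where $\dim I_{\Gamma,(2,2)}=5$), but the paper establishes this in the Lemma inside Corollary~\ref{22smooth} \emph{using} Proposition~\ref{prop:all_apolar}, so invoking it here would be circular.
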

\begin{proof}
Let $[\Gamma] \in \VPS ([f],4)$, we have to show that $I_{\Gamma} \subseteq 
f^{\bot}$. By Lemma~\ref{lem:map22}, both $I_{\Gamma} \cap f^{\bot}_{2,1}$ and 
$I_{\Gamma} \cap f^{\bot}_{1,2}$ are two-dimensional. If both $I_{\Gamma, 
(2,1)}$ and $I_{\Gamma, (1,2)}$ are two-dimensional, then $I_{\Gamma} \subseteq 
f^{\bot}$. Suppose now that $I_{\Gamma, (2,1)}$ has dimension~$3$: then, by 
Lemma~\ref{lem:pencils_21}, the system of $(2,1)$-curves in~$I_{\Gamma} \cap 
f^{\bot}_{2,1}$ must have a common component, a $(1,0)$-line~$\ell$. Suppose 
first that $\ell$ is a common component of the whole system $I_{\Gamma, (2,1)}$. 
Then the residual $3$-dimensional family of $(1,1)$-curves can have at most one 
point in common. Therefore the line~$\ell$ contains a length~$3$ subscheme 
of~$\Gamma$. The image of~$\ell$ under $\delta_{1,2}$ is a conic, but this 
contradicts the fact that $\delta_{1,2}(\Gamma)$ is collinear. Suppose next that 
$\ell$ is not a common component of the linear system~$I_{\Gamma, (2,1)}$. Then 
$I_{\Gamma, (2,1)} = \left\langle g_1, g_2, g_3 \right\rangle$ with $g_3 \not\in 
I_{\Gamma} \cap f^{\bot}_{2,1}$, and $\ell$ and the zero locus of~$g_3$ 
intersect in a point. Let $g_1 = l \tth \tilde{g}_1$ and $g_2 = l \tth 
\tilde{g}_2$, where $l$ is the linear factor corresponding to $\ell$. Then a 
subscheme of length~$3$ of $\Gamma$ is contained in the zero-locus 
of~$\tilde{g}_1$ and~$\tilde{g}_2$. This forces $\tilde{g}_1$ and $\tilde{g}_2$ 
to have a common component~$\tilde{\ell}$, because otherwise their zero locus 
(being the intersection of two $(1,1)$-curves) would have length at most~$2$. 
Then $\tilde{\ell}$ is either a~$(1,0)$ or a $(0,1)$-line. In the first case, we 
can repeat the previous argument and apply~$\delta_{1,2}$ to~$\Gamma$, obtaining 
a contradiction; in the second case we use~$\delta_{2,1}$. The case 
with~$I_{\Gamma, (1,2)}$ is analogous.
\end{proof}

\begin{corollary}
\label{22smooth} 
For a general bihomogeneous form~$f$ of bidegree~$(2,2)$ the 
variety $\VPS ([f],4)$ is a smooth $3$-fold.
\end{corollary}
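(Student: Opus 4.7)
The plan is to combine the birational morphism $\Phi_{2,1}\colon\VPS([f],4)\to X_{2,1}$ from Lemma~\ref{lem:map22} with its symmetric analog $\Phi_{1,2}\colon\VPS([f],4)\to X_{1,2}$ and transport the smoothness of the two targets, established in Proposition~\ref{prop:biquadratic}, back to $\VPS([f],4)$.

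The first step is to apply Zariski's Main Theorem to $\Phi_{2,1}$. This morphism is proper since both source and target are projective, and by Lemma~\ref{lem:map22} it is birational with single-point fibers over $X_{2,1}\setminus D_{2,1}$. Since $X_{2,1}$ is smooth and therefore normal, ZMT implies that $\Phi_{2,1}$ restricts to an isomorphism over $X_{2,1}\setminus D_{2,1}$. Consequently, $\VPS([f],4)$ is smooth at every point outside the exceptional locus $E_{2,1}:=\Phi_{2,1}^{-1}(D_{2,1})$. Exchanging the roles of the two $\pr^1$ factors of $\pp$, the same argument applied to $\Phi_{1,2}$ yields smoothness outside $E_{1,2}:=\Phi_{1,2}^{-1}(D_{1,2})$.

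The remaining and most delicate step is to verify smoothness at points $[\Gamma]\in E_{2,1}\cap E_{1,2}$. For such $[\Gamma]$ the fibers of the two maps are distinct smooth rational curves by Lemma~\ref{lem:map22}: one parameterizes the length-$2$ subscheme of $\Gamma$ moving on a $(1,0)$-line of $\pp$, the other the length-$2$ subscheme moving on a $(0,1)$-line. These directions are linearly independent in $T_{[\Gamma]}\VPS([f],4)$, so the differential of the product morphism $\Phi_{2,1}\times\Phi_{1,2}\colon\VPS([f],4)\to X_{2,1}\times X_{1,2}$ is injective at $[\Gamma]$. I would then identify the image of this combined map as a smooth threefold inside the smooth six-fold $X_{2,1}\times X_{1,2}$, so that the product map becomes a local isomorphism and the desired smoothness of $\VPS([f],4)$ at $[\Gamma]$ follows.

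The hard part is precisely this last identification. Injectivity of the differential alone only gives $\dim T_{[\Gamma]}\VPS([f],4)\leq 6$, whereas smoothness requires sharpening this bound to $3$. Closing the gap will require either an explicit tangent space computation for $\VPS([f],4)\subseteq\Hilb_4(\pp)$ using the apolarity equations $I_{\Gamma,(2,1)}\subseteq f^{\bot}_{2,1}$ and $I_{\Gamma,(1,2)}\subseteq f^{\bot}_{1,2}$ guaranteed by Proposition~\ref{prop:all_apolar}, or a direct description of the image of $\Phi_{2,1}\times\Phi_{1,2}$ cut out by a regular sequence of equations encoding compatibility between the two projections.
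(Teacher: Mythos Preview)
Your proposal has a genuine gap, which you yourself identify: the argument for points of $E_{2,1}\cap E_{1,2}$ is incomplete. Injectivity of the differential of $\Phi_{2,1}\times\Phi_{1,2}$ only gives $\dim T_{[\Gamma]}\VPS([f],4)\leq 6$, and you offer no mechanism to cut this down to~$3$. Your two suggested fixes are both nontrivial. An explicit tangent-space computation at a scheme $\Gamma$ in the intersection locus would require a careful analysis of first-order deformations of $\Gamma$ inside $\Hilb_4(\pp)$ subject to the apolarity constraints, and these schemes are exactly the most degenerate ones (supported on a union of a $(1,0)$-line and a $(0,1)$-line). Identifying the image of $\Phi_{2,1}\times\Phi_{1,2}$ as a smooth threefold is what the paper eventually does in the proof of Theorem~\ref{thm:main}\ref{A}, but there the argument \emph{uses} the smoothness of $\VPS([f],4)$ (already established) to conclude that the bijective morphism $\Xi$ onto the graph~$\mathcal{Y}$ is an isomorphism. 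Running your argument in this direction would be circular.

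The paper takes a completely different and more uniform route. It realises $\VPS([f],4)$ as the zero locus of a section of a rank-$5$ vector bundle $E_{\mathcal{U}}$ over an open set $\mathcal{U}\subseteq\Hilb_4(\pp)$: the fibre over $[\Gamma]$ is $I_{\Gamma,(2,2)}^{*}$, and the section is induced by $\phi_{f,(2,2)}\colon T_{2,2}\to\C$. A short lemma shows $\VPS([f],4)\subseteq\mathcal{U}$ (i.e.\ every apolar $\Gamma$ spans a $\pr^3$ under $\nu_{2,2}$), and Proposition~\ref{prop:all_apolar} ensures that $\VPS([f],4)$ is exactly this zero locus. Since the sections obtained by varying $f\in S_{2,2}$ have no common basepoint on~$\mathcal{U}$, a Bertini-type argument gives smoothness for general~$f$ in one stroke, with no case analysis on the fibres of~$\Phi_{2,1}$ or~$\Phi_{1,2}$.
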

\begin{proof}
 First of all, the Hilbert scheme $\Hilb_4 \left( \pp \right)$ itself is smooth 
(see~\cite{Fogarty1968}). Consider next, in $\Hilb_4 \left( \pp \right)$, the 
open subset ${\mathcal U}$ of schemes $\Gamma$ whose ideal $I_{\Gamma} 
\subseteq T$ has codimension~$4$ in bidegree $(2,2)$. Over ${\mathcal U}$ we 
consider the rank~$5$ vector bundle $E_{\mathcal U}$ with fiber over a 
scheme~$[\Gamma]$ the dual of the space $I_{\Gamma,(2,2)} \subseteq T_{2,2}$ of 
$(2,2)$-forms in the ideal of~$\Gamma$. The linear form 
\[
  \phi_{f,(2,2)} \colon T_{2,2}\longrightarrow \C
\]
defines a section on $E_{\mathcal U}$. If $\VPS([f],4) \subseteq {\mathcal U}$, 
then $\VPS([f],4)$ is the $0$-locus of such section by 
Proposition \ref{prop:all_apolar}, since in the case of surfaces all apolar 
schemes are in the closure of smooth apolar schemes.

\begin{lemma}
 Let $f\in S_{2,2}$ be general, then $\VPS([f],4) \subseteq {\mathcal U}$.
\end{lemma}
\begin{proof}
It suffices to prove that for any $[\Gamma]\in \VPS([f],4)$, the space 
$I_{\Gamma,(2,2)}$ has dimension~$5$, or equivalently, the image 
$\nu_{2,2}(\Gamma)$ spans a $\pr^3$. For this, assume that $\nu_{2,2}(\Gamma)$ 
spans a plane $P_\Gamma$. 

If the intersection $P_\Gamma \cap \nu_{2,2} \left( 
\pp \right)$ is finite, $\Gamma$ is either curvilinear or it contains the 
neighborhood of a point. In the latter case, $P_\Gamma$ must be a tangent plane 
to $\nu_{2,2} \left( \pp \right)$, but a tangent plane intersects $\nu_{2,2} 
\left( \pp \right)$ only in a scheme of length~$3$, so this is impossible. If 
$\Gamma$ is curvilinear it is contained in a smooth hyperplane section of 
$\nu_{2,2}(\Gamma)$, an elliptic normal curve of degree~$8$. But on any such 
curve any subscheme of length~$4$ spans a~$\pr^3$, again a contradiction. 

Finally, if $P_\Gamma \cap \nu_{2,2} \left( \pp \right)$ is infinite, it 
contains a curve. But the only plane curves on $\nu_{2,2} \left( \pp \right)$ 
are conics, and they are the intersection of their span with $\nu_{2,2} \left( 
\pp \right)$. So in this case $\Gamma$ is contained in a $(0,1)$-curve or a 
$(1,0)$-curve. If $\Gamma$ is apolar to~$f$, this is impossible, so the lemma 
follows. 
\end{proof}

Taking all $\phi_{f,(2,2)}$ for $f\in S_{2,2}$ gives a linear space of 
sections of $E_{\mathcal U}$ without basepoints on~${\mathcal U}$, 
so for a general~$f$ the $0$-locus $\VPS([f],4)$ is smooth. This
proves Corollary~\ref{22smooth}.
\end{proof}

Theorem~\ref{thm:main}\ref{A} is now equivalent to the following:
\begin{theorem} 
For a general bihomogeneous form~$f$ of bidegree~$(2,2)$, the variety 
$\VPS([f],4)$ is isomorphic to the graph of the birational automorphism on a 
smooth quadric threefold~$Q$ given by the linear system 	of quadrics through a 
rational normal quartic curve in~$Q$.
\end{theorem}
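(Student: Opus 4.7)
The plan is to realize $\VPS([f],4)$ as the blow-up of a smooth quadric threefold $Q$ along a rational normal quartic curve, and then to invoke the classical description of such a blow-up as the graph of the birational self-map of $Q$ defined by the linear system of quadrics through the curve. The contraction is the morphism $\Phi_{2,1}$ from~\eqref{eq:birational_22}: by Proposition~\ref{prop:biquadratic} its image is a smooth hyperplane section of $G\bigl(2, f^{\bot}_{2,1}\bigr) \cong G(2,4)$, and since the Pl\"ucker embedding realizes $G(2,4)$ as a smooth quadric in $\pr^5$, the target $Q$ is a smooth quadric threefold in $\pr^4$.

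Next I would identify the curve $D_{2,1} \subseteq Q$ from~\eqref{eq:D} as a rational normal quartic. The curve $D_{2,1}$ parametrizes the rulings of the quartic scroll $Z_{2,1} = \delta_{2,1}(\pp) \subseteq \pr^3$, and by Corollary~\ref{cor:morphism} these rulings are precisely the images of the $(1,0)$-lines on $\pp$; hence $D_{2,1}$ is a smooth rational curve. To compute its Pl\"ucker degree I would intersect it with the Schubert cycle of lines meeting a generic line $\lambda \subseteq \pr^3$: since $\lambda$ meets the quartic surface $Z_{2,1}$ in four points and each lies on a unique ruling, the degree of $D_{2,1}$ in $\pr^5$ is four. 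Then I would verify that $D_{2,1}$ spans the $\pr^4$ containing $Q$, using Lemma~\ref{lem:partials_22} to rule out a degeneration of the projection center $L_{2,1}$ into the span of the rulings; hence $D_{2,1}$ is a rational normal quartic in $Q$.

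By Corollary~\ref{22smooth} the threefold $\VPS([f],4)$ is smooth, and by Lemma~\ref{lem:map22} the morphism $\Phi_{2,1}$ is birational with fibers either single points or smooth $\pr^1$'s according as the base point lies outside or inside $D_{2,1}$. A birational morphism between smooth threefolds that contracts a divisor onto a smooth curve with $\pr^1$-fibers is a blow-up of the base curve (by Ando's structure theorem, or by applying the universal property of blow-ups to the natural lift into the blow-up of $Q$ along $D_{2,1}$), so $\Phi_{2,1}$ identifies $\VPS([f],4)$ with the blow-up of $Q$ along $D_{2,1}$. The remaining step is the classical identification of this blow-up with the graph of the birational self-map of $Q$ defined by the linear system $|\mathcal{O}_Q(2) \otimes \mathcal{I}_{D_{2,1}}|$: this linear system has base scheme exactly $D_{2,1}$ and becomes a morphism after blowing it up. The main obstacle I anticipate is a clean verification that this linear system maps onto a quadric projectively equivalent to $Q$ and realizes the claimed birational automorphism; a natural route is to run the parallel construction with $\Phi_{1,2}$ to obtain a second contraction of $\VPS([f],4)$ onto a smooth quadric $Q'$, and then check that $Q' \cong Q$ and that $\Phi_{1,2} \circ \Phi_{2,1}^{-1}$ coincides with the quadric-through-$D_{2,1}$ map.
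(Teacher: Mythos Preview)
Your outline is essentially correct and reaches the same conclusion as the paper, but the route diverges in the middle. The paper does not invoke a structure theorem to identify $\Phi_{2,1}$ as a blow-up. Instead it builds the embedding
\[
\Xi \colon \VPS([f],4) \longrightarrow G\bigl(2,f^{\bot}_{2,1}\bigr)\times G\bigl(2,f^{\bot}_{1,2}\bigr),\qquad [\Gamma]\mapsto \bigl(I_{\Gamma,(2,1)},\,I_{\Gamma,(1,2)}\bigr),
\]
proves that $\Xi$ is an injective morphism using both $\Phi_{2,1}$ and $\Phi_{1,2}$ simultaneously, and then identifies the image of~$\Xi$ directly with the closure of the graph of the rational map $\rho=\Phi_{1,2}\circ\Phi_{2,1}^{-1}$. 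The bulk of the paper's effort goes into the step you flag as the main obstacle: it shows that $\rho$ is defined precisely by the $5$-dimensional space $\HH^0\!\bigl(\mathcal{I}_{D_{2,1}}(2)\bigr)$ by tracking a pencil of apolar length-$4$ schemes on a general $(2,1)$-curve~$C_g$ and computing that the corresponding curve of $4$-secant lines on the $(1,2)$-side has degree~$2$. This degree computation is what you would still need to supply.

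Your shortcut---smoothness of source and target plus $\pr^1$-fibres over $D_{2,1}$ implies $\Phi_{2,1}$ is the blow-up---does work, but the attribution is off. Ando's theorem concerns extremal contractions and needs $K$-negativity on the fibres, which you have not checked; the cleaner reference is the Nakano--Fujiki criterion (a proper birational morphism of smooth varieties whose exceptional locus is an irreducible divisor mapping to a smooth centre with $\pr^r$-fibres is the blow-up), or Mori's classification of divisorial contractions in dimension~$3$ once you observe that the relative Picard number is one. Either way you should say explicitly why the exceptional locus $\Phi_{2,1}^{-1}(D_{2,1})$ is an irreducible divisor. Also, the normality of $D_{2,1}$ in~$\pr^4$ is argued in the paper via Corollary~\ref{cor:morphism} (a degenerate $D_{2,1}$ would force all rulings of~$Z_{2,1}$ to meet a fixed line), not via Lemma~\ref{lem:partials_22}; your reference there is slightly misplaced.

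What your approach buys is a faster passage to ``$\VPS([f],4)\cong\mathrm{Bl}_{D_{2,1}}Q$''; what the paper's approach buys is that the graph description and the identification of~$\rho$ with the quadric-system map fall out of the same explicit construction, with no appeal to contraction theorems.
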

\begin{proof}
We first show that the following rational map is an injective 
morphism:
\[
  \begin{array}{rccc}
    \Xi \colon & \VPS ([f],4) & \dashrightarrow & G\left(2, 
f^{\bot}_{2,1}\right) 
\times G\left(2, f^{\bot}_{1,2}\right) \\
    & [\Gamma] & \mapsto & \bigl( I_{\Gamma, (2,1)}\tth , \tth I_{\Gamma, 
(1,2)} \bigr)
  \end{array}
\]
From Proposition~\ref{prop:all_apolar} all schemes $[\Gamma] \in \VPS([f],4)$ 
are apolar to~$f$, so $I_{\Gamma} \subseteq f^{\bot}$. By 
Remark~\ref{remark:collinear}, both images of~$\Gamma$ under~$\delta_{2,1}$ and 
$\delta_{1,2}$ lie exactly on one line, so 
\[
  \dim I_{\Gamma, (1,2)} \, = \, \dim I_{\Gamma, (2,1)} \, = \, 2.
\]
Hence $\Xi$ is a morphism. 

We now show the injectivity of~$\Xi$. From Lemma~\ref{lem:map22} and the fact 
that $\Gamma$ is apolar to~$f$ we have that the only points where $\Xi^{-1}$ is 
possibly not defined are the images of schemes~$\Gamma$ that contain a subscheme 
of length~$2$ on a $(1,0)$-line, and a subscheme of length~$2$ on a 
$(0,1)$-line. If these two subschemes of $\Gamma$ do not intersect, then the 
union of the two lines is defined by a $(1,1)$-form that must be apolar to~$f$, 
contradicting Lemma~\ref{lem:orthogonal_biquadratic}. If the two subschemes 
intersect, the scheme~$\Gamma$ is mapped to a line in both~$Z_{2,1}$ 
and~$Z_{1,2}$. In this case $\Gamma$ has a subscheme of length~$3$ contained in 
the union of a $(0,1)$-line and a $(1,0)$-line and a residual point that lies in 
the double curve and thus is mapped to the singular curve in both $Z_{2,1}$ and 
$Z_{1,2}$. 

Assume that $\Gamma$ and $\Gamma'$ are two apolar 
schemes of length~$4$ and that $\Xi(\Gamma) = \Xi(\Gamma')$.  
Then both $\Gamma$ and $\Gamma'$ have a subscheme of length $3$ contained in 
a pair of lines $L \cup L'$ that together form a $(1,1)$-curve, and they 
each have a residual point that is mapped to a singular point in both $Z_{2,1}$ 
and $Z_{1,2}$. Then for each line~$L$ and~$L'$ the subschemes of~$\Gamma$ 
and $\Gamma'$ residual to the line must coincide. But both 
schemes must also contain the point of intersection of~$L$ and~$L'$, so the 
two schemes coincide. Hence $\Xi$ is injective.

Since $\VPS([f],4)$ is smooth, to complete the proof it suffices 
to identify the image of~$\Xi$ and show that it is smooth.

Now, the collection of lines $D_{2,1} \subseteq X_{2,1}$ as defined 
in~\eqref{eq:D} is a smooth rational quartic curve. It is normal, otherwise it 
would span a $\pr^3$ and therefore be contained in a special linear complex, 
i.e.\ all lines in~$Z_{2,1}$ would intersect some fixed line, which is ruled out 
by Corollary~\ref{cor:morphism} above. In the planes spanned by two intersecting 
lines in~$Z_{2,1}$ the pencil of lines through the intersection point is a line 
in~$X_{2,1}$. For each double point on~$Z_{2,1}$ we obtain such a line, so they 
form a surface scroll~$V_{2,1} \subseteq X_{2,1}$. By construction, $D_{2,1}$ is 
contained in this scroll and intersects the general line in the scroll in two 
points. So $V_{2,1}$ is also contained in the secant variety of~$D_{2,1}$, a 
cubic hypersurface~$\SD_{2,1}$. Therefore the scroll has degree at most $6$ and 
is contained in the complete intersection $\SD_{2,1}\cap X_{2,1}$. 

To see that 
$V_{2,1}=\SD_{2,1}\cap X_{2,1}$
we compute its degree. This is computed from the 
bidegree~$(d_1,d_2)$ in the Grassmannian. Notice that $V_{2,1}$ parametrizes the 
lines in~$X_{2,1}$ that pass through a singular point in~$Z_{2,1}$. Moreover, 
lines $\ell$ in~$\pr\bigl( \left( f^{\bot}_{2,1} \right)^{*} 
\bigr)$ that pass through a singular point $Q$ 
in~$Z_{2,1}$ such that $[\ell] \in X_{2,1}$, all lie in the plane spanned by the 
two lines contained in~$Z_{2,1}$ passing through~$Q$. The number~$d_1$ counts 
the number of lines in a general plane that belong to~$V_{2,1}$. A general plane 
$P$ contains three singular points of $Z_{2,1}$. For each of them, there is one 
line contained in both $Z_{2,1}$ and~$P$ passing through it, so $d_1=3$. The 
number of lines through a general point that belong to $V_{2,1}$ is~$d_2$. A 
general point lies in three planes that intersect~$Z_{2,1}$ in a conic section, 
hence also in two lines, so $d_2=3$. We conclude that $V_{2,1}$ has degree~$6$, 
and so $V_{2,1}$ is a complete intersection. 

Consider now a Veronese surface $\mathcal{V} \subseteq G\left(2, 
f^{\bot}_{2,1}\right)$ that contains~$D_{2,1}$. The Cremona transformation 
on~$\pr^5$ defined by the quadrics in the ideal of~$\mathcal{V}$ contracts the 
secant variety of~$\mathcal{V}$ to a Veronese surface~$\mathcal{V}'$, while the 
strict transform of~$\mathcal{V}$ is mapped to the secant variety 
of~$\mathcal{V}'$. The Cremona transformation restricts to a birational map
\[ 
  \gamma_{2,1} \colon \, X_{2,1} \dashrightarrow X' \subseteq \pr^4 
\]
where $X' \subseteq \pr^4$ is a smooth quadric $3$-fold. In fact, the 
restriction is defined by the quadrics in the ideal of~$D_{2,1}$ in~$X_{2,1}$.  
This space of quadrics is $5$-dimensional, and the image is a hyperplane 
section~$X'$ of the Pl\"ucker quadric, defined by the quadratic relation between 
the quadrics in the ideal of~$D_{2,1}$ as a curve in~$\pr^4$. 

Consider the 
closure of the graph ${\mathcal Y}\subseteq X_{2,1}\times X' $ of the rational 
map~$\gamma_{2,1}$. The strict transform of~$D_{2,1}$ in~${\mathcal Y}$ is 
mapped to a scroll~$T'$ in~$X'$, the intersection of the secant variety of the 
Veronese surface~$\mathcal{V}'$ with the quadric threefold~$X'$. The strict 
transform in~${\mathcal Y}$ of~$V_{2,1}$ is mapped to a rational normal quartic 
curve~$C'$.

We now compare the map~$\gamma_{2,1}$ with the natural birational 
map~$\rho$ sending~$[\ell]$ to $\Phi_{1,2} \bigl( \Phi_{2,1}^{-1}([\ell])\bigr)$
\[ 
  \rho \colon \, X_{2,1} \dashrightarrow X_{1,2}.
\]
Since $\Phi_{2,1}$ is bijective outside the preimage of the curve~$D_{2,1}$, the 
rational map~$\rho$ is defined outside $D_{2,1}$. On the other hand, $\rho$ is 
not defined anywhere on~$D_{2,1}$. The Picard group of~$X_{2,1}$ is generated by 
the hyperplane bundle, so the map $\rho$ must be defined by a $5$-dimensional 
space of sections in $\HH^0 \! \left({\mathcal I}_{D_{2,1}}(d)\right)$ for 
some~$d$, where ${\mathcal I}_{D_{2,1}}$ is the sheaf of ideals of~$D_{2,1}$ on 
the quadric $3$-fold $ X_{2,1}$. To find the degree~$d$ we consider a general 
curve~$C$ defined by a section in~$f^{\bot}_{2,1}$. On the surface $Z_{2,1} 
\subseteq \pr^3$ the curve~$C$ is mapped to a plane quartic curve with a linear 
pencil of lines that cut the curve in the image of schemes of length $4$ that 
are apolar to~$f$. 
This pencil forms a line in~$X_{2,1}$ that does not intersect~$D_{2,1}$. 
Now, the image~$\overline C$ of the curve~$C$ on~$Z_{1,2}$ has degree~$5$. 
The pencil of apolar schemes of length~$4$ on~$C$ is mapped to schemes that are 
collinear also in~$Z_{1,2}$, so it defines on~$\overline C$ a pencil of 
$4$-secant lines. Assuming $C$ is smooth, any two of these $4$-secant lines are 
disjoint, otherwise $\overline C$ would have a plane section of length~$7$ 
or~$8$, impossible. Therefore the pencil of $4$-secant lines are the lines of 
one family of lines in a smooth quadric surface. This means that the image of 
this pencil of lines in~$X_{1,2}$ is a conic, and hence the degree~$d$ is~$2$. 
Since \[\dim \HH^0 \! \left({\mathcal I}_{D_{2,1}} (2)\right) = 5\]
we may 
conclude that the map~$\rho$ coincides with~$\gamma_{2,1}$. Clearly 
$\gamma_{1,2}$ is the inverse of~$\gamma_{2,1}$, and the graph~${\mathcal Y}$ 
of~$\rho$ is the blowup of~$X_{2,1}$ along the smooth curve~$D_{2,1}$, so 
${\mathcal Y}$ is smooth. There is a map from the graph of~$\gamma_{1,2}$ 
to~$\VPS([f],4)$ which sends a graph point to the ideal generated by the two 
pencils, one in each Grassmannian. Therefore the graph~${\mathcal Y}$ is 
identified with $\Xi \bigl( \VPS([f],4) \bigr)$. The graph is smooth, so 
$\VPS([f],4)$ and the graph~${\mathcal Y}$ are isomorphic.
\end{proof}

\section{Bihomogeneous forms of bidegree $(3,3)$}
\label{bicubic}

Let $f$ be a bihomogeneous form in~$S$ of bidegree~$(3,3)$. 
The Segre-Veronese embedding~\eqref{eq:svembedding} is in this case $\nu_{3,3} 
\colon \pp \hookrightarrow \pr^{15}$. 
By~\eqref{eq:rankf}, we have $\rank(f) = 6$ and ${\rm dim} \VPS([f],6) = 2$.

We identify~$\pr(S_{3,3})$ with a linear subspace \[\pr^{15} \subseteq 
\pr \bigl( \C[z_0,z_1,z_2,z_3]_3 \bigr) = \pr^{19},\] 
where $z_0=x_0y_0$, 
$z_1=x_0y_1$, 
$z_2 = x_1y_0$, $z_3 = x_1y_1$. So we can see $[f] \in \pr^{15}$ as a 
cubic form  $[F] \in \pr^{19}$ in $4$ variables.

\begin{remark}
Given $f \in S_{3,3}$ we can associate to it two orthogonal ideals: first of all 
we have the orthogonal~$f^{\bot} \subseteq \C[t_0,t_1][u_0,u_1]$ that we 
introduced and used in the previous sections; moreover, once we interpret~$f$ as 
a cubic form~$F$, we have also~$F^{\bot} \subseteq \C[v_0,v_1,v_2,v_3]$, a 
homogeneous ideal in a polynomial ring in $4$ variables, that act on 
$\C[z_0,z_1,z_2,z_3]$ by differentiation, namely $v_i(f) = \partial/\partial 
z_j(f)$ for $f\in \C[z_0,z_1,z_2,z_3]$.
\end{remark}

\begin{lemma}
\label{lem:orthogonal_cubic}
For a general bihomogeneous form $f\in S_{3,3}$, 
the orthogonal~$f^\bot$ is generated 
by~$5$ bihomogeneous forms of bidegree~$(2,2)$ in~$T$, together with 
$f^{\bot}_{3,1},f^{\bot}_{1,3},f^{\bot}_{4,0}$ and $f^{\bot}_{0,4}$.
\end{lemma}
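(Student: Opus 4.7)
The plan is to mirror the strategy of Lemma~\ref{lem:orthogonal_biquadratic}: first pin down the bigraded Hilbert function of $T/f^\bot$ for a general $f\in S_{3,3}$, and then show bidegree by bidegree that the listed generators suffice via syzygy arguments. For general $f$, each apolar map $\phi_{f,(a,b)}$ from~\eqref{apolarmap} has maximal rank, hence $T/f^\bot$ is a bigraded Artin quotient with Hilbert function $H(a,b)=\min\bigl((a+1)(b+1),(4-a)(4-b)\bigr)$ inside the box $[0,3]^2$ and zero outside. The only nonzero pieces of $f^\bot$ in that box then occur in bidegrees $(2,2),(3,1),(1,3),(3,2),(2,3),(3,3)$, of dimensions $5,5,5,10,10,15$ respectively.

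Bidegrees $(a,b)$ with $a\geq 4$ (resp.\ $b\geq 4$) are immediately handled, since every monomial of $T_{a,b}$ factors through $T_{4,0}$ (resp.\ $T_{0,4}$), so $f^\bot_{a,b}=T_{a,b}$ is contained in the ideal generated by $f^\bot_{4,0}$ (resp.\ $f^\bot_{0,4}$). For bidegree $(3,2)$, copying the argument in the proof of Lemma~\ref{lem:orthogonal_biquadratic}, any relation $t_0 h_1 + t_1 h_2 = 0$ with $h_i\in f^\bot_{2,2}$ forces $h_1=t_1\tilde h$ and $h_2=-t_0\tilde h$ for some $\tilde h\in T_{1,2}$; then both partials of $\tilde h(f)\in S_{2,1}$ vanish, so $\tilde h(f)=0$, meaning $\tilde h\in f^\bot_{1,2}=0$. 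Consequently $T_{1,0}\cdot f^\bot_{2,2}$ has dimension $10=\dim f^\bot_{3,2}$, and the bidegree $(2,3)$ case is symmetric.

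For the top bidegree $(3,3)$, using the previous step I would compute $T_{0,1}\cdot f^\bot_{3,2}=T_{1,1}\cdot f^\bot_{2,2}\subseteq f^\bot_{3,3}$ by the same method: a relation $u_0 g_1+u_1 g_2=0$ with $g_i\in f^\bot_{3,2}$ yields $g_1=u_1\tilde g$ and $g_2=-u_0\tilde g$ for some $\tilde g\in T_{3,1}$, and the vanishing of both partials of $\tilde g(f)\in S_{0,2}$ is equivalent to $\tilde g(f)=0$, that is $\tilde g\in f^\bot_{3,1}$; conversely every element of $f^\bot_{3,1}$ produces such a relation. Hence the syzygy space has dimension exactly $5$ and $\dim\bigl(T_{1,1}\cdot f^\bot_{2,2}\bigr)=2\cdot 10-5=15=\dim f^\bot_{3,3}$, completing the generation statement.

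The main obstacle I anticipate is establishing the Hilbert function for a general $f$, in particular the vanishings $f^\bot_{1,2}=f^\bot_{2,1}=0$ that drive the syzygy arguments above. This should follow either from a single explicit test form together with semicontinuity, or from the rank count~\eqref{eq:rankf} combined with the dimension of $\VPS([f],r)$ furnished by Proposition~\ref{prop:dimension}.
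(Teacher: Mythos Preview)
Your proposal is correct and follows the same overall strategy as the paper: compute the bigraded Hilbert function of $T/f^\bot$ by maximal rank of the apolar maps, dispose of bidegrees with $a\geq 4$ or $b\geq 4$ via $T_{4,0}$ and $T_{0,4}$, and then handle $(3,2),(2,3),(3,3)$ by unique factorization/syzygy arguments. The treatment of $(3,2)$ and $(2,3)$ is identical to the paper's. The only genuine difference is the $(3,3)$ step. The paper argues that $T_{2,0}\cdot f^\bot_{1,3}$ already fills $f^\bot_{3,3}$, by showing the multiplication map $f^\bot_{1,3}\otimes T_{2,0}\to f^\bot_{3,3}$ (both spaces $15$-dimensional) is injective: a purported relation $gq=g'q'$ forces, via unique factorization, $g=g_0l',\,g'=g_0l$ with $g_0\in T_{0,3}$, hence $g_0\in f^\bot_{0,3}=0$. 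Your route instead computes $\dim\bigl(T_{0,1}\cdot f^\bot_{3,2}\bigr)=2\cdot 10-5=15$ by identifying the syzygy space with $f^\bot_{3,1}$, and therefore obtains $T_{1,1}\cdot f^\bot_{2,2}=f^\bot_{3,3}$. This is a slightly sharper intermediate statement than the paper's (it shows $f^\bot_{2,2}$ alone already generates bidegree $(3,3)$, with $f^\bot_{3,1}$ and $f^\bot_{1,3}$ needed only as minimal generators in their own bidegrees), and your syzygy count is arguably more transparent than the paper's two-term relation argument. As for the ``main obstacle'' you flag, the paper simply invokes maximal rank for general $f$ to get the needed vanishings $f^\bot_{1,2}=f^\bot_{2,1}=f^\bot_{0,3}=f^\bot_{3,0}=0$; your semicontinuity-plus-test-form approach would work just as well.
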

\begin{proof}
Let us consider the maps $\phi_{f,(a,b)}$
as we did in Section~\ref{biquadratic}. The kernels of these maps 
are the bihomogeneous components of the orthogonal ideal of~$f$. Since $f$ is a
general form, we may assume that the maps~$\phi_{f,(a,b)}$ have maximal rank, 
i.e.\ are either injective or surjective.
Thus we may assume they are injective when 
\[
  (a,b) \in \bigl\{ (0,0), (0,1), (1,0), (1,2), (2,1), (0,3), (3,0) \bigr\}. 
\]
Then, since $\dim T_{2,2} = 9$ and $\dim 
S_{1,1} = 4$, the map $\phi_{f,(2,2)}$ is surjective and $\dim f_{2,2}^\bot 
= 5$. Similarly, we can see that the dimension of $f_{3,1}^\bot$ and 
$f_{1,3}^\bot$ is also~$5$, and that 
\[\dim 
f_{2,3}^\bot  =  \dim f_{3,2}^\bot  =  10 \quad\text{ and } \quad \dim f_{3,3}^\bot = 15.\]
By an analogous procedure to that in the proof of 
Lemma~\ref{lem:orthogonal_biquadratic}, 
it follows that $f_{2,3}^\bot = T_{0,1} \cdot f_{2,2}^\bot$ and 
$f_{3,2}^\bot = T_{1,0} \cdot f_{2,2}^\bot$, and that $f^\bot$ is generated by
$f_{a,b}^\bot$ for $a,b \leq 3$ together with~$T_{4,0} = 
f_{4,0}^\bot$ and~$T_{0,4} = f_{0,4}^\bot$.

We are left to prove that $f_{3,3}^\bot$ is generated by $f^{\bot}_{2,2}, 
f^{\bot}_{3,1}, $ and $f^{\bot}_{1,3}$. If not, then in particular the 
multiplication map $f^{\bot}_{1,3} \otimes T_{2,0}\longrightarrow f_{3,3}^\bot$, 
is not onto. But then there is a relation $gq - g'q' = 0$, where say $g,g'\in 
f^{\bot}_{1,3}$, while $q,q'\in T_{2,0}$. By unique factorization, $q$ and $q'$ 
must have a common factor, so $gl=g'l'$ for some $l,l'\in T_{1,0}$. By 
assumption, $g,g'$ are independent, so $l,l'$ generate $T_{1,0}$ and $g=g_0l'$ 
and $g'=g_0l$. This is possible only if $g_0\in f^{\bot}_{0,3}$, against our 
assumption. 
\end{proof}

Let $f$ be a general bihomogeneous form of bidegree~$(3,3)$ and let $F \in 
\C[z_0,z_1,z_2,z_3]$ be the cubic associated to~$f$. If $F$ is not a cone, the 
orthogonal~$F^{\bot}$ is generated by~$6$ quadrics. By Sylvester's Pentahedral 
Theorem (see~\cite{Sylvester1904b} and for 
example~\cite{Oeding2013}*{Theorem~3.9} and 
\cite{Landsberg2012}*{Example~12.4.2.3}) the powersum variety 
$\VSumP (F, 5) = \VPowerS_{\pr^3} ([F], 5)$ is just a point corresponding to a 
scheme 
$\Gamma_{0} \subseteq \pr^3$ given by a set of~$5$ points. The ideal 
of~$\Gamma_0$ is generated by $5$ quadrics, so a general quadric apolar to~$F$ 
does not intersect~$\Gamma_0$. In fact we may assume that $\nu_{1,1} \left( \pp 
\right)$ in~$\pr^3$ is defined by a general quadric polynomial orthogonal 
to~$F$, and hence 
\begin{equation*}
  \Gamma_0 \cap \nu_{1,1} \left( \pp \right) = \emptyset. 
\end{equation*}
We consider the closure $H_{3t+1}(\Gamma_0)$ in the Hilbert scheme of twisted 
cubic curves, of the set of curves that contain~$\Gamma_0$. A result of 
Kapranov shows that it is a smooth surface.

\begin{proposition}[\cite{Kapranov1993}*{Theorem~4.3.3}]
\label{H3t+1}
$H_{3t+1}(\Gamma_0)$ is isomorphic to a smooth Del Pezzo surface of degree~$5$, 
i.e.\ isomorphic to the blowup of~$\pr^2$ in $4$ points.
\end{proposition}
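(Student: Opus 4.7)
The plan is to exhibit an explicit birational morphism from $H_{3t+1}(\Gamma_0)$ to $\pr^2$ that contracts four disjoint rational curves to four points in general position, thereby identifying $H_{3t+1}(\Gamma_0)$ with the blowup of $\pr^2$ at four general points, i.e.\ the smooth del Pezzo of degree~$5$. A dimension count comes first: the Hilbert scheme of twisted cubics in~$\pr^3$ is irreducible of dimension~$12$ (at a smooth twisted cubic~$C$ we have $N_{C/\pr^3} \cong \mathcal{O}_{\pr^1}(5)^{\oplus 2}$, so $h^1(N_{C/\pr^3})=0$), and passing through a general point imposes two transverse conditions; hence $\dim H_{3t+1}(\Gamma_0) = 12 - 2\cdot 5 = 2$.

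Fix $p_1 \in \Gamma_0$ and consider the map
\[
\phi \colon H_{3t+1}(\Gamma_0) \dashrightarrow \pr\bigl(T_{p_1}\pr^3\bigr) \cong \pr^2
\]
sending $[C]$ to its tangent direction at~$p_1$. I would first argue $\phi$ is in fact a morphism: no apolar $[C]$ has $p_1$ as a singular point, since a node at~$p_1$ in a reducible curve $\ell \cup C'$ with both components through~$p_1$ would force four of the five points of~$\Gamma_0$ to be coplanar. Birationality follows from the classical fact that six general points of~$\pr^3$ determine a unique twisted cubic together with the interpretation of a tangent direction at~$p_1$ as an infinitely near sixth point; the general fiber of~$\phi$ is therefore a single reduced point.

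I would then identify the exceptional locus. For each $j \in \{2,3,4,5\}$, let $\ell_{1j}$ be the line joining $p_1$ and $p_j$, let $\Pi_j$ be the plane spanned by the other three points, and set $q_j := \ell_{1j} \cap \Pi_j$. The conics in~$\Pi_j$ through $q_j$ and the remaining three points form a pencil, and the associated reducible curves $\ell_{1j} \cup C'$ constitute a smooth rational curve $E_j \subseteq H_{3t+1}(\Gamma_0)$ of $(3t+1)$-schemes, each with tangent direction $[\overline{p_1 p_j}]$ at~$p_1$. So $\phi$ contracts $E_2, E_3, E_4, E_5$ to the four distinct points $r_j := [\overline{p_1 p_j}] \in \pr^2$, and these are in general position because three collinear ones would force four of the $p_i$ to be coplanar.

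A birational morphism between smooth projective surfaces factors as a sequence of blowdowns of $(-1)$-curves, so it remains to verify that the $E_j$ are pairwise disjoint, that each is a $(-1)$-curve, and that $\phi$ has no further exceptional curves. This is the main technical obstacle; I would address it by a local computation of $d\phi$ at the reducible curves via the normal bundle sequence, combined with the observation that the Picard rank jump between $H_{3t+1}(\Gamma_0)$ and~$\pr^2$ must be exactly four once the~$E_j$ are accounted for. The conclusion is then that $H_{3t+1}(\Gamma_0)$ is isomorphic to the blowup of~$\pr^2$ at the four points $r_2,r_3,r_4,r_5$ in general position, which is a smooth del Pezzo surface of degree~$5$.
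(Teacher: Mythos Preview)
The paper gives no proof of this proposition; it is stated with a citation to Kapranov and nothing more. So there is no argument in the paper to compare against. Your tangent-direction map is in fact one of Kapranov's own descriptions of this space: he identifies the closure of the locus of rational normal curves through $n$ general points in~$\pr^{n-2}$ with $\overline{M}_{0,n}$, and the projection to the tangent direction at one marked point is precisely his first contraction to~$\pr^{n-3}$. So the approach is the right one in outline.

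The sketch has a genuine gap, however. Your endgame invokes the factorization of a birational morphism between smooth projective surfaces into blowdowns of $(-1)$-curves, but you never establish that $H_{3t+1}(\Gamma_0)$ is smooth. The vanishing $h^1(N_{C/\pr^3})=0$ covers only the locus of smooth twisted cubics; at the reducible members --- and there are more of these than the four pencils~$E_j$ you list: the ten line-plus-conic families and, at their pairwise intersections, fifteen chains of three lines --- smoothness of the Hilbert scheme needs a separate deformation-theoretic check. Without smoothness the blowdown factorization is unavailable, and the Picard-rank remark you offer in its place is circular, since you have no independent computation of $\rho\bigl(H_{3t+1}(\Gamma_0)\bigr)$. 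Kapranov avoids this by exhibiting the isomorphism with $\overline{M}_{0,5}$ in both directions rather than deducing it from structural properties of~$\phi$ alone; if you want a self-contained argument along your lines, you must first prove smoothness at every degenerate curve and then show directly that $\phi$ is an isomorphism off the four~$E_j$.
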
 

\begin{lemma}
\label{lem:smooth_bicubic}
Let $f$ be a general bihomogeneous form of bidegree~$(3,3)$, and let $\Gamma_0$ 
be the unique set of $5$ points in~$\pr^3$ that is apolar to the cubic form~$F$ 
corresponding to $f$. Then for every smooth apolar $[\Gamma] \in 
\VPS([f],6)$, there 
exists a (possibly reducible) twisted cubic curve $C_\Gamma$ passing 
through~$\Gamma_0$ and~$\Gamma$, in particular $[C_\Gamma] \in 
H_{3t+1}(\Gamma_0)$. 
\end{lemma}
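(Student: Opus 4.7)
The plan is to pull back apolarity for $f$ in the Cox-ring sense to classical apolarity for the cubic $F$ in $\pr^3$, and then to locate $C_\Gamma$ via a dimension count inside the degree-$2$ component of $F^{\bot}$.

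First I would factor the $(3,3)$-embedding as $\nu_{3,3} = \nu_3 \circ \nu_{1,1}$, where $\nu_{1,1}\colon \pp \hookrightarrow \pr^3$ realises $\pp$ as the smooth quadric $Q_0 = V(z_0z_3 - z_1z_2)$ and $\nu_3\colon \pr^3 \hookrightarrow \pr^{19}$ is the cubic Veronese, with $\pr(S_{3,3}) \subseteq \pr^{19}$ the linear subspace containing $\nu_3(Q_0)$. By Lemma~\ref{apolarity}, this factorisation makes $\Gamma \subseteq \pp$ apolar to $f$ (in the sense of Definition~\ref{def:apolar}) equivalent to $\Gamma' := \nu_{1,1}(\Gamma) \subseteq \pr^3$ being classically apolar to $F$, i.e.\ $I_{\Gamma',2} \subseteq F^{\bot}_2$. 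The same inclusion holds for $\Gamma_0$ by construction.

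Next I would set up the dimension count. For a general cubic $F$, the apolar map $q \mapsto q(F)$ from $\C[v_0,\ldots,v_3]_2$ to $\C[z_0,\ldots,z_3]_1$ is surjective, so $\dim F^{\bot}_2 = 10-4 = 6$. Since $\Gamma_0$ consists of $5$ points in linearly general position (Sylvester's Pentahedral Theorem), $\dim I_{\Gamma_0,2} = 5$; since $\Gamma'$ consists of $6$ distinct points, $\dim I_{\Gamma',2} \geq 10-6 = 4$. Both subspaces sit inside the $6$-dimensional $F^{\bot}_2$, so
\[
 \dim \bigl( I_{\Gamma_0,2} \cap I_{\Gamma',2} \bigr) \;\geq\; 5 + 4 - 6 \;=\; 3.
\]
I would then pick three linearly independent quadrics in this intersection. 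They vanish on $\Gamma_0 \cup \Gamma'$, a subscheme of length $\geq 11 > 2^3$, so by B\'ezout their common base locus cannot be $0$-dimensional: it must contain a curve $C_\Gamma$ through both $\Gamma_0$ and $\Gamma$.

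Finally I would verify $[C_\Gamma] \in H_{3t+1}(\Gamma_0)$. Since $\Gamma_0$ is not coplanar, $C_\Gamma$ is not contained in a plane, so $\deg C_\Gamma \geq 3$; conversely a non-planar curve $C$ of degree $\geq 4$ has $h^0 \bigl( \mathcal{I}_C(2) \bigr) \leq 2$, contradicting the existence of three independent quadrics through $C_\Gamma$. Hence $\deg C_\Gamma = 3$, and a residuation argument (two of the three quadrics cut out a degree-$4$ complete-intersection curve, from which the third excises a residual $0$-dimensional scheme) forces arithmetic genus $0$, giving Hilbert polynomial $3t+1$. The main obstacle is this last step in the degenerate cases where $\Gamma$ is special and $C_\Gamma$ fails to be an irreducible twisted cubic (e.g.\ a conic plus a chord, a tripod of three concurrent lines, or a chain of lines): one must verify that every such flat degeneration still lies in $H_{3t+1}$ and that $\Gamma_0 \cup \Gamma'$ distributes among the components so as to remain on the curve.
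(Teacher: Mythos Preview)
Your dimension count inside $F^{\bot}_2$ is correct and elegant, and the route is genuinely different from the paper's. The paper never intersects $I_{\Gamma_0,2}$ with $I_{\Gamma',2}$; instead it first produces the twisted cubic through $\Gamma'$ alone via Castelnuovo's lemma (six points in $\pr^3$ with no four coplanar), and then shows $\Gamma_0$ lies on it by reading $[F]$ as a point in the span of a rational normal curve of degree~$9$ and invoking Sylvester's result (Lemma~\ref{rat}) that a general binary nonic has a unique rank-$5$ decomposition. Your approach would bypass Lemma~\ref{rat} entirely, which is attractive.

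However, there is a real gap at the B\'ezout step. Knowing that three quadrics share $11>8$ points tells you only that their common zero locus has a positive-dimensional component; it does \emph{not} tell you that this curve passes through all of $\Gamma_0\cup\Gamma'$. A priori the base locus of your net $W$ could be a curve $C$ of degree $\le 2$ together with isolated points absorbing the excess. Concretely: if two general members of $W$ meet in a degree-$4$ curve $D=C\cup D''$ and the third member contains only $C$, then at most $2\deg D''=2(4-\deg C)$ of the eleven points can lie off $C$. For $\deg C=1$ you would need $\ge 5$ of the eleven on a line, which the position constraints on $\Gamma_0$ and $\Gamma'$ rule out easily; but for $\deg C=2$ you need $\ge 7$ of the eleven on a planar conic, and excluding this (three of $\Gamma_0$ and four of $\Gamma'$ on a common conic) is exactly the kind of degenerate-position argument you have deferred. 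Only after you know $\Gamma_0\subseteq C_\Gamma$ does your non-planarity argument and the bound $h^0(\mathcal I_C(2))\le 2$ for non-degenerate degree-$\ge 4$ curves kick in. So the obstacle is not where you placed it (flat limits of twisted cubics in $H_{3t+1}$) but one step earlier: you must first establish $\deg C_\Gamma=3$ before you can discuss which degeneration of a twisted cubic it is.
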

\begin{proof}
Consider $6$ general points on $\nu_{1,1} \left( \pp \right) \subseteq \pr^3$. 
They are the intersection of~$\nu_{1,1} \left( \pp \right)$ with a twisted 
cubic curve. This is a particular case of a classical result
often called Castelnuovo's lemma: through $n+3$ points in~$\pr^n$, no~$n$ of 
which lie in a~$\pr^{n-2}$, there is a unique reduced and connected curve of 
degree~$n$ and arithmetic genus~$0$.

Therefore, if $[\Gamma] \in \VPS([f],6)$ is an apolar scheme constituted of~$6$ 
general points, then $\Gamma \subseteq C_\Gamma \subseteq \pr^3$, where 
$C_\Gamma$ is a twisted cubic. One can also show that $\Gamma_0 \subseteq 
C_{\Gamma}$. In fact, by the apolarity lemma, it follows that $\I_\Gamma 
\subseteq F^\bot$, and since $\I_{C_\Gamma} \subseteq \I_\Gamma$ we get 
$\I_{C_\Gamma} \subseteq F^\bot$.  Under the $3$-uple Veronese embedding 
$C_{\Gamma}$ becomes a rational curve of degree~$9$, and since $C_{\Gamma}$ is 
apolar to~$F$, the point~$[F]$ lies in the span of this degree~$9$ curve. 
Therefore $F$ can be interpreted as a general binary form of degree~$9$, and by 
Lemma \ref{rat} such a binary form has rank~$5$, so $[F]$ lies on the span of 
$5$ points belonging to the degree~$9$ curve. On the other hand, the only scheme 
of $5$ points apolar to~$F$ is $\Gamma_0$, therefore those $5$ points are 
nothing but the image of~$\Gamma_0$ under the $3$-uple Veronese embedding, which 
implies that $C_{\Gamma}$ passes through~$\Gamma_0$.

We consider now the other kinds of smooth apolar schemes 
in~$\VPS([f],6)$. If no plane 
passes through $4$ of the points of~$\Gamma$, then we are in the general 
situation and the previous argument shows that we have a unique (smooth) twisted 
cubic through~$\Gamma$ and~$\Gamma_0$. Suppose that exactly $4$ points 
of~$\Gamma$ lie on a plane, and no three of them are on a line. Then there is a 
pencil of conics passing through those planar points, and a line~$\ell$ through 
the remaining two points; thus there exists a unique conic~$C$ in this pencil 
meeting~$\ell$. We prove that $\Gamma_0$ is contained in $C \cup \ell$, which 
hence is an element in $H_{3t+1}(\Gamma_0) $ (corresponding in the Del Pezzo 
surface to a point lying on one of the~$10$ lines of the surface). Under the 
$3$-uple Veronese embedding, the line $\ell$ is mapped to a twisted cubic~$D_1$, 
and the conic~$C$ is mapped to a rational sextic~$D_2$. By construction, the 
point~$[F]$ lies on the span of~$D_1 \cup D_2$. We denote by~$Q$ the point of 
intersection between $D_1$ and $D_2$. We use the same technique as in the proof 
of Lemma~\ref{lem:2pencils} to construct a scheme of length~$5$ apolar to~$F$. 
Let $E_1 = \spn \, D_1$ and $E_2 = \spn \, D_2$, then $E_1 \cong \pr^3$ and $E_2 
\cong \pr^6$. After projection from the point $[F]$ into $\pr^8$ the two linear 
spaces $E_1$ and $E_2$ will intersect in a line, so there is a unique plane~$P$ 
containing the line~$\overline{[F] Q}$ and intersecting~$E_1$ in a 
line~$\ell_{E_1}$ and $E_2$ in a line~$\ell_{E_2}$. The variety of $3$-secant 
planes to~$D_2$ is a quartic hypersurface in~$E_2$, and a general line 
meeting~$D_2$ intersects it in a unique further point. In particular 
$\ell_{E_2}$ intersects~$D_2$ in~$Q$ and the variety of $3$-secant planes in a 
further point~$T$. Therefore we may assume that there are three points~$p_1, 
p_2$ and $p_3$ in~$D_2$ whose span contains~$T$. Consider now the 
line~$\overline{[F] T}$: since it is contained in~$P$, it meets~$\ell_{E_1}$ in 
one point~$R$. A general point in~$E_1$ lies in a unique secant to~$D_1$, so we 
obtain two points $p_4, p_5$ in~$D_2$ whose span contains~$R$. In this way $[F] 
\in \spn \bigl( \{p_1, \dotsc, p_5\} \bigr)$. As above $\{p_1, \dotsc, p_5\} = 
\Gamma_0$ under the $3$-uple Veronese embedding, so the lemma follows. 

Eventually, we rule out all the cases that are left. Suppose that 
$3$ of the $6$ points of~$\Gamma$ are collinear on a line~$\ell$; then those 
$3$ collinear points may be replaced in~$\Gamma$ by a scheme of 
length~$2$, so that $f$ is apolar to a scheme of length~$5$ on~$\pp$. If $5$ 
of the~$6$ points lie in a plane then, the five coplanar points 
may be replaced in~$\Gamma$ by a scheme of length~$4$, so that $f$ is apolar to 
a scheme of length~$5$ on~$\pp$. In both cases this is against the generality 
assumption of~$f$.
\end{proof}

We now reformulate and prove Theorem~\ref{thm:main}\ref{B}.

\begin{theorem}
\label{bicubictheorem}
For a general bihomogeneous form~$f$ of bidegree~$(3,3)$ the 
variety~$\VPS([f],6)$ is isomorphic to a smooth Del Pezzo surface of degree~$5$.
\end{theorem}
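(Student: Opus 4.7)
The plan is to show that intersecting twisted cubics through $\Gamma_0$ with the Segre quadric gives the desired isomorphism. Concretely, I would first construct the candidate morphism
\[
\Phi \colon H_{3t+1}(\Gamma_0) \longrightarrow \Hilb_6(\pp), \qquad [C] \longmapsto \bigl[ C \cap \nu_{1,1}(\pp) \bigr].
\]
Since $\Gamma_0 \cap \nu_{1,1}(\pp) = \emptyset$, no curve $C$ in $H_{3t+1}(\Gamma_0)$ can be contained in the quadric, so the intersection is zero-dimensional and of length~$6$ by B\'ezout. Being a scheme-theoretic intersection with a fixed surface, $\Phi$ is a morphism by the universal property of the Hilbert scheme.

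The second step is to prove that $\Phi$ lands in $\VPS([f],6)$. The inclusion $\Gamma_0 \subseteq C$ forces $I_C \subseteq I_{\Gamma_0} \subseteq F^{\bot}$, so the $10$-dimensional space $(I_C)_3$ lies in $F^{\bot}_{3}$. The restriction map
\[
\C[z_0,z_1,z_2,z_3]_3 \twoheadrightarrow T_{3,3}
\]
has kernel $(z_0z_3-z_1z_2)\cdot\C[z_0,z_1,z_2,z_3]_1$, and its restriction to $(I_C)_3$ is injective because no nonzero multiple of the quadric can vanish on an irreducible curve spanning $\pr^3$. The resulting $10$-dimensional subspace of $T_{3,3}$ vanishes on $\Gamma := C \cap \nu_{1,1}(\pp)$ and lies in $f^{\bot}_{3,3}$ by compatibility of the two apolarity pairings. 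For general $C$, the scheme $\Gamma$ imposes independent conditions on $T_{3,3}$, so $\dim I_{\Gamma,(3,3)} = 10$ and this subspace exhausts $I_{\Gamma,(3,3)}$; by Lemma~\ref{apolarity}, $[\Gamma] \in \VPS([f],6)$, and hence $\Phi \bigl( H_{3t+1}(\Gamma_0) \bigr) \subseteq \VPS([f],6)$ by closure.

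For bijectivity, surjectivity onto the smooth locus of $\VPS([f],6)$ is exactly the content of Lemma~\ref{lem:smooth_bicubic}: every smooth apolar $[\Gamma]$ arises as $[\Gamma] = \Phi([C_{\Gamma}])$ for the twisted cubic $C_{\Gamma}$ through $\Gamma_0 \cup \Gamma$. Injectivity on the dense open of reduced, general-position $\Gamma$'s follows from Castelnuovo's lemma, since the eleven points of $\Gamma_0 \cup \Gamma$ lie on a unique twisted cubic. Thus $\Phi$ is bijective and birational on a dense open set.

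The main obstacle is to upgrade $\Phi$ to an honest isomorphism of varieties. The cleanest route is to prove smoothness of $\VPS([f],6)$, along the lines of Corollary~\ref{22smooth}: realize $\VPS([f],6)$ as the zero-locus of a regular section of a rank-$10$ vector bundle $E_{\mathcal U}$ over the open stratum $\mathcal{U} \subseteq \Hilb_6(\pp)$ on which $I_\Gamma$ has codimension~$10$ in bidegree~$(3,3)$, using $\phi_{f,(3,3)}$, and then verifying $\VPS([f],6) \subseteq \mathcal{U}$ by ruling out planar, curvilinear, and collinear degenerations with the help of Lemma~\ref{lem:orthogonal_cubic}. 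Once the smoothness of $\VPS([f],6)$ is in hand, the bijective birational morphism $\Phi$ between two smooth irreducible projective surfaces is automatically an isomorphism by Zariski's Main Theorem, and the target is the degree-$5$ Del Pezzo surface by Proposition~\ref{H3t+1}, completing the proof.
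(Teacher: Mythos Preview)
Your approach is essentially the same as the paper's: construct the morphism $\Phi$ from $H_{3t+1}(\Gamma_0)$ to $\VPS([f],6)$ by intersecting with the Segre quadric, establish bijectivity via Lemma~\ref{lem:smooth_bicubic} and Castelnuovo's lemma, prove smoothness of $\VPS([f],6)$ by a vector-bundle section argument, and conclude by Zariski's Main Theorem. Two points deserve tightening, though.

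First, your justification that the intersection $C \cap \nu_{1,1}(\pp)$ has length~$6$ only rules out $C$ itself lying in the quadric, not an individual \emph{component} of a degenerate $C$. The paper handles this by noting that every component of a curve in $H_{3t+1}(\Gamma_0)$ must carry at least one point of~$\Gamma_0$ (five general points cannot all lie on a conic or a line), hence meets the quadric properly.

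Second, and more substantively, your sketch of the smoothness step proposes to verify $\VPS([f],6) \subseteq \mathcal{U}$ by ``ruling out planar, curvilinear, and collinear degenerations'' directly. The paper takes a cleaner route: it first uses the already-established surjectivity of~$\Phi$ to show that \emph{every} $[\Gamma] \in \VPS([f],6)$ arises as $C \cap \nu_{1,1}(\pp)$ for some $C \in H_{3t+1}(\Gamma_0)$, hence is apolar (this is the content of the paper's Lemma~\ref{all6apolar}, which you do not isolate). Then the containment $\VPS([f],6) \subseteq \mathcal{U}$ reduces to checking that any such intersection is curvilinear and spans a~$\pr^5$ on the $3$-uple embedding of~$C$ --- a statement about curves rather than about arbitrary length-$6$ schemes. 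Your proposed direct analysis would have to confront all limit schemes in $\VPS([f],6)$ without this reduction, which is feasible but messier; leveraging the surjectivity of~$\Phi$ first is the key simplification.
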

\begin{proof}
Let $\Gamma_0$ be the set of $5$ points apolar to the cubic form $F$ associated 
to~$f$ as in Lemma~\ref{lem:smooth_bicubic}.
Let $H_{3t+1}(\Gamma_0)$ be the Hilbert scheme of twisted cubic curves 
through~$\Gamma_0$. 

If $[C]\in H_{3t+1}(\Gamma_0)$, then $C$ is a cubic curve through~$\Gamma_0$, 
that is apolar to~$F$. Moreover $\nu_{1,1} \left( \pp \right) \cap C$ is a 
scheme of length~$6$. In fact every component of $C$ contains some subset 
of~$\Gamma_0$ and therefore intersects~$\nu_{1,1} \left( \pp \right)$ properly.
Thus, we get a morphism
\[
  \psi \colon  H_{3t+1}(\Gamma_0) \longrightarrow \VPS([f],6)
\]
This morphism is injective, because otherwise there would be two cubic 
curves~$C$ and~$C'$ that pass through~$\Gamma_0$ and have a common intersection 
with $\nu_{1,1}\left( \pp \right)$. Since $\Gamma_0$ has no common point with 
$\nu_{1,1} \left( \pp \right)$ this is impossible by Castelnuovo's lemma. To 
show that the morphism~$\psi$ is surjective, we first note that both 
$H_{3t+1}(\Gamma_0)$ and the variety $\VPS([f],6)$ are surfaces, so it suffices 
to show that $\psi$ is onto the set of smooth schemes in~$\VPS([f],6)$. This is 
precisely the content of Lemma~\ref{lem:smooth_bicubic}.

It remains to show that the bijective morphism $\psi$ is an isomorphism. 

\begin{lemma}
\label{all6apolar} 
If $f$ is a general $(3,3)$-form and $[\Gamma]\in \VPS([f],6)$ 
then $\Gamma$ is apolar to~$f$.
\end{lemma}
\begin{proof}
The ideal of each curve~$C$ in~$H_{3t+1}(\Gamma_0)$ is contained in the ideal 
of~$\Gamma_0$ and is therefore apolar to the cubic form~$F$ associated to~$f$. 
The scheme of intersection $\nu_{1,1} \left( \pp \right) \cap C$ is therefore 
apolar to~$f$. This intersection has length~$6$ and belongs to the closure of 
the smooth apolar schemes in~$\VPS([f],6)$. Since $\psi$ is a surjective 
morphism, the lemma follows. 
\end{proof}

To show that $\psi$ is an isomorphism, we show that $\VPS([f],6)$ is smooth. 
First of all, the Hilbert scheme $\Hilb_6 \left( \pp \right)$ itself is smooth 
(see~\cite{Fogarty1968}). Consider next, in~$\Hilb_6 \left( \pp \right)$, the 
open subset~${\mathcal U}$ of schemes~$[\Gamma]$ that lie on a unique curve in 
the Hilbert scheme of twisted cubic curves in~$\pr^3$, and whose ideal on~$\pp$ 
has $\dim I_{\Gamma,(3,3)}=10$, or equivalently, such that the span 
of~$\nu_{3,3}(\Gamma)$ is a~$\pr^5$. Over ${\mathcal U}$ we consider the rank 
$10$ vector bundle~$E_{\mathcal U}$ whose fiber over a scheme~$[\Gamma]$ is the 
dual of the space $I_{\Gamma,(3,3)} \subseteq T_{3,3}$ of $(3,3)$-forms in the 
ideal of~$\Gamma$. The linear form $\phi_{f,(3,3)} \colon T_{3,3}\longrightarrow 
\C$ defines a section on~$E_{\mathcal U}$.  If $\VPS([f],6) \subseteq {\mathcal 
U}$, then $\VPS([f],6)$ is the $0$-locus of this section, by 
Lemma~\ref{all6apolar}.

\begin{lemma}
If $f\in S_{3,3}$ is general, then $\VPS([f],6)\subseteq {\mathcal U}$.
\end{lemma} 
\begin{proof}
It suffices to prove that for any $[\Gamma]\in \VPS([f],6)$ the span 
of~$\nu_{3,3}(\Gamma)$ is a~$\pr^5$.  For a general~$f$, consider the $5$ 
points~$\Gamma_0$ apolar to the cubic form~$F$ on~$\pr^3$ associated to $f$.   
We may assume that every line through a pair of points of~$\Gamma_0$ 
intersects~$\nu_{1,1} \left( \pp \right)$ transversally. Therefore the 
intersection of any cubic curve in $H_{3t+1}(\Gamma_0)$ with $\nu_{1,1} \left( 
\pp \right)$ is curvilinear. If the cubic curve has a component of degree~$d$, 
then the intersection with $\nu_{1,1} \left( \pp \right)$ has degree~$2d$. On 
the $3$-uple embedding of this curve, any such curvilinear scheme spans 
a~$\pr^5$.
\end{proof}
 
Taking all $\phi_{f,(3,3)}$ for $f\in S_{3,3}$ gives a linear space of 
sections of~$E_{\mathcal U}$ without basepoints on~${\mathcal U}$, 
so for a general~$f$ the $0$-locus $\VPS([f],6)$ is smooth.
Since $\psi$ is a bijective map between smooth surfaces, it is an isomorphism.
\end{proof}

\section{Cubic forms on a cubic surface scroll}
\label{cubicone}

Let $\Sigma$ be a cubic scroll in~$\pr^4$. 
The Picard group $\Pic(\Sigma)$ is free of rank~$2$ generated by the class of 
curves $E$ and $F$, where $E^2=-1,\, E \cdot F=1$ and $F^2=0$, see for instance 
\cite{Beauville}*{Proposition IV.1} or \cite{Friedman}*{Chap.~5, Lemma~2.}. 
The linear system $|E+F|$ defines a morphism $\pi \colon \Sigma \longrightarrow 
\pr^2$, which is the blowup of a point $p_E\in \pr^2$ with exceptional divisor 
$\pi^{-1}(p_E)=E$. The Cox ring of $\Sigma$ is isomorphic to a 
bihomogeneous polynomial ring $T = \C[t_0,t_1,u_0,u_1]$ such that 
\begin{align*}
  T_E & = \HH^0\bigl(\Sigma, {\mathcal O}_\Sigma(E)\bigr) = \langle 
t_0\rangle, 
\\
  T_F & = \HH^0\bigl(\Sigma, {\mathcal O}_\Sigma(F)\bigr) = \langle u_0, 
u_1\rangle, \\
  T_{E+F} & = \HH^0\bigl(\Sigma, {\mathcal O}_\Sigma(E+F)\bigr) = \langle 
t_0 u_0, t_0 u_1, t_1\rangle.
\end{align*}
 Let $S = \C[x_0,x_1,y_0,y_1]$ with $t_0,t_1$ dual to $x_0,x_1$ and $u_0,u_1$ 
dual to $y_0,y_1$, generating an action of~$T$ on~$S$ by differentiation, that 
defines the apolarity of the introduction in coordinates.
In fact, we may then interpret $\Sigma \subseteq \pr(S_{E+2F})$ as a set of 
forms:
\begin{multline*}
  \Sigma = \Bigl\{ \bigl[a_0 \tth x_0 \tth l(y_0,y_1) + a_1 \tth x_1 \tth 
l(y_0,y_1)^2\bigr] \in \,\pr(S_{E+2F}) \, \colon \\ a_0, a_1\in \C, \, 
l(y_0,y_1) 
\in \left\langle y_0, y_1 \right\rangle 
\Bigr\}.
\end{multline*}
Let $f\in S_{3E+6F} \subseteq \mathrm{Sym}^3 S_{E+2F}$. Thus $f$ may be 
interpreted as a cubic form~$G$ on~$\pr^4$ restricted to~$\Sigma$. 
According to the definition of variety of apolar schemes of 
Section~\ref{intro}, we have
\[
 \VPSS([f],8) = \overline{ \Bigl\{[\Gamma]\in \Hilb_8 (\Sigma) \, \colon \, 
[f]\in \spn \, \bigl(\nu_{3E+6F}(\Gamma)\bigr), \; \Gamma \ \text{smooth} 
\Bigr\}},
\]
where $\nu_{3E+6F}$ is the morphism associated to the divisor $3E + 6F$. 

The following theorem is equivalent to Theorem~\ref{thm:main}\ref{C}. 

\begin{theorem} 
For a general $f\in S_{3E+6F}$, the variety $\VPSS([f],8)$ is isomorphic 
to~$\pr^2$ blown up in $8$ points. 
\end{theorem}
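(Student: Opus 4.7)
The plan is to adapt the strategy of Section~\ref{bicubic} to the present setting, with anticanonical (hence elliptic) curves on~$\Sigma$ replacing the twisted cubics of the bicubic case, and with the elliptic normal curve result Lemma~\ref{elliptic} playing the role of Sylvester's Lemma~\ref{rat}. First I would analyze the orthogonal ideal $f^{\bot}\subseteq T$ by computing $\dim f^{\bot}_B$ for classes $B\in\Pic(\Sigma)$ with $0 < B < 3E+6F$ via the maps $\phi_{f,B}$ of~\eqref{apolarmap}, extracting a minimal generating set in the style of Lemmas~\ref{lem:orthogonal_biquadratic} and~\ref{lem:orthogonal_cubic}. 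The key numerics are that $-K_\Sigma = 2E+3F$ satisfies $(2E+3F)^2=8$ and $(2E+3F)(3E+6F)=15$, so that a smooth anticanonical curve $C\subseteq\Sigma$ embeds under $\nu_{3E+6F}$ as an elliptic normal curve of degree~$15$ spanning a $\pr^{14}\subseteq\pr^{21}$. Lemma~\ref{elliptic} with $d=8$ then says that length-$8$ subschemes of~$C$ whose span contains a general point of that $\pr^{14}$ are parametrized by~$C$ itself.

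Second, I would look for a distinguished subscheme $\Gamma_0\subseteq\Sigma$ playing the role of the five apolar points in Lemma~\ref{lem:smooth_bicubic}: a scheme of small length, apolar to an auxiliary form derived from~$f$ (for instance, a form of smaller rank on~$\pr^4$ capturing the image of $\mathrm{Sym}^3 S_{E+2F}\to S_{3E+6F}$), through which every smooth apolar length-$8$ scheme for~$f$ passes along a curve of class $2E+3F$. Via the apolarity lemma and Lemma~\ref{elliptic}, I would verify that every smooth $[\Gamma]\in\VPSS([f],8)$ is contained in the intersection of an anticanonical (possibly reducible) curve $C_\Gamma$ with $\Sigma$, where $C_\Gamma$ itself contains~$\Gamma_0$, and carry out the analogue of the case analysis in Lemma~\ref{lem:smooth_bicubic} to cover the degenerations when the~$8$ points are in special position.

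Third, once such $\Gamma_0$ is identified, I would study the Hilbert scheme $H_{2E+3F}(\Gamma_0)$ of anticanonical curves on~$\Sigma$ through~$\Gamma_0$. Since $h^0(\Sigma,2E+3F)=9$, imposing passage through a suitable length-$7$ configuration~$\Gamma_0$ gives a two-dimensional family; a Kapranov-style argument (compare Proposition~\ref{H3t+1}), combined with the fact that $\mathrm{Bl}_7 F_1\cong\mathrm{Bl}_8\pr^2$, should identify it with a smooth Del Pezzo surface of degree~$1$. Sending $[C]\mapsto[C\cap\Gamma]$ then produces a bijective morphism $H_{2E+3F}(\Gamma_0)\to\VPSS([f],8)$, analogous to the morphism~$\psi$ of Theorem~\ref{bicubictheorem}.

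Finally, to upgrade this bijective morphism to an isomorphism I would prove smoothness of~$\VPSS([f],8)$ by the same technique used in Corollary~\ref{22smooth} and at the end of Theorem~\ref{bicubictheorem}: realize $\VPSS([f],8)$ as the zero locus of the section $\phi_{f,(3E+6F)}$ of a vector bundle on a suitable open subset $\mathcal{U}\subseteq\Hilb_8(\Sigma)$ whose fibre over $[\Gamma]$ is dual to $I_{\Gamma,(3E+6F)}$, and invoke Bertini to conclude smoothness for general~$f$. The main obstacle is the second step: pinpointing the auxiliary~$\Gamma_0$ and showing that every apolar~$C_\Gamma$ passes through it. This requires both a Sylvester-type uniqueness statement for a cubic form attached to~$f$ and a Castelnuovo-type result controlling anticanonical curves of~$\Sigma$ through the resulting configuration; the asymmetry of~$\Sigma$ (the $(-1)$-curve~$E$) is likely to make the case-by-case degeneration analysis considerably more delicate than in the $\pr^1\times\pr^1$ case.
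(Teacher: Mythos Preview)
Your plan correctly anticipates two ingredients of the paper's argument: the role of anticanonical curves $|2E+3F|$ as elliptic curves of degree~$15$ under~$\nu_{3E+6F}$, and the smoothness proof via the section~$\phi_{f,3E+6F}$ of a tautological bundle on~$\Hilb_8(\Sigma)$. But your second and third steps misidentify the organizing structure, and the approach as written would not close.

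The gap is this. You look for a short auxiliary scheme~$\Gamma_0$ (length~$7$ or less), analogous to the pentahedral five points, and propose to parametrize apolar schemes by the two-dimensional Hilbert scheme $H_{2E+3F}(\Gamma_0)$ of anticanonical curves through it. But there is no Sylvester-type uniqueness theorem for a general cubic in~$\pr^4$ producing such a~$\Gamma_0$, and your dimension count is off by one: $h^0(\Sigma,2E+3F)=9$ means $|2E+3F|\cong\pr^8$, so seven general base conditions give a pencil, not a surface. More fundamentally, in the bicubic case the twisted cubics live in the ambient~$\pr^3$ and one \emph{intersects} them with~$\nu_{1,1}(\pp)$ to get length-$6$ schemes; here anticanonical curves already lie on~$\Sigma$, so there is no analogous intersection producing a length-$8$ scheme from a curve, and your map ``$[C]\mapsto[C\cap\Gamma]$'' has no clear meaning.

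What the paper actually does is nearly orthogonal to the bicubic template. The distinguished scheme~$\Gamma_0$ has length~$8$, not~$7$: it is the base locus of the \emph{apolar pencil} $K:=\pr(I_{f,2E+3F})\subseteq|2E+3F|$, which is itself the only input needed. Every apolar~$\Gamma$ other than~$\Gamma_0$ lies on a unique~$C\in K$, and Lemma~\ref{elliptic} (in the sharpened form of Lemma~\ref{ell}) identifies $\VPSC([f],8)$ with~$C$ via a \emph{residual point} map: $\Gamma\subseteq C$ is sent to the point~$p_\Gamma$ left over in $C\cap C'$ for $C'\in|3E+3F|$ through~$\Gamma$. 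Composing with the blowup~$\pi\colon\Sigma\to\pr^2$ glues these curve-by-curve isomorphisms into a morphism $\psi_f\colon\VPSS([f],8)\to\pr^2$, whose inverse is defined away from~$\pi(\Gamma_0)$ and lifts to the blowup $\mathrm{Bl}_{\pi(\Gamma_0)}\pr^2$. In other words, $\VPSS([f],8)$ is exhibited directly as a rational elliptic surface fibered over~$K\cong\pr^1$, not as a Hilbert scheme of curves through a fixed configuration. No Castelnuovo-type or Kapranov-type statement is needed; the apolar pencil~$K$ does all the work.
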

\begin{proof}
Recall that we may interpret~$[f]$ as a point~$[G]$ in the linear span inside 
$\pr^{34}= \pr(\C[z_0,z_1,z_2,z_3,z_4]_3)$ of the $3$-uple embedding 
of~$\Sigma$. We may clearly interpret $G$ as a general cubic form in~$\pr^4$. 
Therefore $G$, and hence $f$, is not apolar to any rational quartic curve. In 
particular, we may assume that $I_{f,2E+2F}=I_{f,E+3F}=0$. Furthermore, we may 
assume that
\[
  \phi_{f,2E+3F} \colon T_{2E+3F} \longrightarrow S_{E+3F}
\]
has maximal rank, so $I_{f,2E+3F}= \ker \phi_{f,2E+3F}$ is $2$-dimensional, 
i.e.\ defines a pencil of curves $K \subseteq |2E+3F|$. Notice that, by the 
apolarity lemma~\ref{apolarity}, every curve in~$K$ is apolar to~$f$.

\begin{lemma} 
 For a general $f\in S_{3E+6F}$, the singular curves in~$K$ are 
irreducible nodal curves and the basepoints $\Gamma_0$ of~$K$ are $8$ general 
points in~$\Sigma$.
\end{lemma}
\begin{proof}
Let $\Gamma_0 \subseteq \Sigma$ be $8$ general points. In degree $2E+3F$, the 
ideal of~$\Gamma_0$ is $2$-dimensional.
Furthermore, the set of forms $f'\in S_{3E+6F}$ for which $\Gamma_0$ is apolar 
is the $7$-dimensional subspace in $S_{3E+6F}$ orthogonal to 
$I_{\Gamma_0,{3E+6F}} \subseteq T_{3E+6F}$. These $f'$ are precisely the forms 
that are apolar to every curve in~$K$.
Now, $\pr(S_{3E+6F})$ has dimension~$21$, while the set of pencils 
in~$\pr(T_{2E+3F})$ has dimension~$14$, so the general pencil is apolar to some 
form~$f$ and the lemma follows.
\end{proof}

Any scheme~$[\Gamma]$ in~$\VPSS ([f],8)$ has length~$8$, so it lies on a curve 
in $|2E+3F|$. Therefore, if $\Gamma$ is apolar, it lies on a curve in~$K$. Now 
the base scheme~$\Gamma_0$ of~$K$ has length~$8$, so this scheme is the only one 
of length~$8$ that lies on all curves in~$K$. The other schemes $\Gamma 
\subseteq \Sigma$ of length~$8$ that are apolar to~$f$ lie each on a unique 
curve $C \in K$.

Let $C \in K$. Then $C$ is apolar to~$f$, so we may consider the variety
\[
  \VPSC([f],8) \subseteq \VPSS([f],8).
\]
Let $[\Gamma]\in \VPSC([f],8)$. Then $\Gamma \subseteq C$ is a subset of the 
intersection of $C$ with a curve~$C'$ in $|3E+3F|$. The residual 
part of the intersection $C \cap C'$ is a 
unique point on~$C$ that we denote by~$p_\Gamma$. We thus get a map for 
every $C \in K$:
\[
  \psi_{C} \colon \VPSC([f],8) \longrightarrow C; \qquad [\Gamma] \mapsto 
p_\Gamma.
\]
The map $\psi_{C}$ is defined also on $\Gamma_0$ since any curve $C'+E$, with 
$C'\in K$, lies in $|3E+3F|$ and intersect~$C$ in~$\Gamma_0$ and in the 
residual point $E\cap C$.

Composing $\psi_C$ with the blowup map~$\pi$, we get a morphism
\[
\pi\circ \psi_C \colon \VPSC([f],8) \longrightarrow \pr^2
\]
that we want to extend to all of $\VPSS([f],8)$.
For this, consider, in the Hilbert scheme of length $8$ subschemes of $\Sigma$, 
the open set ${\mathcal U}$ of schemes $\Gamma$ that are contained in a unique 
pencil of curves~$N_\Gamma$ in $|3E+3F|$. Let $\overline\Gamma \subseteq 
\Sigma$ be the baselocus of~$N_\Gamma$. If $\overline\Gamma$ is finite, then it 
has length~$9$ and there is a unique point $p_\Gamma\in \Sigma$ residual 
to~$\Gamma$ in~$\overline\Gamma$. Composing with $\pi$ we get a rational map 
$\psi \colon {\mathcal U} \dasharrow \pr^2$.
Clearly the restriction of~$\psi$ to $\VPSC([f],8)$ extends to the 
morphism~$\psi_{C}$ for every curve $C\in K$. Since $\psi_{C}(\Gamma_0)=\pi(E)$ 
for each~$C$, and every other~$\Gamma$ in~$\VPSS([f],8)$ lies in a unique~$C$, 
we see that the restriction of~$\psi$ to~$\VPSS([f],8)$ extends to a morphism 
\[
  \psi_f \colon \VPSS([f],8)\longrightarrow \pr^2
\]
such that the restriction of~$\psi_f$ to $\VPSC([f],8)$ coincides with~$\psi_C$ 
for each $C\in K$.

We proceed to show that $\psi_C$ is an isomorphism for every curve $C \in K$.
For this we first give a more general fact for elliptic curves, equivalent to 
Lemma~\ref{elliptic}.
\begin{lemma} 
\label{ell} 
Let $C \subseteq \pr^{2d-2}$ be an elliptic normal curve of degree~$2d-1$, then 
the $d$-secants $\pr^{d-1}$'s to~$C$ that pass through a 
general point in~$\pr^{2d-2}$ correspond one to one to points on~$C$.
\end{lemma}
\begin{proof}
Let $C'\subseteq \pr^{2d-1}$ be an elliptic normal curve of degree~$2d$ 
embedded by 
a line bundle~$\mathcal{L}$, then the $(d-1)$-secant variety of~$C'$ is a 
complete intersection of a pencil of determinantal hypersurfaces of degree~$d$: 
each hypersurface is defined by the minors of a matrix of linear forms
(see~\cite{Fisher}*{Theorem~1.3, Lemma~2.9}, \cite{Room}). 
Furthermore, for a general line that intersects~$C'$ in a point~$q$, every 
point 
outside~$C'$ lies on a unique hypersurface in the pencil, so after 
projecting~$C'$ from~$q$ we get a curve $C$ of degree~$2d-1$. Moreover, the 
$d$-secants of $C$ through a general point in $\pr^{2d-2}$ correspond one to 
one to line bundles of degree~$d$ on~$C$, i.e.\ to points on~$C$.
\end{proof}
\begin{lemma} 
Assume $f$ is general, so that the singular curves in~$K$ are 
irreducible and nodal and the basepoints~$\Gamma_0$ of~$K$ are $8$ points 
disjoint from the exceptional curve $E$. Then the morphism $\psi_C \colon 
\VPSC([f],8) \longrightarrow C$ is an isomophism for every $C\in K$, and every 
$[\Gamma] \in\VPSC([f],8)$ is apolar to $f$.
\end{lemma}
\begin{proof} 
Consider the embedding $C\longrightarrow \pr^{14} \subseteq \pr \left (S_{3E+6F} 
\right)$ defined by the linear system $|(3E+6F)_C|$ of divisors on~$C$, namely 
the linear system of curves $|3E+6F|$ restricted to~$C$. It is the composition 
of the embedding defined by $|4E+6F|$ and the projection from the point $E\cap 
C$. We consider Weil and Cartier divisors on~$C$ (if $C$ is smooth they of 
course coincide). While Weil divisors may have multiplicity one at a node~$p_C$ 
of~$C$, any effective Cartier divisor has multiplicity at least two at~$p_C$. 
Any Weil divisor~$\Gamma$ of degree~$8$ on~$C$ is contained in a unique Cartier 
divisor~$\bar\Gamma$ of degree~$9$ defined on~$C$ by the pencil~$D_\Gamma$ of 
curves in $|3E+3F|$ that contain~$\Gamma$. The uniqueness of~$D_\Gamma$ implies 
both that the map $\psi_C \colon [\Gamma] \mapsto p_\Gamma = \bar\Gamma -\Gamma$ 
is well-defined, and that it is injective as soon as there is a unique divisor 
in the linear system in $|(3E+3F)_C-p_\Gamma|$ that is contained 
in~$\VPSC([f],8)$.
Any curve~$G_\Gamma$ in $|4E+6F|$ that is not a multiple of~$C$ and contains the 
Cartier divisor~$\bar\Gamma$, defines on~$C$ a Weil divisor 
$\Gamma'=G_\Gamma\cap C-\Gamma$ of degree~$8$ that contains the 
point~$p_\Gamma$. Thus $\Gamma+\Gamma'$ is a hyperplane section of $C \subseteq 
\pr^{15} \subseteq \pr(S_{4E+6F})$ and we can define a pair of linear systems
\[
  L_\Gamma:=|(4E+6F)_C-\Gamma'|\quad {\rm and}\quad 
L_{\Gamma'}:=|(4E+6F)_C-\Gamma|
\]
like in Lemma~\ref{ell} above for smooth elliptic curves. Since 
\[\Gamma+\Gamma'\equiv (4E+6F)_C \quad \text{ and }\quad \Gamma+p_\Gamma\equiv(3E+3F)_C,\]
 we get 
$\Gamma'-p_\Gamma\equiv(E+3F)_C$, i.e.\ $\Gamma'\equiv(E+3F)_C+p_\Gamma$. Now, 
$|(E+3F)_C+p|=|(E+3F)_C+p'|$ if and only if $|(F)_C+p|=|(F)_C+p'|$, which in 
turn is equivalent to $p=p'$. Therefore the linear system~$L_\Gamma$ is uniquely 
defined by~$p_\Gamma$.

A general point in~$\pr^{15}$ lies in the span of a unique divisor in each of 
these linear systems of degree~$8$. So, after projection from the point $E \cap 
C$, the subschemes~$\Gamma$ of length~$8$ 
on $C$ whose span contains a general point $[f] \in \pr^{14}$ in the span of $C \subseteq 
\pr^{14} \subseteq \pr(S_{3E+6F})$ are in one to one correspondence with linear 
systems~$L_\Gamma$, and hence with the points~$p_\Gamma$ on~$C$. And the 
correspondence coincides with the map $\psi_C \colon \VPSC([f],8) 
\longrightarrow C$ above.
   
Every apolar smooth scheme $[\Gamma] \in \VPSC([f],8)$ spans a~$\pr^7$ under the 
embedding by~$|(3E+6F)_{C}|$. If $[\tilde{\Gamma}] \in \VPSC([f],8)$ is a limit 
point, then there is a~$\pr^7$ containing both~$\tilde{\Gamma}$ and~$[f]$. If 
$\tilde{\Gamma}$ spans such~$\pr^7$, then $\tilde{\Gamma}$ is apolar to~$f$. 
Otherwise $\tilde{\Gamma}$ spans at most a~$\pr^6$, implying that $C$ 
contains a scheme of length~$8$ spanning at most a~$\pr^6$. This is impossible: 
for any $6$ points $P_1, \dotsc, P_6$ on~$C$, the subscheme  $\Delta = 
\tilde{\Gamma} \cup \{ P_1, \dotsc, P_6\}$ would span a~$\pr^{12}$, hence each 
hyperplane through~$\Delta$ would meet~$C$ in another point 
outside~$\tilde{\Gamma}$, so $C$ would be rational, while it is elliptic.
\end{proof}

Every $[\Gamma]\in \VPSS([f],8)$ belongs to $\VPSC([f],8)$ for some $C\in K$, 
so in particular, every $[\Gamma]\in \VPSS([f],8)$ is apolar to~$f$.
Consider therefore the open subset ${\mathcal U'} \subseteq {\mathcal U} 
\subseteq \Hilb_8 (\Sigma)$ of the smooth open set ${\mathcal U}$ above 
consisting of schemes~$\Gamma$, such that $\dim I_{\Gamma,3E+6F}=18$, or 
equivalently, such that $\nu_{3E+6F}(\Gamma)$ spans a~$\pr^7$. Let $E_{\mathcal 
U}$ be the vector bundle of rank~$18$ over~${\mathcal U}$ whose fiber 
over~$[\Gamma]$ is the dual of the space of sections in degree $3E+6F$ of the 
ideal $I_{\Gamma, 3E+6F} \subseteq T_{3E+6F}$. The linear form 
$\phi_{f,3E+6F} \colon T_{3E+6F} \longrightarrow \C$ defines a section on 
$E_{\mathcal U}$. If $\VPSS([f],8) \subseteq {\mathcal U'}$, then 
$\VPSS([f],8)$ is the $0$-locus of this section, since any~$[\Gamma]$ in 
$\VPSS([f],8)$ is apolar to~$f$.
\begin{lemma} 
If $f\in S_{3E+6F}$ is general, then $\VPSS([f],8) \subseteq {\mathcal U'}$.
\end{lemma}
\begin{proof}
It suffices to show that for any $[\Gamma]\in \VPSS([f],8)$ the image 
$\nu_{3E+6F}(\Gamma)$ spans a~$\pr^7$. But this follows from the 
fact that $\Gamma \subseteq C$ for some irreducible curve~$C$ in~$K$, and any 
subscheme of length~$8$ on the curve $\nu_{3E+6F}(C)$ spans a~$\pr^7$.
\end{proof}

Taking all $\phi_{f,3E+6F}$ for $f\in S_{3E+6F}$ gives a linear space of 
sections of~$E_{\mathcal U}$ without basepoints on ${\mathcal U}$, 
so for a general $f$ the $0$-locus $\VPSS([f],8)$ is smooth.

Now, every point outside~$\Gamma_0$ lies in a unique curve $C\in K$, so 
$\psi_f$ 
is a birational morphism from a smooth surface and has an inverse that is 
defined outside~$\pi(\Gamma_0)$. Let $\pi' \colon \Sigma'\longrightarrow \pr^2$ 
be the blowup along~$\pi(\Gamma_0)$. Since, by assumption, all $C\in K$ are 
smooth at~$\Gamma_0$, the inverse map to~$\psi_f$ lifts to a morphism $\psi_f' 
\colon \Sigma'\longrightarrow \VPSS([f],8)$ that restricts to the inverse 
of~$\psi_C$ on the strict transform of~$\pi(C)$ on~$\Sigma'$. Therefore 
$\psi_f'$ is an inverse of~$\psi_f$, and hence an isomorphism.
\end{proof}

\def\cprime{$'$}

\section*{Acknowledgments}
 N.\ Villamizar acknowledges the support of RICAM, Linz, where she developed 
part of the research contained in this paper. M. Gallet would like to thank 
Josef Schicho and Hamid Ahmadinezhad for helpful comments, especially about the 
introduction. M.\ Gallet is supported by Austrian Science Fund (FWF): 
W1214-N15, Project DK9 and (FWF): P26607 and (FWF): P25652. K.\ Ranestad 
acknowledges funding from the Research Council of Norway (RNC grant 239015). 

\begin{bibdiv}
\begin{biblist}

\bib{Beauville}{book}{
   author={Beauville, A.},
   title={Complex algebraic surfaces},
   series={London Mathematical Society Student Texts},
   volume={34},
   edition={2},
   note={Translated from the 1978 French original by R. Barlow, with
   assistance from N. I. Shepherd-Barron and M. Reid},
   publisher={Cambridge University Press, Cambridge},
   date={1996},
}

\bib{CGG2005a}{incollection}{
      author={Catalisano, M.},
      author={Geramita, A.},
      author={Gimigliano, A.},
       title={Higher secant varieties of {S}egre-{V}eronese varieties},
        date={2005},
   booktitle={Projective varieties with unexpected properties},
   publisher={Walter de Gruyter, Berlin},
       pages={81\ndash 107},
}

\bib{Cox}{article}{
      author={Cox, D.},
       title={The homogeneous coordinate ring of a toric variety},
        date={1995},
     journal={J. Algebraic Geom.},
      volume={4},
      number={1},
       pages={17\ndash 50},
}

\bib{CLS}{book}{
   author={Cox, D.},
   author={Little, J.},
   author={Schenck, H.},
   title={Toric varieties},
   series={Graduate Studies in Mathematics},
   volume={124},
   publisher={American Mathematical Society, Providence, RI},
   date={2011},
}

\bib{Dolgachev2004}{article}{
      author={Dolgachev, I.},
       title={Dual homogeneous forms and varieties of power sums},
        date={2004},
        ISSN={1424-9286},
     journal={Milan J. Math.},
      volume={72},
       pages={163\ndash 187},
}

\bib{Fisher}{article}{
      author={Fisher, T.},
       title={Pfaffian presentations of elliptic normal curves},
        date={2010},
     journal={Trans. Amer. Math. Soc.},
      volume={362},
      number={5},
       pages={2525\ndash 2540},
}

\bib{Fogarty1968}{article}{
      author={Fogarty, J.},
       title={Algebraic families on an algebraic surface},
        date={1968},
     journal={Amer. J. Math.},
      volume={90},
       pages={511\ndash 521},
}

\bib{Friedman}{book}{
   author={Friedman, R.},
   title={Algebraic surfaces and holomorphic vector bundles},
   series={Universitext},
   publisher={Springer-Verlag, New York},
   date={1998},
}
	
\bib{fulton1998}{book}{
      author={Fulton, W.},
       title={Intersection theory},
     edition={Second Edition},
      series={Results in Mathematics and Related Areas. 3rd Series. A Series of
  Modern Surveys in Mathematics},
   publisher={Springer-Verlag, Berlin},
        date={1998},
      volume={2},
}

\bib{Galazka}{article}{
      author={Ga\l\c{a}zca, M.},
       title={Multigraded Apolarity},
        date={2016},
     journal={\href{http://arxiv.org/abs/1601.06211v1}{arXiv:1601.06211}},
}

\bib{IK}{book}{
      author={Iarrobino, A.},
      author={Kanev, V.},
       title={Power sums, {G}orenstein algebras, and determinantal loci},
      series={Lecture Notes in Mathematics},
   publisher={Springer-Verlag, Berlin},
        date={1999},
      volume={1721},
        note={Appendix C by Iarrobino and Steven L. Kleiman},
}

\bib{IR}{article}{
      author={Iliev, A.},
      author={Ranestad, K.},
       title={{$K3$} surfaces of genus 8 and varieties of sums of powers of
  cubic fourfolds},
        date={2001},
     journal={Trans. Amer. Math. Soc.},
      volume={353},
      number={4},
       pages={1455\ndash 1468},
}

\bib{Kapranov1993}{incollection}{
      author={Kapranov, M.},
       title={Chow quotients of {G}rassmannians. {I}},
        date={1993},
   booktitle={I. {M}. {G}el\cprime fand {S}eminar},
      series={Adv. Soviet Math.},
      volume={16},
   publisher={Amer. Math. Soc., Providence, RI},
       pages={29\ndash 110},
}

\bib{Landsberg2012}{book}{
      author={Landsberg, J.},
       title={Tensors: geometry and applications},
      series={Graduate Studies in Mathematics},
   publisher={American Mathematical Society, Providence, RI},
        date={2012},
      volume={128},
}

\bib{MM}{article}{
      author={Massarenti, A.},
      author={Mella, M.},
       title={Birational aspects of the geometry of varieties of sums of
  powers},
        date={2013},
     journal={Adv. Math.},
      volume={243},
       pages={187\ndash 202},
}

\bib{Muk92}{incollection}{
      author={Mukai, S.},
       title={Fano {$3$}-folds},
        date={1992},
   booktitle={Complex projective geometry ({T}rieste, 1989/{B}ergen, 1989)},
      series={London Math. Soc. Lecture Note Ser.},
      volume={179},
   publisher={Cambridge Univ. Press, Cambridge},
       pages={255\ndash 263},
}

\bib{Oeding2013}{article}{
      author={Oeding, L.},
      author={Ottaviani, G.},
       title={Eigenvectors of tensors and algorithms for {W}aring
  decomposition},
        date={2013},
     journal={J. Symbolic Comput.},
      volume={54},
       pages={9\ndash 35},
         url={http://dx.doi.org/10.1016/j.jsc.2012.11.005},
}

\bib{RS1}{article}{
      author={Ranestad, K.},
      author={Schreyer, F.},
       title={Varieties of sums of powers},
        date={2000},
     journal={J. Reine Angew. Math.},
      volume={525},
       pages={147\ndash 181},
         url={http://dx.doi.org/10.1515/crll.2000.064},
}

\bib{RS2}{article}{
      author={Ranestad, K.},
      author={Schreyer, F.},
       title={The variety of polar simplices},
        date={2013},
     journal={Doc. Math.},
      volume={18},
       pages={469\ndash 505},
}

\bib{RV}{article}{
      author={Ranestad, K.},
      author={Voisin, C.},
       title={Variety of power sums and divisors in the moduli space of cubic
  fourfolds},
        date={2017},
        journal={Doc. Math.},
        volume={22},
        pages={455\ndash 504},
}

\bib{Room}{book}{
      author={Room, T.},
       title={The geometry of determinantal loci},
   publisher={Cambridge University Press},
        date={1938},
}

\bib{Sylvester1904a}{article}{
      author={Sylvester, J.},
       title={An essay on canonical forms, supplemented by a sketch of a memoir
  on elimination, transformation and canonical forms},
        date={1904},
     journal={Collected Works},
      volume={I},
       pages={203\ndash 216},
        note={Cambridge University Press},
}

\bib{Sylvester1904b}{article}{
      author={Sylvester, J.},
       title={Sketch of a memoir on elimination, transformation, and canonical
  forms},
        date={1904},
     journal={Collected Works},
      volume={I},
       pages={184\ndash 197},
        note={Cambridge University Press},
}

\end{biblist}
\end{bibdiv}

\end{document}